\title{Global regularity for 2D Muskat equations with finite slope}
\author{Peter Constantin}
\address{Princeton University, Princeton, NJ 08544}
\email{const@math.princeton.edu}
\author{Francisco Gancedo}
\address{University of Seville, Sevilla, Spain 41012}
\email{fgancedo@us.es}
\author{Roman Shvydkoy}
\address{University of Illinois at Chicago, Chicago, IL, 60607}
\email{shvydkoy@uic.edu}
\author{Vlad Vicol}
\address{Princeton University, Princeton, NJ 08544}
\email{vvicol@math.princeton.edu}
\theoremstyle{plain}
\newtheorem{theorem}{Theorem}[section]
\newtheorem{lemma}[theorem]{Lemma}
\theoremstyle{definition}
\numberwithin{equation}{section}
\def\RR{{\mathbb R}}
\def\DD{\mathcal D}
\def\RSZ{{\mathcal R}}
\def\LL{{\mathcal L}}
\newcommand{\eps}{\varepsilon}
\renewcommand{\phi}{\varphi}
\newcommand{\MOC}{modulus of continuity\;}
\newcommand{\al}{\alpha}
\begin{document}

%%%%%%%%%%%%%%%%%%%%%%%%%% THE ABSTRACT %%%%%%%%%%%%%%%%%%%%%%%%
\begin{abstract}
We consider the 2D Muskat equation for the interface between two constant density fluids in an incompressible porous medium, with velocity given by Darcy's law. We establish that as long as the slope of the interface between the two fluids remains bounded and uniformly continuous, the solution remains regular. The proofs exploit the nonlocal nonlinear parabolic nature of the equations through a series of nonlinear lower bounds for nonlocal operators. These are  used to deduce that as long as the slope of the interface remains uniformly bounded, the curvature remains bounded.  The nonlinear bounds  then allow us to obtain local existence for arbitrarily large initial data in the class $W^{2,p}$, $1<p\leq \infty$. We provide furthermore a global regularity result for small initial data: if the initial slope of the interface is sufficiently small, there exists a unique solution for all time.
\hfill \today
\end{abstract}

%%%%%%%%%%%%%%%%%%%%%%%% Classification and Keywords %%%%%%%%%%%%%%%%%%%%

\subjclass[2010]{76S05,35Q35}
\keywords{porous medium, Darcy's law, Muskat problem, maximum principle}

\maketitle

\setcounter{tocdepth}{1}
\tableofcontents

\section{Introduction}\label{sec:intro}
In this paper we study solutions of the one dimensional nonlinear nonlocal equation
\begin{align}\label{eq:intro}
\partial_t f(x,t) = \int_{\RR} \frac{ \delta_\alpha f'(x,t)\alpha}{ (\delta_\alpha f(x,t))^2 + \alpha^2}  d\alpha,
\end{align}
where we have denoted by
\begin{align*}
\delta_\alpha f(x,t) &= f(x,t) - f(x-\alpha,t)
\end{align*}
the $\alpha$-finite difference of $f$ at fixed time $t \geq 0$, and by
\begin{align*}
f'(x,t)&=\partial_x f(x,t)
\end{align*}
the partial derivative of $f$ with respect to the position $x\in\RR$.

\subsection{Physical origins of the problem} Equation~\eqref{eq:intro} is derived from a two dimensional problem, where $y = f(x,t)$ describes a graph (in coordinates whereby gravity acts in the $y$ direction) which represents the interface separating two immiscible fluids of different constant densities. The flow is incompressible, and the fluids permeate a porous medium.  The 2D Darcy's law  \cite{Darcy56}
$$
\frac{\mu}{k}u(x,y,t)=-\nabla p(x,y,t)-(0,g\rho(x,y,t))
$$
is used to model the balance of gravity, internal pressure forces and velocity (replacing acceleration). Here $u$ and $p$ are the velocity and pressure of the fluid, which are functions of time $t\geq 0$ and position $(x,y)\in\RR^2$. The physical constants of the model are viscosity $\mu$, permeability $k$, and gravity $g$. For simplicity we fix these physical parameters to equal $1$. We pay special attention to the density function $\rho$, which is given by a step function representing the jump in the fluids densities. Taking the size of this jump equal to $2\pi$ (denser fluid below), we arrive at \eqref{eq:intro} (see~\cite{CordobaGancedo07} for a detailed derivation). This is the classical Muskat problem~\cite{Muskat34}, which has been broadly studied \cite{Bear72} due to its physical relevance, wide applicability, and mathematical analogy to a completely different phenomenon: the interface dynamics of fluids inside Hele-Shaw cells \cite{SaffmanTaylor58}.

The Muskat problem has been widely studied, its multiple features being taken under consideration. It can incorporate capillary forces that deal with surface tension effects~\cite{ConstantinPugh93,Chen93,KnupferMasmoudi15}, density jumps~\cite{CordobaGancedo07}, viscosity~\cite{SiegelCaflischHowinson04}, and permeability discontinuities~\cite{BerselliCordobaGranero14}. The dynamics is different for the one fluid case (with a dry region)~\cite{CastroCordobaFeffermanGancedo13}, two fluids~\cite{SiegelCaflischHowinson04}, and multi-phase flows~\cite{CordobaGancedo10}. Moreover, one may consider boundary effects~\cite{Granero14} and study the interface in the three dimensional case~\cite{Ambrose07,CordobaCordobaGancedo13}.

\subsection{Summary of known results}
The basic mathematical questions regarding \eqref{eq:intro} are the existence and regularity of solutions. The Muskat problem can be ill-posed, due to existence of unstable configurations~\cite{Otto99,Szekelyhidi12}. This feature can be understood in the contour dynamics description via the Rayleigh-Taylor condition, which involves the geometry of the density jump or the dynamics of fluids with different viscosities. Essentially, the unstable situation occurs when a denser fluid is above a less dense fluid, or when a less viscous fluid pulls a more viscous one \cite{Ambrose07}. Surface tension effects regularizes the equation~\cite{EscherSimonett97,Ambrose13}, so that the system is well-posed without satisfying the Rayleigh-Taylor condition, but there are still instabilities in this case   \cite{GuoHallstromSpirn07,EscherMatioc11}. In the stable cases it is possible to find global-in-time solutions for small initial data~\cite{SiegelCaflischHowinson04,CordobaGancedo07,ChengGraneroBelinchonShkoller14} due to the parabolicity of the equation. This smallness can be measured in different regularity classes and sizes
\cite{ConstantinCordobaGancedoStrain13,ConstantinCordobaGancedoRodriguezStrain13,Granero14,BeckSosoeWong14,ChengGraneroBelinchonShkoller14}. Low regular global existence results are obtained for Lipschitz weak solutions in the case of small slopes in \cite{ConstantinCordobaGancedoStrain13,ConstantinCordobaGancedoRodriguezStrain13}.

A very interesting phenomenon for the Muskat problem is the development of finite time singularities starting from stable initial data. One possible singularity formation happens by the breakdown of the Rayleigh-Taylor condition  in finite time \cite{CastroCordobaFeffermanGancedoLopez12}. In this case the interface cannot be parameterized by a graph of a function any longer, i.e. there exists a time $t_1>0$ when  solutions of equation \eqref{eq:intro} satisfy
$$
\lim_{t\to t_1}\|f'(t)\|_{L^\infty}=\infty.
$$
After that time the free boundary evolves in a way that produces a region with denser fluid above less dense fluid. For short time the interface is still regular (although it is not a graph) due to the fact that the parabolicity of the system gives instant analyticity. This sequence of events is termed ``wave-turning'' because it is a blow up of the graph-parameterization. Wave-turning has been proven to arise in more general cases with different geometries \cite{BerselliCordobaGranero14,GomezSerranoGraneroBelinchon14}. The regularity of the interface is lost at some time $t_2>t_1$ after wave-turning, and the interface ceases to belong to $C^4$. Thus, starting from a stable regime, the Muskat solution enters an unstable situation, and then the regularity breakdown occurs~\cite{CastroCordobaFeffermanGancedo13}. This phenomenon is far from trivial, as some unstable solutions can become stable, and again reach unstable regime \cite{CordobaGomezSerranoZlatos15}.

A different type of singularity which may develop in solutions of the Muskat problem is given by the occurrence of a finite time self-intersection of the free boundary. If the collapse is at a point, while the free boundary remains regular, then the singularity is termed a ``splash''. If the collapse is along a curve, while the free boundary remains regular, then it is termed a ``splat'' (or ``squirt singularity''). Both kinds of collisions have been ruled out in the case of a stable density jump in \cite{CordobaGancedo10,GancedoStrain13}. In the one fluid case, there exists a finite time splash blow up \cite{CastroCordobaFeffermanGancedo13Sub} in the stable regime, but the splat singularity is not possible \cite{CordobaPernasCastano14}.

\subsection{Main results}
The main purpose of this work is to develop conditional regularity results (blow up criteria) for the Muskat problem involving the boundedness of the first derivative of the interface. Our motivation is both to complement and to aide the theory of singularities for the Muskat equation by obtaining sharp regularity results in terms of purely geometric quantities such as the slope.

The bounds of nonlinear terms used to obtain the conditional regularity results also  allow us to obtain several existence results in low regularity regimes. Specifically, we prove local-in-time well-posedness and global well-posedness for Muskat solutions in classes of functions of bounded slope and $L^p$ (with $p \in (1,\infty]$) integrability of curvature.

Recently, results involving solely the second derivative of $f$ appeared in the work~\cite{ChengGraneroBelinchonShkoller14}, where the authors explore the PDE structure of the original Darcy's law in the bulk rather than the corresponding integral equation for contour dynamics \eqref{eq:intro}. They  develop an $H^2$ well-posedness theory for the equation in the case of density and viscosity jumps. Both the local and global existence results are obtained under smallness assumptions for the interface:  $H^{3/2+\epsilon}$ for the local results and $H^{2}$ for the global results. These smallness conditions imply $f' \in C^{\beta}$ for some $\beta>0$, requiring thus smallness of the H\"{o}lder continuity of the slope.

The {\em optimal} well-posedness theory (local, conditional, or global) for the Muskat equation should (conjecturally) involve only assumptions of uniform boundedness of the slope $f'$. To the best of our knowledge, this remains an outstanding open problem. The difficulty in reaching a $W^{1,\infty}$ regularity theory for $f$ may be seen in at least two ways:
\begin{itemize}[leftmargin=*]
\item When considering the evolution of $f'$, linearized around the steady flat solution, the resulting equation has $L^\infty$ as a scaling invariant norm. Therefore, for large slopes, obtaining higher regularity of the solution (required in order to obtain uniqueness) involves solving a large data {\em critical nonlinear nonlocal problem}. Of course, the additional complication is that, due to the fundamentally nonlinear nature of the equation, in the large slope regime the linearization around the flat solution rapidly ceases to be useful, and new severe difficulties arise.
\item The velocity $v$ transporting $f, f'$, and $f''$ (cf.~\eqref{eq:muskat:transport}, \eqref{eq:f'}, and \eqref{eq:f'':1} below) is obtained by computing a Calder\'on commutator applied to the identity (cf.~\eqref{eq:v:def} below). However, when $f'$ is merely bounded, while the Calder\'on commutator maps $L^p\to L^p$ for $p \in (1,\infty)$,  the endpoint $L^\infty$ inequality fails~\cite{MuscaluSchlag13b}. Moreover, since we are in 1D the $x$-derivative of $v$ enters the estimates (we cannot use incompressibility), and this term (cf.~\eqref{eq:T5:def}) yields a Calder\'on commutator applied to  $\delta_\alpha f'(x,t)/ \alpha$. Thus, to bound this requires estimates on the maximal curvature, not slope. This is the reason why up to date the existing  continuation criteria~\cite{CordobaGancedo07,CordobaGraneroBelinchonOrive14} required that the solution lies in $C^{2,\epsilon}$ for some $\epsilon>0$.
\end{itemize}

In this paper we move closer to the conjectured critical $f \in W^{1,\infty}$ well-posedness framework. The main new idea is that the maximal curvature can be controlled by either assuming uniform continuity of the slope $f'$ (cf.~part (i) of Theorem~\ref{thm:cond}), or by assuming that the initial slope $\|f_0'\|_{\infty}$ is sufficiently small (cf.~part (ii) of Theorem~\ref{thm:small}). We moreover show that under either of these conditions one can also control $\|f''\|_{L^p}$ for all $p \in (1,\infty)$. In the regime $p \in (1,2)$ not even the local existence of solutions was previously known.

The important novelty of our large data result is  that the maximal slope (and the maximal initial curvature) are allowed to be {\em arbitrarily large}, as long as the slope obeys any uniform {\em modulus of continuity}.
Our uniform $C^0$ assumption on $f'$ is a local smallness of variation assumption instead of a global smallness assumption of $f'$ in $L^\infty$.  The modulus of continuity of $f'$ is not required to vanish at the origin at any particular rate, and it can be even weaker than a Dini modulus (and thus the Calder\'on commutator in $v$ and the nonlinear terms would not necessarily yield bounded functions).

Regarding our small data result, we draw the attention to the fact that the smallness is assumed only on the initial slope, not on the $W^{2,p}$ norm.

The main tools used in our analysis are various nonlinear maximum principles for the evolution of $f''$, in the spirit of the ones previously developed in~\cite{ConstantinVicol12,ConstantinTarfuleaVicol15} for the critical SQG equation. The robustness of the pointwise and integral lower bounds available for the nonlocal operator $\LL_f$ defined in~\eqref{eq:LL:f:def} below allows us to treat all values of $p \in (1,\infty]$ and enables us to analyze the long time behavior of the curvature, cf.~\eqref{eq:curvature:decay}. The endpoint case $W^{2,1}$, which scales as $W^{1,\infty}$, remains however open.

Next we state our results more precisely. First we establish a low regularity local-existence result for \eqref{eq:intro} with initial datum that has integrable, respectively bounded, second derivatives.

\begin{theorem}[\bf Local-existence in $W^{2,p}$]\label{thm:local}
Assume the initial datum has finite energy and finite slope, that is $f_0\in L^2 \cap W^{1,\infty}$.  Let $p \in (1,\infty]$, and additionally assume that $f_0 \in W^{2,p}$. Then there exists a time $T=T(\|f_0\|_{W^{2,p}\cap W^{1,\infty}} )>0$ and a unique solution $f\in L^\infty(0,T;W^{2,p}) \cap C(0,T;L^2 \cap W^{1,\infty})$ of the initial value problem \eqref{eq:intro} with initial datum $f(x,0)=f_0(x)$.
\end{theorem}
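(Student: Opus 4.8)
The plan is to establish the theorem via an a priori estimate plus an approximation/compactness scheme, following the standard parabolic framework but with the nonlinear lower bounds for $\LL_f$ (expected to appear later in the paper) as the crucial ingredient. First I would work at the regularized level: mollify the initial datum and/or add a small viscous term $\eps \partial_x^2 f$ to \eqref{eq:intro}, obtaining a smooth global solution $f^\eps$ for each $\eps>0$ by standard parabolic theory, and then the whole game is to prove $W^{2,p}\cap W^{1,\infty}$ bounds for $f^\eps$ that are uniform in $\eps$ on a time interval $[0,T]$ with $T$ depending only on $\|f_0\|_{W^{2,p}\cap W^{1,\infty}}$. The key differential inequalities are: (i) a maximum-principle bound $\|f^\eps(t)\|_{L^\infty} \le \|f_0\|_{L^\infty}$ and the energy bound $\|f^\eps(t)\|_{L^2}\le \|f_0\|_{L^2}$, both of which follow by testing the equation and using the sign structure / parabolicity (these should be essentially classical for Muskat); (ii) an estimate for $\|f^\eps_x(t)\|_{L^\infty}$; and (iii) an estimate for $\|f^\eps_{xx}(t)\|_{L^p}$.

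For step (iii) I would differentiate the equation twice, obtaining an evolution equation of the form $\partial_t f'' + \LL_f f'' = (\text{lower order / commutator terms})$, where $\LL_f$ is the nonlocal diffusion operator in \eqref{eq:LL:f:def} associated to the current slope. For $p\in(1,\infty)$ one pairs this with $|f''|^{p-2}f''$ and integrates; the good term is the positive contribution $\int (\LL_f f'')\,|f''|^{p-2}f''\,dx$, which I would bound below using the nonlinear lower bound for $\LL_f$ (this is exactly the "curvature controlled by slope" mechanism advertised in the introduction). The remaining terms — in particular the Calderón-commutator-type term $\mathcal{T}_5$ involving $v_x$ applied to $\delta_\alpha f'/\alpha$ — must be absorbed: one splits each into a part dominated by the good dissipative term (using the lower bound, with a small constant) plus a part controlled polynomially by $\|f''\|_{L^p}$ and $\|f'\|_{L^\infty}$. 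This yields $\tfrac{d}{dt}\|f''\|_{L^p}^p \le C(\|f'\|_{L^\infty})\,\Phi(\|f''\|_{L^p})$, and combined with a matching Grönwall-type bound for $\|f'\|_{L^\infty}$ from step (ii), one closes the estimate on a short time $[0,T]$. The case $p=\infty$ is handled by evaluating the equation for $f''$ at a point of maximum of $|f''|$ and again using the pointwise lower bound for $\LL_f$ to dominate the commutator terms.

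With the uniform bounds in hand, I would pass to the limit $\eps\to 0$: the uniform $L^\infty_t W^{2,p}$ bound plus the equation give equicontinuity in time in a weaker norm (e.g. $C([0,T];C^{1,\gamma})$ for $\gamma<1-1/p$ when $p>1$, or $C^1$ with a modulus when $p=\infty$), so by Arzelà–Ascoli and weak-$*$ compactness in $L^\infty_t W^{2,p}$ a subsequence converges to a limit $f$ solving \eqref{eq:intro}, with $f\in L^\infty(0,T;W^{2,p})\cap C(0,T;L^2\cap W^{1,\infty})$. Uniqueness is proved by a stability estimate: for two solutions $f_1,f_2$ with the stated regularity, the difference $w=f_1-f_2$ satisfies an equation $\partial_t w + \LL_{f_1} w = (\text{terms linear in } w \text{ with } W^{2,p}\text{ coefficients})$, and testing against $w$ (in $L^2$) while using the nonnegativity of $\LL_{f_1}$ and $L^\infty$-control of the coefficient of the commutator term gives a Grönwall inequality for $\|w\|_{L^2}$; since $w(0)=0$, $w\equiv 0$ on $[0,T]$.

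The main obstacle, and the conceptual heart of the argument, is step (iii): controlling the Calderón-commutator term $\mathcal T_5$ — which a priori wants a bound on curvature to even make sense in $L^\infty$ — by the dissipative term produced by $\LL_f$. This is precisely the difficulty flagged in the introduction (the failure of the endpoint $L^\infty$ Calderón commutator estimate and the appearance of $v_x$ in 1D), and its resolution relies entirely on the nonlinear lower bounds for $\LL_f$ that the paper develops; everything else (maximum principle, energy estimate, approximation, compactness, uniqueness) is comparatively routine modulo careful bookkeeping of the dependence of constants on $\|f'\|_{L^\infty}$.
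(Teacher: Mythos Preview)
Your plan is essentially the paper's own strategy: couple the evolution of $B(t)=\|f'(t)\|_{L^\infty}$ with $M_p(t)=\|f''(t)\|_{L^p}$, use the nonlinear lower bounds on the dissipation $\DD_f[f'']$ to absorb the commutator terms $T_1,\ldots,T_5$, close a polynomial ODE for $B^2+M_p^2$, and pass to the limit in a regularized scheme. Two small points where the paper differs from what you sketched: uniqueness is proved in $L^\infty$ (Theorem~\ref{thm:uniqueness}) rather than $L^2$, using the $C^{(p-1)/p}$ modulus of continuity for $f'$ supplied by Sobolev embedding; and the case $p\in(1,2)$ must be treated separately from $p\ge 2$ because $|r|^{p/2}$ is concave, so the paper introduces the modified dissipation $\DD_f^p$ and an interpolation $M_{p+1}\le M_p^{(p-1)/(p+1)}M_{2p}^{2/(p+1)}$ to close.
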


The main result of this paper is:

\begin{theorem}[\bf Regularity criterion in terms of continuity of slope] \label{thm:cond}
Consider $f_0 \in H^k$ for $k\geq 3$ and assume that $f'$ is bounded on $[0,T]$, i.e. that
\begin{align}
\sup_{t \in [0,T]} \|f'(t)\|_{L^\infty} \leq B < \infty
\label{eq:Lip:cond}
\end{align}
for some $B>0$.
Furthermore, assume that $f'$ is uniformly continuous on $\RR \times [0,T]$. That is, assume there exits a continuous function $\rho \colon [0,\infty) \to [0,\infty)$, that is non-decreasing, bounded, with $\rho(0) = 0$, such that $f'$ obeys the modulus of continuity $\rho$, i.e. that
\begin{align}
|\delta_\alpha f'(x,t)| \leq \rho(|\alpha|)
\label{eq:f':MOC:DEF}
\end{align}
for any $x,\alpha \in \RR$ and $t \in [0,T]$.
Then the following conclusions hold:
\begin{itemize}
\item[(i)] $\sup_{t\in [0,T] } \|f''(t) \|_{L^\infty}  < C(\|f_0''\|_{L^\infty},B, \rho)$.
\item[(ii)] The solution stays regular on $[0,T]$ and $f\in C([0,T];H^k)\cap L^2(0,T;H^{k+\frac12})$.
\end{itemize}
\end{theorem}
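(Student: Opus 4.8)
The plan is to prove part (i) by a nonlinear maximum principle for the curvature $g:=f''$, and then to obtain part (ii) from the standard $L^2$-energy method, which becomes available once $\|f''\|_{L^\infty}$ has been controlled. First I would record, as in the derivation of \eqref{eq:f'}--\eqref{eq:f'':1}, that $g=f''$ solves a nonlocal transport--diffusion equation
$$\partial_t g + v\,\partial_x g + \LL_f g = \mathcal{N}[f],$$
where $v$ is the Calder\'on-commutator velocity of \eqref{eq:v:def}, $\LL_f$ is the nonnegative order-one nonlocal operator of \eqref{eq:LL:f:def} whose kernel is comparable to $|\alpha|^{-2}$ (the comparison constants depending only on $B$ through $|\delta_\alpha f(x,t)|\le B|\alpha|$, implied by \eqref{eq:Lip:cond}), and $\mathcal{N}[f]$ collects the remaining commutator terms. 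The one structural feature of $\mathcal{N}[f]$ I will use is that each of its terms carries, under the integral, the scale-$\alpha$ curvature factor $\delta_\alpha f'(x,t)/\alpha$ together with a factor of the form $g(x)$ or $\delta_\alpha g(x)$ and a kernel bounded by $C|\alpha|^{-1}$; by \eqref{eq:Lip:cond}--\eqref{eq:f':MOC:DEF} one has $|\delta_\alpha f'(x,t)/\alpha|\le\min\{\|f''(t)\|_{L^\infty},\ \rho(|\alpha|)/|\alpha|\}$.

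For part (i), set $M(t):=\|g(t)\|_{L^\infty}$. Since $f_0\in H^k$ with $k\ge 3$, the solution and its first few derivatives are continuous and decaying, so $M$ is Lipschitz in time and $M(t)=\pm g(\bar x(t),t)$ for some $\bar x(t)$; assume $g(\bar x,t)=M(t)$ (otherwise replace $g$ by $-g$). At $\bar x$ one has $\partial_x g(\bar x)=f'''(\bar x)=0$, so the transport term drops and $\frac{d}{dt}M\le -\LL_f g(\bar x)+|\mathcal{N}[f](\bar x,t)|$. The first ingredient is a nonlinear lower bound, which I would isolate as a lemma in the spirit of \cite{ConstantinVicol12,ConstantinTarfuleaVicol15}: since $g(\bar x)-g(\bar x-\alpha)\ge0$ for all $\alpha$, truncating the integral defining $\LL_f g(\bar x)$ at scale $r$ gives $\LL_f g(\bar x)\ge \tfrac{c}{1+B^2}\bigl(\tfrac{2M}{r}-\int_{|\alpha|\ge r}g(\bar x-\alpha)\,|\alpha|^{-2}\,d\alpha\bigr)$, and one integration by parts in $\alpha$ bounds the tail by $CB/r^{2}$ using only $\|f'\|_{L^\infty}\le B$; optimizing in $r$ yields the superlinear, curvature-norm-free bound $\LL_f g(\bar x)\ge c_B\,M^{2}$ whenever $M\ge M_0(B)$. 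The second ingredient is a scale decomposition of $\mathcal{N}[f](\bar x,\cdot)$ at a scale $r_0=r_0(B,\rho)$: on $|\alpha|\le r_0$ bound $|\delta_\alpha f'(\bar x)|\le \rho(r_0)$ and recognize the remainder as bounded by a fixed multiple of the nonnegative integrand of $\LL_f g(\bar x)$, so this piece is $\le C(B)\rho(r_0)\,\LL_f g(\bar x)$; on $|\alpha|>r_0$ use $|\delta_\alpha f'(\bar x)/\alpha|\le 2B$ together with the $g(\bar x)$- or $\delta_\alpha g(\bar x)$-structure (the latter handled exactly like the dissipation tail) to obtain a bound $\le C(B,\rho)(1+M)$ that is only linear in $M$. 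Now fix $r_0$ so small that $C(B)\rho(r_0)\le \tfrac12$ --- possible precisely because $\rho(0)=0$, and this is the unique place the modulus of continuity enters, a purely local smallness requirement imposing no restriction on the possibly enormous $B$ and $\|f_0''\|_{L^\infty}$. Absorbing the near-diagonal part of $\mathcal{N}$ into $\tfrac12\LL_f g(\bar x)$ leaves, for $M\ge M_0(B)$,
$$\frac{d}{dt}M(t)\le -\frac12\LL_f g(\bar x,t)+C(B,\rho)\bigl(1+M(t)\bigr)\le -\frac{c_B}{2}M(t)^{2}+C(B,\rho)\bigl(1+M(t)\bigr)\le -\frac{c_B}{4}M(t)^{2}+C'(B,\rho),$$
so $M(t)\le \max\{M(0),\,M_0(B),\,C''(B,\rho)\}$ on $[0,T]$, which is (i); the same inequality shows $M(t)$ relaxes to the $B,\rho$-dependent bound on a time scale $\sim 1/M(0)$ independent of $\|f_0''\|_{L^\infty}$, i.e.\ the curvature decay \eqref{eq:curvature:decay}.

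For part (ii), with $\|f'\|_{L^\infty}\le B$ and now $\|f''\|_{L^\infty}$ bounded on $[0,T]$, the classical energy method applies: testing the $\partial_x^{k}$-differentiated equation against $\partial_x^{k} f$, the operator $\LL_f$ contributes a coercive term $\gtrsim_B \|f(t)\|_{\dot H^{k+1/2}}^{2}$ (from $\langle \LL_f h,h\rangle\gtrsim_B\|h\|_{\dot H^{1/2}}^2$), while every nonlinear and commutator term is controlled --- via $L^p$-boundedness of Calder\'on commutators and Moser-type product and commutator inequalities --- by $C(B,\|f''\|_{L^\infty})\|f\|_{H^{k}}^{2}$ plus lower-order terms. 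Gronwall then gives $f\in L^\infty(0,T;H^{k})$, time-integration of the coercive term gives $f\in L^2(0,T;H^{k+1/2})$, and $f\in C([0,T];H^{k})$ follows by a Bona--Smith approximation argument.

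The main obstacle is entirely in the maximum-principle step. Because $B$ and $\|f_0''\|_{L^\infty}$ are allowed to be arbitrarily large, the only available smallness is that of $\rho$ at small scales, and one must establish two compatible facts: that the scale-$\le r_0$ part of $\mathcal{N}$ is genuinely dominated by the dissipation density --- this is essential since for a non-Dini $\rho$ the individual terms of $\mathcal{N}$ and the velocity $v$ need not be bounded functions, so they cannot be estimated in $L^\infty$ and their singular parts must be paired with $\LL_f g$ before estimating --- and that the scale-$>r_0$ part is at most linear in $M$, so that it is beaten by the superlinear, curvature-free lower bound $\LL_f g(\bar x)\gtrsim_B M^{2}$. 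Reconciling the precise form of $\LL_f$, the kernel comparisons, the tail integrations by parts, and this superlinear lower bound --- all with explicitly tracked constants --- is the technical heart of the proof.
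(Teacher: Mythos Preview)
Your overall architecture for part (i) — a pointwise maximum-principle argument for $g=f''$, with a superlinear lower bound on the dissipation competing against the nonlinearity — is right, and the part (ii) sketch is adequate. But the core balance you describe does not close, and the place it fails is precisely the step you flag as the ``main obstacle''.

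Your structural description of $\mathcal N[f]$ is not accurate for all four terms. Two of them (the paper's $T_2$ and $T_4$) carry a factor $(\delta_\alpha f')^2$ rather than a single $\delta_\alpha f'/\alpha$ times a $g(x)$-- or $\delta_\alpha g(x)$--factor. Concretely, take
\[
T_2(x)=2\int_{\RR}\frac{(\Delta_\alpha f - f')\,(\Delta_\alpha f')^2}{((\Delta_\alpha f)^2+1)^2\,\alpha}\,d\alpha .
\]
At $\bar x$, writing $\Delta_\alpha f-f'=-\tfrac{\alpha}{2}M+\RSZ_2[g]$ and $|\Delta_\alpha f'|\le M$, the leading near-field contribution from $-\tfrac{\alpha}{2}M$ is
\[
\Bigl|\,M\int_{|\alpha|\le r_0}\frac{(\Delta_\alpha f')^2}{((\Delta_\alpha f)^2+1)^2}\,d\alpha\,\Bigr|\ \le\ C\,r_0\,M^3.
\]
Since $r_0=r_0(B,\rho)$ is fixed independently of $M$, this is cubic in $M$, and it is \emph{not} pointwise dominated by the integrand of $\LL_f g(\bar x)$ (which is $\sim (M-g(\bar x-\alpha))/\alpha^2$). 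Your only lower bound on the dissipation, $\LL_f g(\bar x)\ge c_B M^2$, is quadratic; it cannot absorb a cubic term with an $M$-independent prefactor. If instead you extract $|\RSZ_1[f']|\le\rho(|\alpha|)$ from $T_2$, you are left with $\int_{|\alpha|\le r_0}\rho(|\alpha|)(\Delta_\alpha f')^2|\alpha|^{-1}d\alpha$, which need not be finite for a non-Dini modulus — exactly the regime the theorem is meant to cover. The same issue arises for $T_4$.

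The paper resolves this by reversing the roles you assign to $\rho$ and to the dissipation. The nonlinear terms are estimated using \emph{only} the Lipschitz bound $B$ and an $M$-dependent cutoff $\eta=\eps B/|f''(\bar x)|$; via the Cauchy--Schwarz estimates $|\RSZ_1[f'']|\le C|\alpha|^{1/2}\DD[f'']^{1/2}$ and $|\RSZ_2[f'']|\le C|\alpha|^{3/2}\DD[f'']^{1/2}$ one obtains
\[
2|f''(\bar x)|\,|T_1+\cdots+T_4|\ \le\ \frac{C_0 B}{\eps^2}M^3+C_0\eps B^2\,\DD[f''](\bar x),
\]
and the second term is absorbed into $\DD_f[f'']$ by choosing $\eps=\eps(B)$. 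The modulus $\rho$ is then used \emph{on the dissipation}, improving the Lipschitz-only bound $\DD_f[f''](\bar x)\gtrsim_B M^3$ to $\DD_f[f''](\bar x)\ge L_B(M)$ with $L_B(M)/M^3\to\infty$ as $M\to\infty$. This is what beats the cubic term for arbitrary $B$. In short: your scheme puts $\rho$ on the nonlinearity and keeps the Lipschitz-only dissipation bound, which yields (at best) a $c_B M^2$ versus $r_0 M^3$ imbalance; the paper puts $\rho$ on the dissipation and keeps a Lipschitz-only bound on the nonlinearity, which yields $L_B(M)\gg M^3$ versus $K_B M^3$.

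A minor point: the curvature decay you mention at the end of your part (i) argument is the content of the small-slope theorem (what the paper calls \eqref{eq:curvature:decay}), not of this regularity criterion; here $B$ can be enormous and no decay is claimed.
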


We note that in Theorem~\ref{thm:cond} we do not require the modulus of continuity $\rho$ to vanish at $0$ at any specific rate. For example, it can be guaranteed by a condition $f' \in L^\infty(0,T;C^\beta)$ for $\beta >0$. The remarkable feature of conclusion (i), however, is that the control on $f''$ is furnished by the initial data only. If however the initial slope $f'$ is small enough, then we can show that the condition $f''\in L^p$ can be propagated without any further assumptions on the continuity of the slope. And that way we can obtain a global existence in the corresponding class.

\begin{theorem}[\bf Small slope implies global existence] \label{thm:small}
Consider initial datum $f_0 \in L^2$, with maximal slope that obeys
\begin{align}
\|f'_0\|_{L^\infty} \leq B \leq \frac{1}{C_*},
\label{eq:small:Lip:cond}
\end{align}
for a sufficiently large universal constant $C_* > 1$. Let $p\in (1,\infty]$.
If we additionally have $f_0'' \in L^{p}$, then the local in time solution obtained in Theorem~\ref{thm:local} is in fact global, and we have
\begin{align*}
\|f''(t)\|_{L^p} \leq \|f''_0\|_{L^p}
\end{align*}
for all $t>0$. Furthermore, we have that
\begin{enumerate}
\item for $p=\infty$ the curvature asymptotically vanishes as
\begin{align}
\|f''(t) \|_{L^\infty}\leq \frac{\|f''_0\|_{L^\infty}}{1 +  \frac{\|f''_0\|_{L^\infty}}{100 B} t}
\label{eq:curvature:decay}
\end{align}
for all $t \geq 0$,
\item for
$p \in [2,\infty)$ the solution obeys the $L^p$ energy inequality
\begin{align}
\|f''(t)\|_{L^p}^p + \frac{p^2}{1+B^2} \int_0^t \| |f''(s)|^{p/2} \|_{\dot{H}^{1/2}}^2 ds + \frac{1}{200B(1+B^2)} \int_0^t \|f''(s)\|_{L^{p+1}}^{p+1} ds \leq \|f''(0)\|_{L^p}^p
\label{eq:mpfpp}
\end{align}
for all $t\geq 0$,
\item for $p \in (1,2)$ the solution obeys
\begin{align}
\|f''(t)\|_{L^p}^p + \frac{1}{C B(1+B^2)} \int_0^t \|f''(s)\|_{L^{p+1}}^{p+1} ds \leq \|f''(0)\|_{L^p}^p
\label{eq:mpfpp:<2}
\end{align}
\end{enumerate}
for all $t>0$, for some sufficiently large constant $C>0$ that may depend on $p$.
\end{theorem}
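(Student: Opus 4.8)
The plan is to combine the maximum principle for the slope with a family of nonlinear lower bounds for the nonlocal operator $\LL_f$ acting on the curvature. Since $f'$ satisfies a nonlocal parabolic equation of the schematic form $\partial_t f' + v\, \partial_x f' + \LL_f f' = 0$ (namely \eqref{eq:f'}), the $L^\infty$ norm $\|f'(t)\|_{L^\infty}$ is non-increasing, so the hypothesis $\|f'_0\|_{L^\infty}\le B\le 1/C_*$ is preserved for all $t>0$ and every slope-dependent constant stays controlled by $B$. The core object is the evolution equation \eqref{eq:f'':1} for $g:=f''$, which I would write as $\partial_t g + v\,\partial_x g + \LL_f g = \mathcal N[f]$, where $\LL_f$ is the positive operator \eqref{eq:LL:f:def}, whose kernel is comparable to $(1+B^2)^{-1}|\alpha|^{-2}$ on the set $\{|f'|\le B\}$, and $\mathcal N[f]$ collects the Calder\'on-commutator-type remainders --- the most dangerous being quadratic in $g$ (morally a Calder\'on commutator of $\delta_\alpha g/\alpha$) and carrying a prefactor proportional to the slope. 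Two structural facts drive everything: $\tfrac{1}{\alpha^2+(\delta_\alpha f)^2}\ge \tfrac{1}{(1+B^2)\alpha^2}$, and $|\delta_\alpha f'|\le 2B$ for all $\alpha$.

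For the $L^\infty$ decay \eqref{eq:curvature:decay}, I would evaluate the $g$-equation at a point $x_t$ realizing $\|g(t)\|_{L^\infty}$. The transport term drops, and the key step is a pointwise nonlinear lower bound $(\LL_f g)(x_t)\,\sgn g(x_t)\ge c\,\|g(t)\|_{L^\infty}^2/B$ in the spirit of \cite{ConstantinVicol12}: near the extremum the increments $\delta_\alpha g$ have a favorable sign and are comparable to $\|g\|_{L^\infty}$ on a scale governed by $\|g\|_{L^\infty}^{-1}$, which against the kernel lower bound produces the quadratic gain, the factor $1/B$ entering through the bound $|\delta_\alpha f'|\le 2B$. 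The error $\mathcal N[f](x_t)$ is $\le CB\,\|g(t)\|_{L^\infty}^2(1+\cdots)$ plus strictly lower-order terms, so taking $C_*$ large (hence $B$ small) lets half of it be absorbed into the gain, leaving $\tfrac{d}{dt}\|g(t)\|_{L^\infty}\le -\tfrac{1}{100B}\|g(t)\|_{L^\infty}^2$; integrating gives \eqref{eq:curvature:decay}, and in particular monotonicity of $\|g(t)\|_{L^\infty}$.

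For the $L^p$ inequalities \eqref{eq:mpfpp} and \eqref{eq:mpfpp:<2}, I would test the $g$-equation against $|g|^{p-2}g$ and integrate in $x$. The transport term yields $-\tfrac1p\int v_x\,|g|^p$, which I would expand as a symmetric double integral (again a Calder\'on commutator of $g$ against itself, with a slope prefactor) and dominate by a small multiple of the good terms below. For the dissipation, a weighted C\'ordoba--C\'ordoba inequality gives, when $p\ge 2$, the coercive term $\tfrac{p^2}{1+B^2}\,\||g|^{p/2}\|_{\dot H^{1/2}}^2$, and a nonlinear integral lower bound for $\LL_f$ (exploiting $|\delta_\alpha f'|\le 2B$ and the kernel lower bound) gives in addition the stronger gain $\gtrsim \tfrac{1}{B(1+B^2)}\,\|g\|_{L^{p+1}}^{p+1}$. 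When $1<p<2$ the pointwise inequality behind the fractional-Sobolev piece degenerates, so only the $L^{p+1}$ gain survives, which is exactly why \eqref{eq:mpfpp:<2} has fewer terms. In both ranges every term of $\mathcal N[f]$ tested against $|g|^{p-2}g$, together with $-\tfrac1p\int v_x|g|^p$, is quadratic-or-higher in $g$ with a slope prefactor, hence is absorbed once $B\le 1/C_*$; this produces \eqref{eq:mpfpp}--\eqref{eq:mpfpp:<2} and the monotonicity $\|g(t)\|_{L^p}\le\|g(0)\|_{L^p}$.

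Finally, global existence follows by continuation: Theorem~\ref{thm:local} yields a solution on a time interval whose length depends only on $\|f\|_{W^{2,p}\cap W^{1,\infty}}$, and the estimates above keep this norm bounded by a function of $\|f_0\|_{W^{2,p}\cap W^{1,\infty}}$ alone on any interval of existence, so the solution extends to $[0,\infty)$. The main obstacle I anticipate is proving the nonlinear lower bounds for $\LL_f$ with the sharp $B$-dependence (the $1/B$ in the $L^\infty$ case and the $L^{p+1}$ gain in the $L^p$ case) and carrying out the bookkeeping that each piece of $\mathcal N[f]$ and the term $-\tfrac1p\int v_x|g|^p$ is dominated by a small fraction of those gains: at slope of order $B$ these remainders are precisely critical, so the scheme closes only because $B$ is small, and the range $p\in(1,2)$ must be handled without the $\dot H^{1/2}$ functional framework.
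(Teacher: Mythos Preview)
Your proposal is correct and follows essentially the same route as the paper: the slope maximum principle, the Constantin--Vicol nonlinear lower bound $\DD_f[f''](x)\ge |f''(x)|^3/(24B(1+B^2))$ of Lemma~\ref{lem:lower:Lip}, and absorption of the commutator errors for small $B$, treated separately in the three ranges $p=\infty$, $p\in[2,\infty)$, $p\in(1,2)$.

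One point you gloss over: the errors $T_1,\ldots,T_4$ are not bounded by $CB|f''|^2$ plus ``lower-order'' terms, but rather (Lemma~\ref{lem:T:1:2:3:4:5} with $\eps=1$) by $CB|f''|^2 + CB^2\,\DD[f'']/|f''|$, and the $\DD[f'']$ piece is of the same order as the dissipation --- it must be absorbed into $\DD_f[f'']\ge \DD[f'']/(1+B^2)$ via a second smallness condition on $B$, independent of the absorption of $CB|f''|^3$ into the nonlinear gain $|f''|^3/B$. In the $L^p$ case the term $T_5=\partial_x v$ also produces a principal-value piece $CB|Hf''|$ that the paper controls via the $L^{p+1}$ boundedness of the Hilbert transform; this is implicit in your Calder\'on-commutator description but is where the restriction $p>1$ is actually used.
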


We note that the assumption \eqref{eq:small:Lip:cond} together with the maximum principle for $f'$ established in~\cite[Section 5]{CordobaGancedo09} show that \eqref{eq:small:Lip:cond} holds at all times $t>0$. Now we see that bounds established in (i), (ii), (iii) also prove maximum principles at the level of the curvature. We complement existence result with the following uniqueness statement which in part shows that solutions of Theorems  \ref{thm:local}, \ref{thm:cond}, \ref{thm:small} are unique.

\begin{theorem}[\bf Uniqueness] \label{thm:uniqueness}
Let $f_0 \in L^\infty$, and assume that $f$ is a solution to \eqref{eq:intro} on time interval $[0,T]$ with the following properties: $\sup_{t\in[0,T]}\|f'(t)\|_{L^\infty} <\infty$; $f(x,t) \to 0$ as $|x| \to \infty$, for all $t\leq T$; $\partial_t f(x,t)$ exists for all $(x,t)$ and $\|\partial_t f\|_{L^\infty(\RR \times [0,T])}<\infty$; $f'$ is uniformly continuous on $\RR \times [0,T]$, i.e. \eqref{eq:f':MOC:DEF} holds for some modulus of continuity $\rho$. Then $f$ is the unique solution with $f_{0}(x)=f(x,0)$ satisfying all the listed properties.
\end{theorem}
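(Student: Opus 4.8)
The plan is to compare two solutions $f_1, f_2$ satisfying all the listed hypotheses and sharing the initial datum $f_0$, set $g = f_1 - f_2$, and run a Gronwall argument for $M(t) := \|g(t)\|_{L^\infty}$. From the hypotheses one has $\|f_i(\cdot,t)\|_{L^\infty} \le \|f_0\|_{L^\infty} + T\|\partial_t f_i\|_{L^\infty(\RR\times[0,T])}$, each $f_i'$ is bounded by some $B$ and uniformly continuous with a common modulus $\rho$, so $g(\cdot,t)$ is bounded, Lipschitz in $x$ with continuous derivative, and tends to $0$ at spatial infinity; hence $M(t)$ is attained at a finite point and $M$ is Lipschitz in $t$ because $\partial_t g$ is bounded. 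Since $M(0) = 0$, it suffices to prove $\frac{d}{dt} M(t) \le C M(t)$ for a.e.\ $t$, with $C = C(B, \rho, \|f_0\|_{L^\infty}, \|\partial_t f_i\|_{L^\infty})$; Gronwall then forces $g \equiv 0$.

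Fix such a $t$ and pick $x_t$ with $g(x_t, t) = M(t) > 0$ (otherwise $g(\cdot, t) \equiv 0$, or argue symmetrically at a minimum of $g$). By the standard envelope/Rademacher argument it suffices to estimate $\partial_t g(x_t, t) = v_1(x_t) - v_2(x_t)$, where $v_i$ denotes the right-hand side of \eqref{eq:intro} evaluated at $f_i$. At the maximum point $g'(x_t) = 0$, so $\delta_\alpha g'(x_t) = -g'(x_t - \alpha)$ and $\delta_\alpha g(x_t) = M(t) - g(x_t - \alpha) \ge 0$ for all $\alpha$, while the uniform continuity of $f_1'$ gives $\delta_\alpha f_1(x_t) = \alpha f_1'(x_t) + r(\alpha)$ with $|r(\alpha)| \le |\alpha|\rho(|\alpha|)$. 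Writing $\frac{N_1}{D_1} - \frac{N_2}{D_2} = \frac{N_1 - N_2}{D_1} - \frac{N_2 (D_1 - D_2)}{D_1 D_2}$ with $N_i = \alpha\,\delta_\alpha f_i'(x_t)$, $D_i = \alpha^2 + (\delta_\alpha f_i(x_t))^2$, and using $(\delta_\alpha f_1(x_t))^2 - (\delta_\alpha f_2(x_t))^2 = (\delta_\alpha f_1(x_t) + \delta_\alpha f_2(x_t))\,\delta_\alpha g(x_t)$, one obtains, as principal values,
\begin{align*}
v_1(x_t) - v_2(x_t) = \underbrace{\int_\RR \frac{\alpha\,\delta_\alpha g'(x_t)}{\alpha^2 + (\delta_\alpha f_1(x_t))^2}\, d\alpha}_{=:\,\mathcal A} \;-\; \underbrace{\int_\RR \frac{\alpha\,\delta_\alpha f_2'(x_t)\,\big(\delta_\alpha f_1(x_t) + \delta_\alpha f_2(x_t)\big)\,\delta_\alpha g(x_t)}{\big(\alpha^2 + (\delta_\alpha f_1(x_t))^2\big)\big(\alpha^2 + (\delta_\alpha f_2(x_t))^2\big)}\, d\alpha}_{=:\,\mathcal B}.
\end{align*}

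The term $\mathcal A$ is dissipative: since $\delta_\alpha g'(x_t) = -\partial_\alpha[\delta_\alpha g(x_t)]$, integrating by parts in $\alpha$ — the boundary terms at $\pm\eps$ cancel in the limit by the uniform continuity of the slopes and the Lipschitz bound on $g$, those at $\pm\infty$ vanish since $g$ is bounded — turns $\mathcal A$ into $\int_\RR \delta_\alpha g(x_t)\,\partial_\alpha\!\big[\alpha/(\alpha^2 + (\delta_\alpha f_1(x_t))^2)\big]\, d\alpha$. Using the expansion of $\delta_\alpha f_1(x_t)$ above, on $|\alpha| < \delta$ this kernel equals $-\big(1 + f_1'(x_t)^2 + O(\rho(|\alpha|))\big)\,\alpha^2\big(\alpha^2 + (\delta_\alpha f_1(x_t))^2\big)^{-2}$, which for $\delta = \delta(B,\rho)$ small is negative and $\le -\tfrac12(1+B^2)^{-2}\alpha^{-2}$; since $\delta_\alpha g(x_t) \ge 0$ this yields $\mathcal A \le -Q(x_t) + C(B)\delta^{-1} M(t)$, where $Q(x_t) := \tfrac12(1+B^2)^{-2}\int_{|\alpha| < \delta} \delta_\alpha g(x_t)\,\alpha^{-2}\, d\alpha \ge 0$ and the error comes from $|\alpha| > \delta$ (where $|\delta_\alpha g(x_t)| \le 2M(t)$). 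For $\mathcal B$: on $|\alpha| < \delta$ one bounds $|\delta_\alpha f_2'(x_t)| \le \rho(|\alpha|) \le \rho(\delta)$, $|\delta_\alpha f_1(x_t) + \delta_\alpha f_2(x_t)| \le 2B|\alpha|$, and both denominators below by $\alpha^2$, so the $|\alpha| < \delta$ part of $\mathcal B$ is, in absolute value, at most $2B\rho(\delta)\int_{|\alpha| < \delta}\delta_\alpha g(x_t)\,\alpha^{-2}\,d\alpha$; shrinking $\delta = \delta(B,\rho)$ further (using $\rho(0)=0$) makes this $\le \tfrac12 Q(x_t)$, hence absorbed by $\mathcal A$. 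The $|\alpha| > \delta$ part of $\mathcal B$ is bounded by $C(B)\delta^{-1}M(t)$ from $|\delta_\alpha g(x_t)| \le 2M(t)$, $|\delta_\alpha f_2'| \le 2B$, $|\delta_\alpha f_1 + \delta_\alpha f_2| \le 2B|\alpha|$. Altogether $\partial_t g(x_t,t) = \mathcal A - \mathcal B \le -Q(x_t) + \tfrac12 Q(x_t) + C(B,\rho) M(t) \le C(B,\rho)M(t)$, and the bound at a minimum of $g$ is identical, closing the Gronwall estimate.

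The step I expect to be the main obstacle — besides the routine but fiddly envelope-theorem justification of $\frac{d}{dt}M(t) \le \partial_t g(x_t,t)$ — is making the splittings above rigorous when $\rho$ is \emph{not} Dini: then the kernel defining $\mathcal A$ and several of the sub-integrals above are only conditionally, not absolutely, convergent, so one must carry the truncations $\int_{|\alpha| > \eps}$ through every estimate uniformly in $\eps$ (it is the sign-definiteness of the dissipation at the maximum point that then guarantees $Q(x_t) < \infty$), or equivalently exploit the cancellations made visible by the representation $v_i(x) = \mathrm{p.v.}\int_\RR \partial_x \arctan\!\big(\delta_\alpha f_i(x)/\alpha\big)\, d\alpha$ of \eqref{eq:intro}, and let $\eps \to 0$ only at the end. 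What makes the argument succeed is precisely that, at an extremum of $g$, the dangerous near-diagonal part of the Calder\'on-commutator-type term $\mathcal B$ carries the extra small factor $\rho(|\alpha|)$ and is therefore dominated by the $\alpha^{-2}$-dissipation already present in $\mathcal A$ — the same mechanism, now applied at the level of $g$ rather than $f''$, that underlies the a priori bounds in Theorems~\ref{thm:cond} and~\ref{thm:small}.
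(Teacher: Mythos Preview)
Your argument is correct and rests on exactly the same mechanism as the paper's: at a spatial maximum of $g=f_1-f_2$ the near-origin piece of the Calder\'on-commutator-type remainder carries an extra factor $\rho(|\alpha|)$ and is absorbed into the nonnegative $\alpha^{-2}$-dissipation, while the tail contributes $C(B,\rho)M(t)$ and Gronwall closes.

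The paper packages this a bit differently. It starts from the integrated-by-parts form \eqref{eq:muskat} (rather than \eqref{eq:intro}) and writes a single structured equation
\[
\partial_t g + v_1\,\partial_x g + \LL_{f_1}[g] = T_7, \qquad T_7(x)=\int_{\RR}\frac{\delta_\alpha g(x)}{\alpha^2}\,K_{1,2}(x,\alpha)\,d\alpha,
\]
with $K_{1,2}=\dfrac{(f_2'-\Delta_\alpha f_2)(\Delta_\alpha f_1+\Delta_\alpha f_2)}{(1+(\Delta_\alpha f_1)^2)(1+(\Delta_\alpha f_2)^2)}$. Multiplying by $g$ and using the pointwise identity $2g\,\delta_\alpha g=\delta_\alpha(g^2)+(\delta_\alpha g)^2$ produces \emph{two} manifestly nonnegative dissipative terms at the maximum, $\LL_{f_1}[g^2](\bar x)$ and $\DD_{f_1}[g](\bar x)$. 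Since $|K_{1,2}|\le\rho(|\alpha|)\le\tfrac12$ on $|\alpha|\le\eps$, the inner part of $2gT_7$ is absorbed by $\tfrac12\bigl(\LL_{f_1}[g^2]+\DD_{f_1}[g]\bigr)$ with no kernel-sign computation, and the outer part is handled by Cauchy--Schwarz against $\DD_{f_1}[g]$. This replaces your integration by parts in $\mathcal A$ and the subsequent verification that $\partial_\alpha\bigl[\alpha/(\alpha^2+(\delta_\alpha f_1)^2)\bigr]<0$ near the origin; it also makes the non-Dini issue you flag essentially disappear, because $\LL_{f_1}[g^2](\bar x)\ge 0$ and $\DD_{f_1}[g](\bar x)\ge 0$ are well defined (possibly $+\infty$) without any cancellation. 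Your route is only marginally more hands-on; what it buys is that it works directly with $\partial_t g$ rather than $\partial_t|g|^2$, so no algebraic identity is needed.
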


We note that the only essential assumption in the list of Theorem \ref{thm:uniqueness} is the same uniform continuity from Theorem~\ref{thm:cond}. The remaining assumptions are present to justify the  application of the Rademacher Theorem, see the Appendix, and we believe these additional assumptions may be avoided. For instance, a sufficient condition which ensures that $\partial_t f(x,t)$ exists for all $(x,t)$ is that $f'' \in L^\infty_t W^{2,p}_x$ for some $p>1$, or that the modulus $\rho$ obeys the Dini condition.

We also note that in the course of proving Theorems \ref{thm:local} and
\ref{thm:small} we follow the standard strategy: we obtain all necessary a priori estimates and construct solutions by passing to  the limit in the regularized system as elaborated for instance in~\cite{ConstantinCordobaGancedoRodriguezStrain13}. To avoid redundancy, the proofs in this paper only consist of these a priori estimates.

\section{Preliminaries}
For the remainder of the manuscript we shall use the following notation for finite-difference quotients:
\begin{align*}
\Delta_\alpha f(x) = \frac{\delta_\alpha f(x)}{\alpha}
\end{align*}
for $x \in \RR$ and $\alpha \in \RR \setminus \{0\}$. After integration by parts \eqref{eq:intro} becomes
\begin{align}\label{eq:muskat}
\partial_t f(x,t) = P.V. \int_{\RR} \frac{f'(x,t)\alpha-\delta_\alpha f(x,t)}{ (\delta_\alpha f(x,t))^2 + \alpha^2}d\alpha
\end{align}
or equivalently,
\begin{align}\label{eq:muskat:transport}
\partial_t f(x,t) + v(x,t) \partial_x f(x,t) + P.V. \int_{\RR} \frac{\delta_\alpha f(x,t)}{ (\delta_\alpha f(x,t))^2 + \alpha^2}d\alpha = 0
\end{align}
where we have defined the  velocity field
\begin{align}
v(x,t) =- P.V. \int_{\RR} \frac{\alpha}{(\delta_\alpha f(x,t))^2 + \alpha^2} d\alpha  =- P.V. \int_{\RR} \frac{1}{(\Delta_\alpha f(x,t))^2 + 1} \frac{d\alpha}{\alpha}.
\label{eq:v:def}
\end{align}
In particular, from \eqref{eq:muskat:transport} it is immediate that the maximum principle for the global bound on $f$ holds (see \cite{CordobaGancedo09}). Moreover it is known that the maximum principle also holds for the energy (see \cite{ConstantinCordobaGancedoRodriguezStrain13}), i.e.
\begin{align}
\|f(t)\|_{L^p} \leq \|f_0\|_{L^p}
\label{eq:max:princ:Lp}
\end{align}
holds for $p=2$ and $p=\infty$.

\subsection{Equation for the first derivative}

The equation obeyed by $f'(x,t)$ is
\begin{align}\label{eq:f'}
\partial_t (f') + v(x,t) \partial_x (f') &+ P.V. \int_{\RR} \frac{\delta_\alpha f'(x,t)}{ (\delta_\alpha f(x,t))^2 + \alpha^2} d\alpha \notag\\
&= 2 \int_{\RR} \frac{ (\delta_\alpha f(x,t) - \alpha f'(x,t)) (\delta_\alpha f(x,t)) (\delta_\alpha f'(x,t))}{ \left( (\delta_\alpha f(x,t))^2 +\alpha^2 \right)^2} d\alpha.
\end{align}

\subsection{Equation for the second derivative}
The equation obeyed by the second derivative is
\begin{align}
\partial_t (f'') + v(x) \partial_x (f'') +& P.V. \int_{\RR} \frac{\delta_\alpha f''(x)}{(\delta_\alpha f(x))^2 + \alpha^2} d\alpha \notag\\
&=4\, P.V. \int_{\RR} \frac{\left(\delta_\alpha f'(x) - \alpha f''(x)\right)\delta_\alpha f(x,t)\delta_\alpha f'(x,t)}{\left( (\delta_\alpha f(x))^2 +\alpha^2 \right)^2}
  d\alpha \notag\\
 &\quad +2\, P.V. \int_{\RR}  \frac{(\delta_\alpha f(x) - \alpha f'(x)) (\delta_\alpha f'(x))^2 }{\left( (\delta_\alpha f(x))^2 +\alpha^2 \right)^2}  d\alpha \notag\\
&\quad  + 2\, P.V. \int_{\RR} \frac{(\delta_\alpha f(x) - \alpha f'(x))(\delta_\alpha f(x)) (\delta_\alpha f''(x))}{\left( (\delta_\alpha f(x))^2 +\alpha^2 \right)^2}  d\alpha \notag\\
&\quad -8\, P.V. \int_{\RR} \frac{(\delta_\alpha f(x) - \alpha f'(x))(\delta_\alpha f(x))^2 (\delta_\alpha f'(x))^2}{\left( (\delta_\alpha f(x))^2 +\alpha^2 \right)^3}
 d\alpha \notag\\
&=: T_1 + T_2 + T_3 + T_4.
\label{eq:f'':1}
\end{align}

We now pointwise multiply \eqref{eq:f'':1} by $f''(x,t)$ and obtain
\begin{align}
\left( \partial_t + v \partial_x + \LL_f \right) |f''(x,t)|^2 + \DD_f[f''](x,t) = 2 f''(x,t) \left( T_1 + T_2 + T_3 + T_4 \right)
\label{eq:f'':2}
\end{align}
where we have denoted
\begin{align}
\LL_f [g](x) = P.V. \int_{\RR} \frac{ \delta_\alpha g(x)}{ (\delta_\alpha f(x))^2 + \alpha^2} d\alpha
\label{eq:LL:f:def}
\end{align}
and
\begin{align}
\DD_f [g](x) = P.V. \int_{\RR} \frac{ (\delta_\alpha g(x))^2}{ (\delta_\alpha f(x))^2 + \alpha^2} d\alpha
\label{eq:DD:f:def}
\end{align}
for any smooth function $g$, and $T_1,\ldots,T_4$ are as given by \eqref{eq:f'':1}. It will be useful to also consider \eqref{eq:f'':2} where the transport term is written in divergence form, namely,
\begin{align}
\left( \partial_t  + \LL_f \right) |f''|^2 + \partial_x (v |f''|^2) + \DD_f[f''] = 2 f'' \left( T_1 + T_2 + T_3 + T_4 \right) + |f''|^2\, T_5
\label{eq:f'':2:div}
\end{align}
where we have denoted
\begin{align}
T_5(x,t) = \partial_x v(x,t) = 2 P.V. \int_{\RR} \frac{\delta_\alpha f'(x,t)}{\alpha^2}  \frac{\Delta_\alpha f(x,t)}{(1 + (\Delta_\alpha f(x,t))^2)^2} d\alpha.
\label{eq:T5:def}
\end{align}

Note that when $f \equiv c$, where $c$ is a constant, the above operators become
\begin{align}
\LL_c[g](x) &= P.V. \int_{\RR} \frac{ \delta_\alpha g(x)}{\alpha^2} d\alpha =: \Lambda g(x) \notag \\
\DD_c [g](x) &= P.V. \int_{\RR} \frac{ (\delta_\alpha g(x))^2}{\alpha^2} d\alpha =: \DD[g](x). \label{eq:DD}
\end{align}
We further note that
\begin{align}
\int_{\RR} \DD[g](x) dx = \|g\|_{\dot{H}^{1/2}}^2
\label{eq:H:0.5}
\end{align}
and
\begin{align}
\DD_f[g](x) \geq \frac{1}{1+ \|f'\|_{L^\infty}^2} \DD[g](x)
\label{eq:Df:DD}
\end{align}
for all $x \in \RR$.

\subsection{Evolution of the $p$th power of the second derivative, for $p \geq 2$}
When $p \in [2,\infty)$, we consider the function
\[
\phi_p (r) = |r|^{p/2},
\]
and we multiply \eqref{eq:f'':2} by $\phi_p'(|f''(x,t)|^2)$. The convexity of $\phi_p$ (which holds if $p\geq 2$) ensures that
\[
\phi_p'(|f''(x,t)|^2) \LL_f[|f''(x,t)|^2] \geq \LL_f[\phi_p(|f''(x,t)|^2)]
\]
and we thus obtain
\begin{align}
\left( \partial_t + v \partial_x + \LL_f \right) |f''(x,t)|^p + \frac{p |f''(x,t)|^{p-2}}{2} \DD_f[f''](x,t) \leq p |f''(x,t)|^{p-1} \left| T_1 + T_2 + T_3 + T_4 \right|.
\label{eq:f'':p:geq:2}
\end{align}
Moreover, we may again write this in divergence form as
\begin{align}
\left( \partial_t + \LL_f \right) |f''|^p + \partial_x( v |f''|^p) + \frac{p |f''|^{p-2}}{2} \DD_f[f''] \leq p |f''|^{p-1} \left| T_1 + T_2 + T_3 + T_4 \right| + |f''|^p \; |T_5|
\label{eq:f'':p:geq:2:div}
\end{align}
where $T_5$ is as defined in \eqref{eq:T5:def}.

\subsection{Evolution of the $p$th power of the second derivative, for $p\in(1,2)$}
When $p \in (1,2)$, the above trick does not work, since the function $\phi_p$ is in this case concave. Instead, we consider the function
\[
\psi_p(r) = |r|^p
\]
which is convex, and obeys
\begin{align}
\psi_p(s) - \psi_p(r) + \psi_p'(r) (r-s) = |s|^p - |r|^p + p \frac{|r|^p}{r} (r-s) \geq p (p-1)
\begin{cases}
\frac{(r-s)^2}{2 |r|^{2-p}}, & 2 |r| \geq |s| \\
\frac{|r-s|^p}{4}, & |s| \geq 2 |r|.
\end{cases}
\label{eq:convex}
\end{align}
for all $r,s \in \RR$. Thus, for $p \in (1,2)$ we are lead to consider the nonlocal operators
\[
\DD_{f}^p[g](x) = \int_{\RR} \frac{|g(x) - g(x-\alpha)|^p}{\alpha^2 + (\delta_\alpha f(x))^2} d\alpha
\]
and
\[
\DD^{p}[g](x) = \int_{\RR} \frac{|g(x) - g(x-\alpha)|^p}{\alpha^2} d\alpha.
\]
Note that for $p=2$, we have $\DD_2 = \DD$, and that for $g \in L^\infty \cap C^1$ the above integrals are well-defined (even without a principal value), since $p>1$. Using \eqref{eq:convex} we may show that upon multiplying \eqref{eq:f'':1} by $\psi_p'(f''(x,t)) = p |f''(x,t)|^p/f''(x,t)$, that
\begin{align}
(\partial_t + v(x,t) \partial_x + \LL_f) |f''(x,t)|^p
&+ \frac{p(p-1)}{4} \min\left\{ \frac{2 \DD_{f}[f''](x,t)}{|f''(x,t)|^{2-p}} , \DD_{f}^p[f''](x,t)\right\} \notag\\
&\quad \leq p |f''(x,t)|^{p-1} |T_1(x,t) + \ldots + T_4(x,t)|,
\label{eq:f'':p:leq:2}
\end{align}
or in divergence form,
\begin{align}
(\partial_t + \LL_f) |f''|^p  + \partial_x( v  |f''|^p )
&+ \frac{p(p-1)}{4} \min\left\{ \frac{2 \DD_f[f'']}{|f''|^{2-p}} , \DD_{f}^p[f'']  \right\} \notag\\
&\quad \leq p |f''|^{p-1} |T_1+ \ldots + T_4| + |f''|^p |T_5|,
\label{eq:f'':p:leq:2:div}
\end{align}
in analogy with \eqref{eq:f'':p:geq:2} and  \eqref{eq:f'':p:geq:2:div}.

\subsection{Bounds for Taylor expansions}
The finite difference quotient may be bounded directly as
\begin{align}
|\Delta_\alpha f(x)|\leq B
\label{eq:f':bnd}
\end{align}
for any $\alpha,x$.
We note that we may expand
\[
\RSZ_1[f'](x,\alpha ) = \Delta_\alpha f(x)  - f'(x) =  \frac{1}{\alpha} \int_{x-\alpha}^x (f'(z)-f'(x)) dz.
\]
The bound
\[
|\RSZ_1[f'](x,\alpha)| \leq 2B
\]
is immediate due to $\|f'\|_{L^\infty} \leq B$, but if additionally $f'$ has a \MOC $\rho$, then we have the improved bound
\begin{align}
|\Delta_\alpha f (x) - f'(x) | \leq \rho(|\alpha|).
\label{eq:f':MOC:easy:bound}
\end{align}
Without loss of generality we assume $\rho$ is not linear near the origin, since then the conclusion and the assumption of the theorem are identical.

It will be convenient to denote
\begin{align}
\RSZ_1[f''](x,\alpha) &:= \Delta_\alpha f'(x) - f''(x) \notag\\
&= \frac{1}{\alpha} \int_{x-\alpha}^x (f''(z) - f''(x) ) dz
\label{eq:R1:def}
\end{align}
and
\begin{align}
\RSZ_2[f''](x,\alpha) &:= \Delta_\alpha f(x) - f'(x) +\frac\alpha2 f''(x) \notag\\
&= \RSZ_1[f'](x,\alpha) + \frac \alpha 2 f''(x) \notag\\
&=\frac{1}{\alpha} \int_{x-\alpha}^{x} \int_{x}^{z} ( f''(w) - f''(x) ) dw dz.
\label{eq:R2:def}
\end{align}
as the first order and the second order expansions of $f''(x-\alpha)$ around $f''(x)$.
For these terms we have pointwise in $x$ estimates and $\alpha$ in terms of the dissipation present on the right side of \eqref{eq:f'':2}. First, we have that
\begin{align}
|\RSZ_1[f''](x,\alpha)| &\leq \frac{1}{\alpha} \int_{x-\alpha}^x \frac{|f''(z) - f''(x)|}{|z-x|} |z-x| dz \notag\\
&\leq \frac{1}{\alpha} (\DD[f''](x))^{1/2} \left( \int_{x-\alpha}^{x} |z-x|^2 dx \right)^{1/2} \notag\\
&\leq C  \alpha^{1/2} (\DD[f''](x))^{1/2}
\label{eq:R1}
\end{align}
for any $\alpha>0$, for some universal constant $C>0$. The bound for all $\alpha \neq 0$ trivially holds upon replacing $\alpha^{1/2}$ with $|\alpha|^{1/2}$.
Similarly, it follows that
\begin{align}
|\RSZ_2[f''](x,\alpha)|
&\leq \frac{1}{\alpha} \int_{x-\alpha}^{x} \int_{x}^{z} \frac{|f''(w) - f''(x)|}{|w-x|} |w-x| dw dz \notag\\
&\leq \frac{1}{\alpha} \int_{x-\alpha}^{x} \left(\int_{x}^{z} \frac{|f''(w) - f''(x)|^2}{|w-x|^2}dw \right)^{1/2}   \left(\int_{x}^{z}|w-x|^2 dw  \right)^{1/2} dz \notag\\
&\leq C  \frac{(\DD[f''](x))^{1/2}}{\alpha} \int_{x-\alpha}^{x} |z-x|^{3/2} dz \notag\\
&\leq C \alpha^{3/2} (\DD[f''](x))^{1/2}
\label{eq:R2}
\end{align}
for any $\alpha>0$, for some universal constant $C>0$. The bound for all $\alpha \neq 0$ trivially holds upon replacing $\alpha^{3/2}$ with $|\alpha|^{3/2}$. Lastly, we note that for $p\in (1,2)$ in a similar way to \eqref{eq:R1} and \eqref{eq:R2} we may bound
\begin{align}
|\RSZ_1[f''](x,\alpha)| &\leq C \alpha^{1/p} (\DD^p[f''](x))^{1/p}
\label{eq:R1:p} \\
 |\RSZ_2[f''](x,\alpha)| &\leq C \alpha^{(p+1)/p} (\DD^p[f''](x))^{1/p}
 \label{eq:R2:p}
\end{align}
with a constant $C = C(p) > 0 $.

\section{Nonlinear lower bounds}
In this section we use \eqref{eq:Lip:cond} in order to obtain lower bounds for $\DD_f[f''](x)$ at any $x \in \RR$. These lower bounds follow in a similar way to the bounds previously established in~\cite{ConstantinVicol12,ConstantinTarfuleaVicol15} for $\Lambda$. The main results of this section are.
\begin{lemma}\label{lem:lower:Lip}
 Assume that $f$ is Lipschitz continuous with Lipschitz bound $B$, i.e. that \eqref{eq:Lip:cond} holds.
 Then we have that
\begin{align}
\DD_f [f''](x) \geq \frac{1}{24 B(1+B^2)} |f''(x)|^3
\label{eq:lower:bound}.
\end{align}
holds pointwise for $x \in \RR$. Moreover, for $p \in (1,2)$, we have that
\begin{align}
\DD_f^p[f''](x) \geq \frac{1}{96 B(1+B^2)} |f''(x)|^{p+1}
\label{eq:lower:bound:Dp}
\end{align}
holds for all $x \in \RR$.
\end{lemma}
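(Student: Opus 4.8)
The plan is to obtain the lower bound for $\DD_f[f''](x)$ by restricting the integral defining $\DD_f[f'']$ to a well-chosen annulus of radii $r \leq |\alpha| \leq 2r$, where $r$ is a parameter to be optimized at the end depending on $|f''(x)|$, $B$, and the size of $\DD[f''](x)$ itself. On this annulus the denominator $(\delta_\alpha f(x))^2 + \alpha^2$ is bounded above by $(1+B^2)\alpha^2 \leq 4(1+B^2) r^2$, using \eqref{eq:Lip:cond} in the form $|\delta_\alpha f(x)| \leq B|\alpha|$. For the numerator we use the second-order Taylor expansion \eqref{eq:R1:def}: writing $\delta_\alpha f'(x) = \alpha f''(x) + \alpha \RSZ_1[f''](x,\alpha)$, and recalling from \eqref{eq:R1} that $|\RSZ_1[f''](x,\alpha)| \leq C |\alpha|^{1/2} (\DD[f''](x))^{1/2}$, so that on the annulus $|\delta_\alpha f'(x)| \geq |\alpha|\, |f''(x)| - C |\alpha|^{3/2} (\DD[f''](x))^{1/2} \geq r|f''(x)| - C (2r)^{3/2}(\DD[f''](x))^{1/2}$ provided we only integrate over, say, $\alpha \in (r,2r)$ and are careful with signs (one should really integrate over an interval on which $\RSZ_1$ does not change the sign of the leading term — but since $\RSZ_1$ is controlled by $\DD[f'']$, choosing $r$ small enough relative to $|f''(x)|^2/\DD[f''](x)$ makes the correction negligible, at most half the leading term). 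Then
\[
\DD_f[f''](x) \geq \int_{r}^{2r} \frac{|\delta_\alpha f'(x)|^2}{4(1+B^2)r^2}\, d\alpha \geq \frac{r}{4(1+B^2)r^2} \cdot \frac{r^2 |f''(x)|^2}{4} = \frac{r |f''(x)|^2}{16(1+B^2)},
\]
valid as long as $r \leq c\, |f''(x)|^2 / \DD[f''](x)$ for a small universal $c$.

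Now comes the dichotomy that is standard in these nonlinear maximum principle arguments (cf.\ the references \cite{ConstantinVicol12,ConstantinTarfuleaVicol15} cited in the section). Either $\DD[f''](x)$ is small, specifically $\DD[f''](x) \leq \tfrac{1}{c\, B}|f''(x)|^3 / |f''(x)|^2 \cdot \text{(something)}$ — more precisely, if $\DD[f''](x) \leq \tfrac{|f''(x)|^3}{2B}$ then we are essentially done by a different route, or we can afford to take $r$ as large as $B^{-1}$ (note $r$ must also be at most of order $B^{-1}$ for the Taylor correction to not dominate, since $|\alpha|\,|f''(x)|$ vs.\ the intrinsic scale). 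Substituting the threshold value $r \sim \min\{ B^{-1}, |f''(x)|^2/\DD[f''](x)\}$ into the displayed bound: in the first case we get $\DD_f[f''](x) \gtrsim |f''(x)|^2 / (B(1+B^2))$, which after comparing with the trivial upper bound $\DD[f''](x)$ we can bootstrap; in the second case, $r = |f''(x)|^2/\DD[f''](x)$ gives $\DD_f[f''](x) \gtrsim |f''(x)|^4/(\DD[f''](x)(1+B^2))$, and combined with $\DD_f[f''](x) \geq \tfrac{1}{1+B^2}\DD[f''](x)$ from \eqref{eq:Df:DD}, multiplying the two lower bounds gives $\DD_f[f''](x)^2 \gtrsim |f''(x)|^4/(1+B^2)^2$, i.e.\ $\DD_f[f''](x) \gtrsim |f''(x)|^2/(1+B^2)$. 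In every case one extracts at least $\DD_f[f''](x) \gtrsim |f''(x)|^3 / (B(1+B^2))$ — the extra power of $|f''(x)|$ relative to what the annulus directly gives is exactly what the case $\DD[f''](x) \gtrsim |f''(x)|^3/B$ supplies for free, since there $\DD_f[f''](x) \geq \tfrac{1}{1+B^2}\DD[f''](x) \gtrsim \tfrac{|f''(x)|^3}{B(1+B^2)}$. Tracking the constants carefully through the annulus estimate and the optimization is what produces the explicit $1/(24 B(1+B^2))$.

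For the $p \in (1,2)$ statement \eqref{eq:lower:bound:Dp} the argument is structurally identical: restrict $\DD_f^p[f'']$ to the same annulus, bound the denominator by $4(1+B^2)r^2$, and on the numerator use $|\delta_\alpha f'(x)|^p \geq (|\alpha||f''(x)| - |\alpha|\RSZ_1)^p$ with the $p$-version of the Taylor bound \eqref{eq:R1:p}, namely $|\RSZ_1[f''](x,\alpha)| \leq C|\alpha|^{1/p}(\DD^p[f''](x))^{1/p}$. Integrating $|\alpha|^p$ over $(r,2r)$ yields a factor $\sim r^{p+1}$, so $\DD_f^p[f''](x) \gtrsim r^{p+1}|f''(x)|^p/((1+B^2)r^2) = r^{p-1}|f''(x)|^p/(1+B^2)$, valid for $r \lesssim \min\{B^{-1}, |f''(x)|^p/\DD^p[f''](x)\}$, and the same dichotomy (small $\DD^p$ vs.\ comparing with $\DD_f^p[f''] \geq \tfrac{1}{1+B^2}\DD^p[f'']$) closes it with the stated constant $1/(96B(1+B^2))$.

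\medskip

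\textbf{Main obstacle.} The delicate point is handling the sign and size of the Taylor remainder $\RSZ_1[f''](x,\alpha)$ inside the annulus: one cannot simply assert $|\delta_\alpha f'(x)| \geq r|f''(x)|/2$ on a full annulus, because $\RSZ_1$ could conceivably cancel the leading term. The resolution is to observe that $\RSZ_1$ is itself controlled (in $L^2$ over the annulus) by $r^{1/2}(\DD[f''](x))^{1/2}$, so the set of $\alpha$ in the annulus where the correction exceeds half the leading term has small measure — Chebyshev applied to \eqref{eq:R1} — and the remaining good set still carries a definite fraction of the annulus length, provided the optimization constraint $r \lesssim |f''(x)|^2/\DD[f''](x)$ is enforced. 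Getting this measure-theoretic step clean while keeping the constants explicit is the one spot requiring care; everything else is the routine annulus-plus-dichotomy bookkeeping familiar from the SQG nonlinear maximum principle.
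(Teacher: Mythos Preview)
Your proposal contains a fundamental misreading of the integrand. By definition \eqref{eq:DD:f:def},
\[
\DD_f[f''](x) = \int_{\RR} \frac{(\delta_\alpha f''(x))^2}{(\delta_\alpha f(x))^2 + \alpha^2}\, d\alpha,
\]
so the numerator is $(\delta_\alpha f''(x))^2 = (f''(x)-f''(x-\alpha))^2$, \emph{not} $(\delta_\alpha f'(x))^2$. Your entire annulus computation Taylor-expands $\delta_\alpha f'$ and then inserts $|\delta_\alpha f'|^2$ into the integral, which is simply the wrong quantity. A pointwise Taylor expansion of the correct numerator $\delta_\alpha f''$ would begin with $\alpha f'''(x)$, a derivative you do not control, so the approach cannot be repaired by swapping in the right function.

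Even setting this aside, the dichotomy you sketch does not close to the stated bound: the self-referential step ``multiply the two lower bounds'' yields $\DD_f[f''](x)\gtrsim |f''(x)|^2/(1+B^2)$, which is false in general (take $f''$ nearly constant on a long interval while $\|f'\|_{L^\infty}\leq B$; then $\DD[f'']$ is small but $|f''|^2$ is not). The factor $B^{-1}$ in the target, which reflects the scale at which the Lipschitz bound on $f'$ enters, never appears in your optimization.

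The paper's argument is quite different and avoids any pointwise lower bound on $|\delta_\alpha f''|$. One writes $(f''(x)-f''(x-\alpha))^2 \geq |f''(x)|^2 - 2 f''(x) f''(x-\alpha)$, restricts to $|\alpha|\gtrsim r$ via a smooth cutoff, and handles the cross term by writing $f''(x-\alpha) = -\partial_\alpha f'(x-\alpha)$ and integrating by parts in $\alpha$. This converts the cross term into an integral involving only $|f'|\leq B$, producing
\[
\DD_f[f''](x) \geq \frac{1}{1+B^2}\left( \frac{2|f''(x)|^2}{r} - \frac{24 B |f''(x)|}{r^2}\right),
\]
and the choice $r = 24B/|f''(x)|$ gives \eqref{eq:lower:bound} directly, with no dichotomy. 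The $p\in(1,2)$ case uses the elementary inequality $|a-b|^p \geq |a|^p(1 - p\,b/a)$ in place of the square expansion, followed by the same integration by parts.
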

The above lower bound however will only suffice to prove a small $B$ result. For large values of $B$ we must obtain a better lower bound. If we additionally use a modulus of continuity obeyed by $f'$, we obtain:
\begin{lemma}\label{lem:lower:Lip:MOC}
 In addition to the assumption in Lemma~\ref{lem:lower:Lip}, assume that $f'$ obeys a modulus of continuity $\rho$. Then there exists a continuous  function $L_B : [0,\infty) \to [0,\infty)$ that implicitly also depends on $\rho$, such that
\begin{align}
\DD_f[f''](x) \geq L_B(|f''(x)|)
\label{eq:optimal:lower:bound}
\end{align}
holds pointwise for $x \in \RR$, and we have that
\begin{align}
\lim_{y \to \infty} \frac{L_B(y)}{y^3} = \infty
\label{eq:proof:better}
\end{align}
at a rate that depends on how fast $\lim_{r \to 0^+} \rho(r) = 0$.
\end{lemma}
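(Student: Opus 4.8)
The plan is to refine the lower bound of Lemma~\ref{lem:lower:Lip} by splitting the defining integral for $\DD_f[f''](x)$ into a near region $|\alpha| \leq \ell$ and a far region $|\alpha| > \ell$, where the cutoff $\ell = \ell(|f''(x)|)$ will be chosen at the end to optimize the bound, and crucially will be allowed to grow (rather than shrink) as $|f''(x)| \to \infty$. Fix $x$ and write $M = |f''(x)|$; without loss of generality $M > 0$. Using the second-order Taylor expansion $\delta_\alpha f'(x) = \alpha f''(x) + \alpha \RSZ_1[f''](x,\alpha)$ (cf.~\eqref{eq:R1:def}), we have on the near region, for $|\alpha| \leq \ell$,
\begin{align*}
|\delta_\alpha f'(x)| \geq M |\alpha| - |\alpha|\, |\RSZ_1[f''](x,\alpha)| \geq M|\alpha| - C|\alpha|^{3/2} (\DD[f''](x))^{1/2},
\end{align*}
using \eqref{eq:R1}. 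If we are in the regime where $\DD[f''](x) \leq c_0 M^2 \ell^{-1}$ for a small absolute constant $c_0$, then $|\delta_\alpha f'(x)| \geq \tfrac12 M |\alpha|$ on the whole near region, and hence, using $(\delta_\alpha f(x))^2 + \alpha^2 \leq (1+B^2)\alpha^2$,
\begin{align*}
\DD_f[f''](x) \geq \int_{|\alpha| \leq \ell} \frac{|\delta_\alpha f'(x)|^2}{(1+B^2)\alpha^2}\, d\alpha \geq \frac{M^2}{4(1+B^2)} \int_{|\alpha|\leq \ell} d\alpha = \frac{M^2 \ell}{2(1+B^2)}.
\end{align*}
In the complementary regime $\DD[f''](x) > c_0 M^2 \ell^{-1}$, we use instead the elementary bound $\DD_f[f''](x) \geq (1+B^2)^{-1} \DD[f''](x) > c_0 (1+B^2)^{-1} M^2 \ell^{-1}$ from \eqref{eq:Df:DD}. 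Taking the minimum of the two alternatives, we obtain
\begin{align*}
\DD_f[f''](x) \geq \frac{1}{1+B^2} \min\left\{ \frac{M^2 \ell}{2},\ \frac{c_0 M^2}{\ell} \right\}
\end{align*}
for \emph{every} admissible choice of $\ell$. The point of Lemma~\ref{lem:lower:Lip} corresponds to taking $\ell \sim M/B$ (so that $\delta_\alpha f$ is also controlled); here instead we want $\ell$ large, which forces us to reconsider the far region and, more importantly, to use the modulus of continuity to control where the linear approximation $|\delta_\alpha f'(x)| \geq \tfrac12 M|\alpha|$ can possibly hold.

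The role of $\rho$ enters as follows: the bound $|\delta_\alpha f'(x)| \leq \rho(|\alpha|)$ from \eqref{eq:f':MOC:DEF} shows that $M|\alpha| \leq |\delta_\alpha f'(x)| + |\alpha|\,|\RSZ_1[f''](x,\alpha)|$ cannot be improved naively, but more usefully, for the argument to close we only need the linear lower bound on an interval whose length $\ell$ we control, and on that interval the competition is between $M\ell$ (the linear growth) and $\rho(\ell)$ (the a priori ceiling on $|\delta_\alpha f'(x)|$). Concretely: if $\ell$ is chosen so small that $M \ell \leq 2\rho(\ell)$ fails —i.e.\ $M\ell > 2\rho(\ell)$— there is no contradiction yet, but the genuinely new input is that we should pick $\ell = \ell(M)$ as the \emph{largest} value for which the near-region analysis is consistent with $|\delta_\alpha f'| \leq \rho$, which means roughly $M\ell \lesssim \rho(\ell)$, equivalently $\ell$ determined implicitly by $M \sim \rho(\ell)/\ell$. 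Since $\rho(\ell)/\ell \to 0$ as $\ell \to 0^+$ is false — rather $\rho(\ell)/\ell$ may blow up — one instead uses the monotonicity of $\rho$: for small $r$, $\rho(r)$ is small, so to have $\rho(\ell) \geq M\ell$ with $M$ large one is forced to take $\ell$ small, and the rate at which $\ell(M) \to 0$ is governed by the rate at which $\rho(r) \to 0$. Plugging such $\ell(M)$ into $\DD_f[f''](x) \geq (1+B^2)^{-1} \min\{M^2\ell(M)/2, c_0 M^2/\ell(M)\}$ and calling this quantity $L_B(M)$ gives a continuous function of $M$ with $L_B(M)/M^3 \geq (1+B^2)^{-1}\min\{\ell(M)/(2M), c_0/(M\ell(M))\}$; since $\ell(M) \to 0$ slower than any prescribed polynomial rate can be arranged (the only constraint being $\rho(\ell(M)) \sim M \ell(M)$), the term $c_0/(M \ell(M)) \to \infty$, yielding \eqref{eq:proof:better} at a rate dictated by $\rho$.

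The main obstacle I anticipate is organizing the dichotomy cleanly so that $\ell$ can be chosen \emph{pointwise in $x$} depending only on $M = |f''(x)|$ (and on $B$, $\rho$) and so that the resulting $L_B$ is genuinely a continuous, well-defined function on $[0,\infty)$ — in particular one must verify $L_B(0) = 0$ and handle the small-$M$ regime where $\ell(M)$ would want to be large but the Lipschitz control $|\delta_\alpha f| \leq B|\alpha|$ is needed to keep the denominator comparable to $\alpha^2$ (this is exactly where the bound of Lemma~\ref{lem:lower:Lip} is recovered and should be used as a floor: $L_B(M) \geq \frac{M^3}{24B(1+B^2)}$ always, and the modulus of continuity only improves this for large $M$). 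A secondary technical point is that the inequality $M\ell > 2\rho(\ell)$ versus $M\ell \leq 2\rho(\ell)$ should be compared against the threshold of the $\DD[f'']$-dichotomy above, so that in \emph{every} case one lands in a branch with a usable lower bound; since $\rho$ is only assumed non-decreasing, bounded, with $\rho(0)=0$ and no Dini or rate assumption, one must be careful to never divide by $\rho$ or invert it where it may fail to be strictly increasing, instead working with the sublevel sets $\{r : \rho(r) \geq M r\}$ directly and defining $\ell(M)$ as (a continuous regularization of) the supremum of such $r$. The $p \in (1,2)$ version, if needed, follows verbatim replacing \eqref{eq:R1} by \eqref{eq:R1:p} and \eqref{eq:Df:DD} by the analogous inequality for $\DD_f^p$, but the statement as given only asserts the $\DD_f$ bound.
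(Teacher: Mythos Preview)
There is a genuine gap: your near-region computation is based on the wrong integrand. By definition
\[
\DD_f[f''](x) = P.V.\int_{\RR}\frac{(\delta_\alpha f''(x))^2}{(\delta_\alpha f(x))^2+\alpha^2}\,d\alpha,
\]
so the numerator involves $\delta_\alpha f'' = f''(x)-f''(x-\alpha)$, not $\delta_\alpha f'$. Your displayed inequality $\DD_f[f''](x)\geq \int_{|\alpha|\leq \ell}\frac{|\delta_\alpha f'(x)|^2}{(1+B^2)\alpha^2}\,d\alpha$ is therefore false, and with it the entire near-region branch collapses. The Taylor identity $\delta_\alpha f'(x)=\alpha f''(x)+\alpha\RSZ_1[f''](x,\alpha)$ is correct, but it gives information about $\delta_\alpha f'$, which is simply not what appears in $\DD_f[f'']$. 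Even ignoring this, the subsequent logic is inconsistent: you first say $\ell$ should \emph{grow} as $M\to\infty$, then define $\ell(M)$ via $M\ell\sim\rho(\ell)$ which forces $\ell(M)\to 0$; and the bound $\min\{M^2\ell/2,\,c_0 M^2/\ell\}$, optimized freely over $\ell$, is only $\sim M^2$, not the super-cubic growth required.

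The mechanism you are missing is the integration by parts from the proof of Lemma~\ref{lem:lower:Lip}. One discards the near region entirely, writes $(\delta_\alpha f'')^2\geq |f''(x)|^2-2f''(x)f''(x-\alpha)$ on $|\alpha|\gtrsim r$, and then integrates the cross term by parts using $f''(x-\alpha)=-\partial_\alpha\big(f'(x-\alpha)-f'(x)\big)$. This replaces the uncontrolled $f''(x-\alpha)$ by $\delta_\alpha f'(x)$, and \emph{now} the modulus of continuity $|\delta_\alpha f'(x)|\leq\rho(|\alpha|)$ enters, improving the $B/r^2$ error from Lemma~\ref{lem:lower:Lip} to $\int_{r/2}^\infty \rho(\alpha)\alpha^{-3}\,d\alpha$. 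Choosing $r=r(M)$ to balance gives $L_B(M)=M^2/((1+B^2)r(M))$ with $r(M)\to 0$, and the super-cubic growth \eqref{eq:proof:better} follows from $\lim_{r\to 0^+}r^2\int_{r/2}^\infty \rho(\alpha)\alpha^{-3}\,d\alpha=0$, which is where $\rho(0)=0$ is used.
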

Next we provide two lower bounds whose proofs follow by similar arguments to those in Lemma~\ref{lem:lower:Lip} (see also~\cite{ConstantinVicol12,ConstantinTarfuleaVicol15}).
\begin{lemma}\label{lem:lower:fpp:Lp}
Assume that $f$ is Lipschitz continuous with Lipschitz bound $B$, and let $p \in (1,\infty)$. Then the following lower bound
\begin{align}
\DD_f [f''](x) &\geq \frac{1}{8^p \|f''\|^p_{L^p}(1+B^2)} |f''(x)|^{2+p}
\label{eq:lower:fpp:Lp}
\end{align}
holds for all $x\in\RR$. Moreover, for $p \in (1,2)$ we have that
\begin{align}
\DD_f^p[f''](x) &\geq \frac{1}{128 \|f''\|^p_{L^p}(1+B^2)} |f''(x)|^{2p}
\label{eq:lower:fpp:Dp:Lp}
\end{align}
holds.
\end{lemma}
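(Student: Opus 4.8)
The plan is to follow the nonlinear lower bound mechanism of~\cite{ConstantinVicol12,ConstantinTarfuleaVicol15} (exactly as in the proof of Lemma~\ref{lem:lower:Lip}), in two steps. First I would strip off the denominator: since the Lipschitz bound~\eqref{eq:f':bnd} gives $(\delta_\alpha f(x))^2 \le B^2\alpha^2$, we have $\alpha^2 + (\delta_\alpha f(x))^2 \le (1+B^2)\alpha^2$, so that — just as in~\eqref{eq:Df:DD} —
\begin{align*}
\DD_f[f''](x) \ge \frac{1}{1+B^2}\,\DD[f''](x), \qquad \DD_f^p[f''](x) \ge \frac{1}{1+B^2}\,\DD^p[f''](x).
\end{align*}
It therefore suffices to bound $\DD[f'']$ and $\DD^p[f'']$ from below by $|f''(x)|^{2+p}/(C\|f''\|_{L^p}^p)$ and $|f''(x)|^{2p}/(C\|f''\|_{L^p}^p)$ respectively, with constants that absorb into those stated.

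For the main step I would fix $x$, set $M = |f''(x)|$, dispose of the trivial cases $M=0$ and $\|f''\|_{L^p}=\infty$, and assume without loss of generality that $f''(x) = M > 0$ (replacing $f''$ by $-f''$ changes neither side). For a radius $\rho>0$ to be chosen, I split the interval $\{|\alpha|\le\rho\}$ into the ``good set'' $G = \{|\alpha|\le\rho :\ |\delta_\alpha f''(x)| \ge M/2\}$ and its complement. On the complement, $|f''(x-\alpha)| \ge |f''(x)| - |\delta_\alpha f''(x)| > M/2$, so Chebyshev's inequality bounds the measure of the complement by $(M/2)^{-p}\|f''\|_{L^p}^p = 2^p M^{-p}\|f''\|_{L^p}^p$. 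Choosing $\rho = 2^p M^{-p}\|f''\|_{L^p}^p$ then forces $|G|\ge 2\rho - \rho = \rho$. Restricting the defining integrals to $G$, where $|\alpha|\le\rho$ and $|\delta_\alpha f''(x)|\ge M/2$, gives
\begin{align*}
\DD[f''](x) &\ge \frac{(M/2)^2}{\rho^2}\,|G| \ge \frac{M^2}{4\rho} = \frac{M^{p+2}}{2^{p+2}\|f''\|_{L^p}^p}, \\
\DD^p[f''](x) &\ge \frac{(M/2)^p}{\rho^2}\,|G| \ge \frac{(M/2)^p}{\rho} = \frac{M^{2p}}{4^p\|f''\|_{L^p}^p}.
\end{align*}
Combining with the first step and using $2^{p+2}\le 8^p$ for $p\ge1$ and $4^p\le 128$ for $p\in(1,2)$ then yields~\eqref{eq:lower:fpp:Lp} and~\eqref{eq:lower:fpp:Dp:Lp}.

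I do not expect a genuine obstacle here: this is the same maximum-principle scheme that underlies Lemma~\ref{lem:lower:Lip}, the sole difference being that the bad set is now measured using $\|f''\|_{L^p}$ rather than $\|f'\|_{L^\infty}$ — which is exactly why one power of $|f''(x)|$ in the numerator gets traded for a factor $\|f''\|_{L^p}^{-p}$. The only points that require minor care are choosing the cutoff $\rho$ so that the good set retains at least half of the interval, handling the degenerate cases cleanly, and checking that the explicit numerical constants are absorbed by those in the statement.
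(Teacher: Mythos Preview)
Your argument is correct, but it follows a genuinely different route from the paper. The paper proceeds exactly as in the proof of Lemma~\ref{lem:lower:Lip}: it inserts the smooth cutoff $\chi(\alpha/r)$, expands $(\delta_\alpha f''(x))^2 \geq |f''(x)|^2 - 2 f''(x) f''(x-\alpha)$, and then --- this is the only change relative to Lemma~\ref{lem:lower:Lip} --- bounds the cross term by H\"older against $\|f''\|_{L^p}$ rather than integrating by parts to reach $f'$. Optimizing the resulting inequality
\[
\DD_f[f''](x) \geq \frac{2}{1+B^2}\frac{|f''(x)|^2}{r} - \frac{8}{1+B^2}\frac{|f''(x)|\,\|f''\|_{L^p}}{r^{(p+1)/p}}
\]
over $r$ gives \eqref{eq:lower:fpp:Lp}; the $\DD_f^p$ bound is obtained the same way starting from \eqref{eq:lower:bound:proof:Dp}. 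Your argument instead is a level-set/Chebyshev scheme: restrict to $|\alpha|\le\rho$, show via Chebyshev that at least half of this interval lies in the good set $G=\{|\delta_\alpha f''(x)|\ge M/2\}$ once $\rho = 2^p M^{-p}\|f''\|_{L^p}^p$, and read off the lower bound from $G$.

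Both approaches are short; yours is arguably more elementary (no smooth cutoff, no H\"older) and in fact yields slightly sharper numerical constants ($2^{p+2}$ and $4^p$ in place of $8^p$ and $128$). The paper's approach has the advantage of being a one-line modification of the preceding lemma, so it fits the narrative more tightly. One cosmetic remark: the reduction to $f''(x)>0$ is unnecessary, since every quantity you use is already even in $f''$; but it does no harm.
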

\begin{lemma}\label{lem:lower:fppp}
Assume that $f$ is a regular function with Lipschitz bound $B$. Then the following lower bound holds for any $x\in\RR$.
\begin{align}
\DD_f [f'''](x) \geq \frac{1}{24 \|f''\|_{L^\infty}(1+B^2)}|f'''(x)|^3.
\label{eq:lower:fppp}
\end{align}
\end{lemma}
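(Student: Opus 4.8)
The plan is to mimic the proof of the pointwise lower bound \eqref{eq:lower:bound} in Lemma~\ref{lem:lower:Lip}, but with $f''$ replaced by $f'''$ in the numerator of the dissipation and with $\|f''\|_{L^\infty}$ playing the role that $B = \|f'\|_{L^\infty}$ played before. Fix $x \in \RR$ and write $g = f'''$ for brevity. The quantity to bound from below is
\[
\DD_f[g](x) = \int_\RR \frac{(\delta_\alpha g(x))^2}{\alpha^2 + (\delta_\alpha f(x))^2}\, d\alpha.
\]
First I would dispose of the trivial case $g(x) = f'''(x) = 0$, for which there is nothing to prove. Otherwise, the key idea is to split the region of integration: for $|\alpha|$ small, $\delta_\alpha g(x) \approx \alpha g(x)$ is nonzero and bounded below, so the integrand is comparable to $g(x)^2 \alpha^2 / (\alpha^2 + (\delta_\alpha f(x))^2)$; and since $f$ is Lipschitz with constant $B$, we have $|\delta_\alpha f(x)| \leq B|\alpha|$, so the denominator is at most $(1+B^2)\alpha^2$. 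Thus on the set where $|\delta_\alpha g(x)| \geq \tfrac12 |\alpha| |g(x)|$ one gets an integrand bounded below by $\tfrac{1}{4(1+B^2)} |g(x)|^2$.

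The second step is to identify this set. By the mean value / fundamental theorem of calculus representation $\delta_\alpha g(x) = \alpha g(x) - \alpha \RSZ_1[g](x,\alpha)$ (the analogue of \eqref{eq:R1:def} applied to $g = f'''$, i.e.\ $\Delta_\alpha g(x) - g(x) = \frac1\alpha\int_{x-\alpha}^x (g(z)-g(x))dz$), and crucially $g(z) - g(x) = f'''(z) - f'''(x)$ can be controlled by $2\|f''\|_{L^\infty}$ only as a crude bound — that is not small. The better route, exactly as in Lemma~\ref{lem:lower:Lip}, is not to expand $g$ at all but rather to use the \emph{cruder} Taylor bound on $\delta_\alpha f$: write $|\delta_\alpha g(x)| \geq |\alpha||g(x)| - |\alpha \Delta_\alpha g(x) - \alpha g(x)|$, and bound $|\Delta_\alpha g(x) - g(x)| \le \|g\|_{\Lip}|\alpha| = \|f''\|_{L^\infty}|\alpha|$ — no, this reintroduces $\|f'''\|$. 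Let me reconsider: the correct mechanism (used for \eqref{eq:lower:bound}) is that we only need $|\delta_\alpha g(x)| \ge \tfrac12 |\alpha||g(x)|$ to hold for $|\alpha| \le \kappa$ where $\kappa$ depends on $g$ through $g(x)$ and on $\|f''\|_{L^\infty}$; we use $|\delta_\alpha g(x) - \alpha g(x)| = |\int_{x-\alpha}^x (g(z)-g(x))dz| \le \frac{\alpha^2}{?}$, but to make this small we instead integrate by parts once: $g(z) - g(x) = \int_x^z g'(w)\,dw$ does not help either since $g' = f''''$. The actual resolution, as in~\cite{ConstantinVicol12}, is to choose the cutoff $\kappa = \kappa(|g(x)|, \|f''\|_{L^\infty})$ so that on $|\alpha| \le \kappa$ one has $|\delta_\alpha f(x)| = |\int_{x-\alpha}^x f'(z)dz|$ controlled using that $f'$ varies slowly because $\|f''\|_{L^\infty}$ bounds its modulus — precisely, $|\delta_\alpha f(x) - \alpha f'(x)| \le \tfrac12 \|f''\|_{L^\infty}\alpha^2$. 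This keeps the denominator small \emph{relative to the numerator} on a window whose length is $\sim |g(x)|/\|f''\|_{L^\infty}$, which after integration produces the cubic power $|g(x)|^3$.

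Concretely, the steps are: (1) choose $\kappa \sim |f'''(x)| / \|f''\|_{L^\infty}$ (with an explicit constant tuned to give $24$); (2) on $|\alpha| \le \kappa$, show $|\delta_\alpha f'''(x)| \ge \tfrac12 |\alpha||f'''(x)|$ is \emph{not} what one uses — rather, one uses the Chebyshev-type argument: either $|\delta_\alpha f'''(x)|$ is already large (good, contributes), or it is small, in which case $\Delta_\alpha f''(x) \approx f'''(x)$, contradicting smallness once $|\alpha|$ is below the window; (3) on that window bound the denominator $\alpha^2 + (\delta_\alpha f(x))^2 \le (1+B^2)\alpha^2$; (4) integrate $\int_{|\alpha|\le\kappa} \tfrac{|f'''(x)|^2}{4(1+B^2)}\,d\alpha = \tfrac{\kappa}{2(1+B^2)}|f'''(x)|^2 \gtrsim \tfrac{|f'''(x)|^3}{\|f''\|_{L^\infty}(1+B^2)}$; (5) track constants to land on the factor $\tfrac{1}{24}$. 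I expect the main obstacle to be purely bookkeeping: picking the cutoff $\kappa$ and the threshold in the Chebyshev split so that all the absorbed constants multiply out to exactly $24$, just as in the proof of \eqref{eq:lower:bound}; there is no new analytic difficulty beyond what Lemma~\ref{lem:lower:Lip} already requires, since the estimate is structurally identical with the substitution $(f'', f') \rightsquigarrow (f''', f'')$ and $B \rightsquigarrow \|f''\|_{L^\infty}$ in the numerator's role.
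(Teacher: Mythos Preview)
Your closing sentence is correct: the proof really is structurally identical to that of Lemma~\ref{lem:lower:Lip} under the substitution $(f'',f') \rightsquigarrow (f''',f'')$, and the paper says exactly this. However, the mechanism you describe in the body of the proposal is not the mechanism of Lemma~\ref{lem:lower:Lip}, and as written it does not work.

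You attempt to work on the \emph{inner} region $|\alpha|\le\kappa$ and to show that the integrand is bounded below by a constant times $|f'''(x)|^2/(1+B^2)$ there. But for small $\alpha$ one has $\delta_\alpha f'''(x)\approx \alpha f''''(x)$, so $(\delta_\alpha f'''(x))^2/\alpha^2$ behaves like $|f''''(x)|^2$, not $|f'''(x)|^2$; no amount of tinkering with the denominator via $\|f''\|_{L^\infty}$ fixes this. The Chebyshev-type split you sketch likewise cannot produce a pointwise lower bound on the inner region without information on $f''''$ or on the oscillation of $f'''$.

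The actual argument of Lemma~\ref{lem:lower:Lip}, which is what you should replicate, works on the \emph{outer} region. One first uses $|\delta_\alpha f|\le B|\alpha|$ to pass to $\DD[f''']$, then inserts a cutoff $\chi(\alpha/r)$ supported on $\{|\alpha|\ge r/2\}$ and expands $(\delta_\alpha f''')^2\ge |f'''(x)|^2-2f'''(x)f'''(x-\alpha)$. The diagonal term gives $2|f'''(x)|^2/r$. For the cross term one writes $f'''(x-\alpha)=-\partial_\alpha f''(x-\alpha)$ and integrates by parts once, throwing the derivative onto $\alpha^{-2}\chi(\alpha/r)$; what remains involves only $|f''(x-\alpha)|\le\|f''\|_{L^\infty}$ and is bounded by $24\|f''\|_{L^\infty}|f'''(x)|/r^2$. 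Optimizing with $r=24\|f''\|_{L^\infty}/|f'''(x)|$ yields \eqref{eq:lower:fppp}. This is where $\|f''\|_{L^\infty}$ enters: as the sup-norm of the \emph{antiderivative} of $f'''$, via integration by parts on the outer piece, not via any Taylor expansion on an inner window.
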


\begin{proof}[Proof of Lemma~\ref{lem:lower:Lip}]
For this purpose, let $r>0$ to be chosen later, and let $\chi$ be an even cutoff function, with $\chi = 1$ on $[1,\infty)$, $\chi =0$ on $[0,1/2]$, and $\chi'=2$ on $(1/2,1)$. We then have
\begin{align}\label{eq:proof:coro}
\DD_f [f''](x)
&\geq \frac{1}{1+B^2}\DD[f''](x)\geq \frac{1}{1+B^2}\int_{\RR} \frac{|f''(x)|^2 - 2 f''(x) f''(x-\alpha)}{\alpha^2} \chi\left(\frac{\alpha}{r}\right)d\alpha \notag\\
&\geq \frac{1}{1 + B^2}\Big( |f''(x)|^2 \int_{|\alpha|\geq r} \frac{d\alpha}{\alpha^2} - 2 |f''(x)| \Big| \int_{\RR} \frac{\partial_\alpha f'(x-\alpha)}{\alpha^2}\chi\left(\frac{\alpha}{r}\right) d\alpha \Big|\Big).
\end{align}
Integration by parts yields
\begin{align*}
\DD_f [f''](x)&\geq \frac{1}{(1+B^2)}\Big(\frac{2|f''(x)|^2}{r} - 4 |f''(x)| \int_{|\alpha| \geq r/2}  \frac{|f'(x-\alpha)|}{|\alpha|^3}d\alpha\notag\\
& \qquad\qquad\qquad\qquad\qquad- 4|f''(x)| \frac{1}{r}\int_{r/2\leq |\alpha|\leq r} \frac{|f'(x-\alpha)|}{\alpha^2}d\alpha\Big) \notag\\
&\geq \frac{1}{(1+B^2)}\Big(\frac{2 |f''(x)|^2}{ r} - \frac{24B|f''(x)|}{r^2}\Big).
\end{align*}
Letting
\[
r = \frac{24 B}{|f''(x)|}
\]
in the above estimate, we arrive at the bound \eqref{eq:lower:bound}.

Let $p \in (1,2)$. In order to prove \eqref{eq:lower:bound:Dp} we appeal to the inequality
\[
|r-s|^p \geq |r|^p\left(1 - p \frac{s}{r} \right)
\]
which holds for any $r,s \in \RR$ and all $p >1$.
It follows that
\begin{align}
\DD_f^p [f''](x) &\geq \frac{1}{1+B^2} \DD^p[f''](x)
\geq \frac{1}{1+B^2} \int_{\RR} \frac{|f''(x) - f''(x-\alpha)|^p}{\alpha^2} \chi\left(\frac{\alpha}{r}\right) d\alpha \notag\\
&\geq \frac{1}{(1+B^2)} \int_{|\alpha|\geq r} \frac{|f''(x)|^p}{\alpha^2} d\alpha - \frac{p |f''(x)|^{p-1}}{(1+B^2)}\left| \int_{\RR} \frac{\partial_\alpha f'(x-\alpha)}{\alpha^2} \chi\left(\frac{\alpha}{r}\right) d\alpha\right| \notag\\
&\geq \frac{1}{1+B^2} \frac{|f''(x)|^p}{r} - \frac{24}{1+B^2} \frac{B |f''(x)|^{p-1}}{r^2}.
\label{eq:lower:bound:proof:Dp}
\end{align}
Upon choosing
\[
r = \frac{48 B }{|f''(x)|}
\]
the proof of the Lemma is completed.
\end{proof}

\begin{proof}[Proof of Lemma~\ref{lem:lower:Lip:MOC}] Similarly to the above proof, we bound $\DD_f$ from below as
\begin{align*}
\DD_f [f''](x)
&\geq \frac{1}{(1+B^2)}\Big(\frac{2|f''(x)|^2}{r} - 2 |f''(x)| \Big| \int_{\RR} \frac{\partial_\alpha \left( f'(x-\alpha) - f'(x) \right)}{\alpha^2} \chi\left(\frac{\alpha}{r}\right) d\alpha \Big|\Big) \notag\\
&\geq \frac{1}{(1+B^2)}\Big(\frac{2|f''(x)|^2}{r} - 4|f''(x)| \int_{|\alpha| \geq r/2}  \frac{|\delta_\alpha f'(x)|}{|\alpha|^3}d\alpha \notag \\
&\qquad \qquad \qquad \qquad\qquad - 4 |f''(x)| \frac{1}{r} \int_{r/2 \leq |\alpha|\leq r} \frac{|\delta_\alpha f'(x)|}{\alpha^2} d\alpha\Big) \notag\\
&\geq \frac{1}{(1+B^2)}\Big(\frac{2 |f''(x)|^2}{r} - 8|f''(x)| \int_{r/2}^\infty \frac{\rho(\alpha)}{\alpha^3} d\alpha - 8 |f''(x)| \frac{1}{r} \int_{r/2}^{r} \frac{\rho(\alpha)}{\alpha^2} d\alpha\Big) \notag\\
&\geq \frac{1}{(1+B^2)}\Big(\frac{2 |f''(x)|^2}{ r} - 12 |f''(x)| \int_{r/2}^\infty \frac{\rho(\alpha)}{\alpha^3} d\alpha\Big).
\end{align*}
In this case we choose $r = r( |f''(x)|)$ as the smallest $r>0$ which solves
\begin{align}
\frac{|f''(x)|}{6 } = r \int_{r/2}^\infty \frac{\rho(\alpha)}{\alpha^3} d\alpha. \label{eq:r:optimal:def}
\end{align}
The existence of such an $r$ is guaranteed by the intermediate value theorem, and by computing the  limits at $r \to 0+$ and $r\to\infty$ of the right side of \eqref{eq:r:optimal:def}. Indeed, these limits are
\[
\lim_{r \to 0} r \int_{r/2}^\infty \frac{\rho(\alpha)}{\alpha^3} d\alpha = + \infty
\]
since $\rho$ is not Lipschitz, and
\[
\lim_{r \to \infty} r \int_{r/2}^\infty \frac{\rho(\alpha)}{\alpha^3} d\alpha = 0.
\]
 Note that with this choice we have
\begin{align}
\lim_{y\to \infty} r(y) = 0.
\label{eq:r:asymptotics}
\end{align}
With the choice \eqref{eq:r:optimal:def}, the lower bound obtained on $\DD_f [f'']$ becomes
\begin{align}
\DD_f[f''](x) \geq L_B(|f''(x)|),
\label{eq:optimal:lower:bound:*}
\end{align}
where the function $L_B(y)$ is defined on $(0,\infty)$ implicitly by
\begin{align}
L_B(y) = \frac{y^2}{(1+B^2) r(y)}, \qquad \mbox{with} \qquad r(y) \int_{r(y)/2}^\infty  \frac{ \rho(\alpha) }{\alpha^3} d\alpha = \frac{y}{6}.
\label{eq:L:B:def}
\end{align}
A short computation shows that since $r(y)$ is  Lipschitz continuous on $(0,\infty)$, which implies that $L_B$ is also Lipschitz continuous on this interval. The important aspect to notice is that since $\rho(0) = 0$, we have
\begin{align}
\lim_{y\to \infty} \frac{L_B(y)}{y^3} = \infty,
\label{eq:proof:better:*}
\end{align}
which means that the lower bound \eqref{eq:optimal:lower:bound:*} is sharper than \eqref{eq:lower:bound}, when $|f''(x)| \gg 1$.

In order to prove \eqref{eq:proof:better:*}, note that
\[
\frac{L_B(y)}{y^3} = \frac{1}{(1+B^2) y r(y)}
\]
and
\[
\frac{y r(y)}{6} = r(y)^2 \int_{r(y)/2}^\infty  \frac{ \rho(\alpha) }{\alpha^3} d\alpha.
\]
Therefore, in view of \eqref{eq:r:asymptotics}, the limit in \eqref{eq:proof:better:*} indeed diverges if we establish that
\begin{align}
\lim_{r \to 0^+} r^2 \int_{r/2}^{\infty} \frac{\rho(\alpha)}{\alpha^3} d\alpha = 0.
\label{eq:lower:to:check}
\end{align}
Clearly, it is sufficient to verify that
\[
\lim_{r \to 0^+} r^2 \int_{r}^{\sqrt{r}} \frac{\rho(\alpha)}{\alpha^3} d\alpha = 0.
\]
Indeed, since $\rho$ is a modulus of continuity of $f'$ and we have that $\|f'\|_{L^\infty} \leq B$, we obtain
\[
r^2 \int_{\sqrt{r}}^{\infty} \frac{\rho(\alpha)}{\alpha^3} d\alpha \leq 2B r^2  \int_{\sqrt{r}}^{\infty} \frac{1}{\alpha^3} d\alpha \leq Br \to 0 \qquad \mbox{as} \qquad r\to 0.
\]
On the other hand, it is easy to see from the monotonicity of $\rho$ that
\[
r^2 \int_{r}^{\sqrt{r}} \frac{\rho(\alpha)}{\alpha^3} d\alpha  \leq \rho(\sqrt{r}) r^2 \int_{r}^{\infty} \frac{d\alpha}{\alpha^3} = \frac{\rho(\sqrt{r})}{2} \to 0\qquad \mbox{as} \qquad r\to 0
\]
since $\rho(0) = 0$.
\end{proof}

\begin{proof}[Proof of Lemma ~\ref{lem:lower:fpp:Lp}]
In order to prove \eqref{eq:lower:fpp:Lp} we proceed as in \eqref{eq:proof:coro}, but we do not integrate by parts in the second term. This yields
\begin{align*}
\DD_f[f''](x) &\geq \frac{2}{1+B^2} \frac{|f''(x)|^2}{r} -  \frac{2}{1+B^2} |f''(x)| \|f''\|_{L^p} \left( \int_{|\alpha|\geq r/2} \frac{1}{|\alpha|^{2p/(p-1)}} \right)^{(p-1)/p} \notag\\
&\geq \frac{2}{1+B^2} \frac{|f''(x)|^2}{r} -  \frac{8}{1+B^2}  \frac{|f''(x)| \|f''\|_{L^p}}{r^{(p+1)/p}}
\end{align*}
for all $p >1$. The desired inequality follows upon choosing
\[
r= \frac{8^p \|f''\|_{L^p}^p}{|f''(x)|^p}.
\]

Similarly, for $p\in(1,2)$ in order to prove \eqref{eq:lower:fpp:Dp:Lp},  we consider \eqref{eq:lower:bound:proof:Dp}, but we do not integrate by parts in the second term.
We obtain
\[
\DD_f^p[f''](x) \geq \frac{1}{1+B^2} \frac{|f''(x)|^p}{r} - \frac{4}{1+B^2} \frac{|f''(x)|^{p-1} \|f''\|_{L^p}}{r^{(p+1)/p}}.
\]
Choosing exactly as above concludes the proof of the Lemma.
\end{proof}

\begin{proof}[Proof of Lemma~\ref{lem:lower:fppp}]
We consider the inequality analogous to \eqref{eq:proof:coro} with $f''$ replaced by $f'''$. After integrating by parts once, the same argument used to prove Lemma~\ref{lem:lower:Lip} yields \eqref{eq:lower:fppp}.
\end{proof}

\section{Bounds for the nonlinear terms}
In this section we give pointwise in $x$ bounds for the nonlinear terms $T_i(x)$, with $i \in \{1,\ldots,5\}$ appearing on the right sides of \eqref{eq:f'':2} and \eqref{eq:f'':2:div}.
We first  fix a small constant
\[
\eps  \in (0,1]
\]
to be chosen later. The bounds we obtain depend on this $\eps$, and $\eps$ will be chosen differently in the proofs of Theorem~\ref{thm:local}, Theorem \ref{thm:cond}, and Theorem~\ref{thm:small} respectively.
The main result of this section is:
\begin{lemma}
\label{lem:T:1:2:3:4:5}	
Let $B > 0$ be such that \eqref{eq:Lip:cond} holds and fix $\eps \in (0,1]$. There exists a positive universal constant $C>0$ such that the bounds
\begin{align}
 |T_1(x)| + |T_2(x)| + |T_3(x)| + |T_4(x)|  &\leq \frac{CB}{\eps^2}  |f''(x)|^{2} + C \eps B^2 \frac{\DD[f''](x)}{|f''(x)|}
\label{eq:T:1:2:3:4} \\
|T_5(x)| &\leq C B |\Lambda f'(x)| + \frac{C B}{\eps}  |f''(x)| + C \eps^2 B^2 \frac{\DD[f''](x)}{|f''(x)|^2}
\label{eq:T:5}
\end{align}
hold for all $x \in \RR$ where $f''(x) \neq 0$.
\end{lemma}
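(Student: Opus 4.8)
The plan is to estimate each of the five nonlinear integrals $T_1,\ldots,T_5$ in \eqref{eq:f'':1} and \eqref{eq:T5:def} by splitting the $\alpha$-integral into an inner region $|\alpha| \leq \rho_0$ and an outer region $|\alpha| \geq \rho_0$ for a cutoff $\rho_0$ to be chosen as a suitable multiple of $\eps |f''(x)|^{-1}$. In the inner region one exploits cancellation via the Taylor-expansion remainders $\RSZ_1[f''], \RSZ_2[f'']$ introduced in \eqref{eq:R1:def}--\eqref{eq:R2:def}: every occurrence of $\delta_\alpha f(x) - \alpha f'(x)$ is rewritten using $\RSZ_2[f''](x,\alpha) - \tfrac{\alpha}{2}f''(x)\cdot(\ldots)$ appropriately, every $\delta_\alpha f'(x)$ is written as $\alpha f''(x) + \alpha \RSZ_1[f''](x,\alpha)$, and the denominators are bounded below by $\alpha^2$. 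The $\alpha$-powers then become integrable near $0$, and one applies the pointwise bounds \eqref{eq:R1} and \eqref{eq:R2}, i.e. $|\RSZ_1[f'']| \leq C|\alpha|^{1/2}(\DD[f''](x))^{1/2}$ and $|\RSZ_2[f'']| \leq C|\alpha|^{3/2}(\DD[f''](x))^{1/2}$, together with $|\Delta_\alpha f| \leq B$ from \eqref{eq:f':bnd} to control the remaining factors; a Cauchy--Schwarz step in $\alpha$ over $|\alpha|\leq \rho_0$ converts the half-power gains into the factor $\DD[f''](x)$, with the leftover powers of $\rho_0$ producing $\eps B^2 \DD[f''](x)/|f''(x)|$ after the choice of $\rho_0$. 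In the outer region $|\alpha| \geq \rho_0$ one simply uses $|\delta_\alpha f(x) - \alpha f'(x)| \leq 2B|\alpha|$, $|\delta_\alpha f'(x)|\leq 2B$, $|\delta_\alpha f''(x)| \leq 2\|f''\|$—but to avoid $\|f''\|_{L^\infty}$ one instead keeps $\delta_\alpha f''$ paired so it integrates against $\alpha^{-2}$ and is absorbed, or one bounds crude terms by $|f''(x)|$ plus a $\DD$-term—yielding $CB\eps^{-2}|f''(x)|^2$ after inserting $\rho_0 \sim \eps/|f''(x)|$.

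For $T_5$ the structure is different because of the extra derivative: one writes $\delta_\alpha f'(x) = \alpha f''(x) + \alpha\RSZ_1[f''](x,\alpha)$ in \eqref{eq:T5:def}, giving three pieces. The piece with $\alpha f''(x)$ times $\Delta_\alpha f(x)(1+(\Delta_\alpha f(x))^2)^{-2}$ integrated against $\alpha^{-2}$, after subtracting the value of $\Delta_\alpha f$ at $\alpha = \infty$ (which is $0$), reconstructs (a finite-slope-weighted version of) $\Lambda f'(x)$, contributing $CB|\Lambda f'(x)|$; more precisely one notes $\Delta_\alpha f(x)(1+(\Delta_\alpha f(x))^2)^{-2}$ differs from $\Delta_\alpha f(x)$ by a factor bounded by $CB^2$ and a smooth odd symbol, and the principal-value integral of $f''(x)\,\Delta_\alpha f(x)/\alpha^2$ is exactly $f''(x)\,\Lambda f'(x)/(\ldots)$; care with oddness/P.V. is needed here. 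The remaining $\RSZ_1$-piece is handled exactly as in the $T_1,\ldots,T_4$ analysis: split at $\rho_0$, use \eqref{eq:R1} and Cauchy--Schwarz on the inner part to get $\eps^2 B^2 \DD[f''](x)/|f''(x)|^2$, and bound the outer part crudely by $CB\eps^{-1}|f''(x)|$. The powers of $\eps$ differ from the $T_1$--$T_4$ bound because $T_5$ is multiplied by $|f''|^2$ rather than $|f''|$ in \eqref{eq:f'':2:div}, so one balances a different homogeneity—this is why the statement has $\eps$ in \eqref{eq:T:1:2:3:4} but $\eps^2$ in \eqref{eq:T:5}.

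The main obstacle I expect is bookkeeping: correctly tracking which combination of $\delta_\alpha f$, $\delta_\alpha f'$, $\delta_\alpha f''$ appears in each $T_i$, choosing the right Taylor remainder ($\RSZ_1$ versus $\RSZ_2$) for each factor so that the net power of $|\alpha|$ in the inner integral is strictly greater than $-1$ (so Cauchy--Schwarz closes), and simultaneously ensuring the outer integral does not need $\|f''\|_{L^\infty}$. The delicate case is $T_4$, which has the highest-order denominator $((\delta_\alpha f)^2 + \alpha^2)^3$ and the product $(\delta_\alpha f(x)-\alpha f'(x))(\delta_\alpha f(x))^2(\delta_\alpha f'(x))^2$: here one must use $|\RSZ_2[f'']| \leq C|\alpha|^{3/2}(\DD[f''])^{1/2}$ on the first factor, $|\delta_\alpha f| \leq B|\alpha|$ on the middle, and $|\delta_\alpha f'| \leq |\alpha||f''(x)| + C|\alpha|^{3/2}(\DD[f''])^{1/2}$ on the last two, expand, and verify every resulting term is integrable near $\alpha=0$ with the claimed $\eps$-weights—the $B^3$-type factors all collapse to $B$ because each extra $(\delta_\alpha f)^2/((\delta_\alpha f)^2+\alpha^2)$ is $\leq 1$, leaving only one net power of $B$ from the lone $|\delta_\alpha f|\leq B|\alpha|$ that is not paired with a compensating $\alpha^2$ in the denominator. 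Once the inner/outer split and the choice $\rho_0 = c\,\eps/|f''(x)|$ are fixed, all five bounds follow by the same mechanism, and summing $T_1+\cdots+T_4$ gives \eqref{eq:T:1:2:3:4} while $T_5$ gives \eqref{eq:T:5}.
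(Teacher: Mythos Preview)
Your plan for $T_1,\ldots,T_4$ is essentially the paper's: the same inner/outer split at a scale $\eta$ proportional to $1/|f''(x)|$, the same use of the remainder bounds \eqref{eq:R1}--\eqref{eq:R2}, and the same Young-inequality balancing. One small correction: the cutoff must carry a factor of $B$, i.e.\ $\eta=\eps B/|f''(x)|$ rather than $\eps/|f''(x)|$, otherwise the powers of $B$ in the two terms of \eqref{eq:T:1:2:3:4} do not come out as stated.

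The genuine gap is in your treatment of $T_5$. Splitting $\delta_\alpha f'(x)=\alpha f''(x)+\alpha\,\RSZ_1[f''](x,\alpha)$ produces, as the leading piece,
\[
2f''(x)\,P.V.\!\int_{\RR}\frac{\Delta_\alpha f(x)}{(1+(\Delta_\alpha f(x))^2)^2}\,\frac{d\alpha}{\alpha},
\]
which is \emph{not} a multiple of $\Lambda f'(x)$. This integral is of Calder\'on--commutator type (structurally the same as $v(x)$ in \eqref{eq:v:def}) and is not pointwise bounded under the sole hypothesis $\|f'\|_{L^\infty}\le B$; nor does it reduce to $\Lambda f'(x)$ or $\Lambda f(x)$ times a bounded factor. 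Your sentence identifying it with ``$f''(x)\,\Lambda f'(x)/(\ldots)$'' is simply incorrect: $\Lambda f'(x)=P.V.\int\delta_\alpha f'(x)/\alpha^2\,d\alpha$ involves $f'$, whereas what remains after your split involves only $\Delta_\alpha f$.

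The paper extracts $\Lambda f'$ by the opposite maneuver: it leaves $\delta_\alpha f'(x)/\alpha^2$ intact and instead freezes the \emph{coefficient} $\Delta_\alpha f/(1+(\Delta_\alpha f)^2)^2$ at its $\alpha\to 0$ value $f'(x)/(1+(f'(x))^2)^2$. This pulls out exactly
\[
T_{5,pv}(x)=\frac{2f'(x)}{(1+(f'(x))^2)^2}\,\Lambda f'(x),
\]
which is bounded by $CB|\Lambda f'(x)|$. The remainder $T_5-T_{5,pv}$ then carries an explicit factor $\Delta_\alpha f(x)-f'(x)=\RSZ_1[f'](x,\alpha)$, and it is this extra factor that makes the inner/outer analysis close with the $\eps^2$ weight you anticipated.
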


In addition, we need a pointwise estimate for the nonlinear terms in terms of the dissipative term $\DD^p[f'']$, when $p \in (1,2)$.

\begin{lemma}
\label{lem:T:1:2:3:4:5:p}
Let $B > 0$ be such that \eqref{eq:Lip:cond} holds, let $p \in (1,2)$, and fix $\eps \in (0,1]$. There exists a positive universal constant $C>0$ such that the bounds
\begin{align}
 |T_1(x)| + |T_2(x)| &+ |T_3(x)| + |T_4(x)|  \leq \frac{C B}{\eps^{2/(p-1)}}  |f''(x)|^2 + C \eps B^2 \frac{\DD^p[f''](x)}{|f''(x)|^{p-1}}
\label{eq:T:1:2:3:4:p} \\
|T_5(x)| &\leq C B |\Lambda f'(x)| + \frac{C B}{\eps^p}   |f''(x)| + C \eps B^2 \frac{\DD^p[f''](x)}{|f''(x)|^{p}}
\label{eq:T:5:p}
\end{align}
hold for all $x \in \RR$ where $f''(x) \neq 0$.
\end{lemma}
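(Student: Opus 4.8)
The plan is to mirror the proof of Lemma~\ref{lem:T:1:2:3:4:5} (which I assume is already available), replacing every application of the Cauchy--Schwarz/quadratic dissipation bound by the $p$-analogue based on $\DD^p[f'']$, i.e.\ by the Taylor-remainder estimates \eqref{eq:R1:p} and \eqref{eq:R2:p} in place of \eqref{eq:R1} and \eqref{eq:R2}. First I would rewrite each $T_i$ using the expansions $\delta_\alpha f = \alpha f' - \tfrac{\alpha^2}{2} f'' + \alpha \RSZ_2[f'']$, $\delta_\alpha f' = \alpha f'' + \alpha \RSZ_1[f'']$, and the bound $|\delta_\alpha f(x)|^2 + \alpha^2 \geq \alpha^2$ together with $|\Delta_\alpha f| \le B$ from \eqref{eq:f':bnd}. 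The numerators in $T_1,\dots,T_4$ all carry the algebraically favorable factor $\delta_\alpha f - \alpha f'$ or $\delta_\alpha f' - \alpha f''$, each of which equals $\alpha$ times a remainder term ($\RSZ_2$ or $\RSZ_1$ respectively); after cancelling powers of $\alpha$ against the denominator $((\delta_\alpha f)^2+\alpha^2)^k$, one is left with integrals of the schematic form
\[
\int_{\RR} \frac{|\RSZ_j[f''](x,\alpha)|\, |f''(x)|^{a}\, B^{b}}{\alpha^2 + (\delta_\alpha f)^2}\, d\alpha
\]
possibly with an extra $|\RSZ_1|$ or $|f''|$ factor. I would split each such integral at $|\alpha| = \eps/|f''(x)|$ (the natural length scale): on the inner region use \eqref{eq:R1:p}--\eqref{eq:R2:p} to extract $(\DD^p[f''])^{1/p}$ and integrate the remaining power of $\alpha$, which converges since $p>1$; on the outer region $|\alpha| \ge \eps/|f''(x)|$ use the crude bounds $|\RSZ_1[f'']| \le 2\|f''\|_\infty$... — actually more economically $|\RSZ_1[f']| \le 2B$ and $|\RSZ_2[f'']| \lesssim |\alpha|\,|f''(x)| + \dots$, together with $|\delta_\alpha f'| \le$ (a multiple of $|\alpha| |f''(x)|$ on the inner scale and $\le 2B$ globally), to get a convergent integral producing the $|f''(x)|^2$ term. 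Finally apply Young's inequality in the form $A^{1/p} C^{1-1/p} \le \eps' A + C_{\eps'} C$ to convert $(\DD^p[f''])^{1/p}$ into $\eps \, \DD^p[f'']/|f''|^{p-1}$ plus the polynomial remainder; tracking the $\eps$-powers through the splitting point and through Young is what produces the stated exponents $\eps^{-2/(p-1)}$ in \eqref{eq:T:1:2:3:4:p} and $\eps^{-p}$ in \eqref{eq:T:5:p}.

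For $T_5$ the structure is slightly different: from \eqref{eq:T5:def} one writes $\delta_\alpha f'(x)/\alpha^2 = f''(x)/\alpha + \RSZ_1[f''](x,\alpha)/\alpha$ and $\Delta_\alpha f /(1+(\Delta_\alpha f)^2)^2 = (\Delta_\alpha f - f')/(\dots) + f'/(\dots)$, i.e.\ one peels off the pieces that would be singular. The $f''(x)/\alpha$ times $f'$-main-part piece, after exploiting the odd symmetry of $d\alpha/\alpha$, recombines into $|\Lambda f'(x)|$ up to a bounded multiplier (this is where the $CB|\Lambda f'(x)|$ term comes from); the $f''(x)/\alpha$ times $(\Delta_\alpha f - f') = \RSZ_1[f']$ piece gives, after the same inner/outer split, $\tfrac{CB}{\eps^p}|f''(x)|$; and the $\RSZ_1[f'']/\alpha$ pieces give, via \eqref{eq:R1:p} and Young, the term $\eps B^2 \DD^p[f'']/|f''|^{p}$. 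I would handle the denominators using $|1+(\Delta_\alpha f)^2|^{-1} \le 1$ and $|\Delta_\alpha f| \le B$ throughout, which is why all constants are universal and the only $B$-dependence is polynomial.

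The main obstacle — and the only genuinely new point compared with the $p=2$ case — is bookkeeping the fractional exponents: with $\DD^p$ the remainder bounds \eqref{eq:R1:p}, \eqref{eq:R2:p} carry the non-conjugate exponents $1/p$ and $(p{+}1)/p$, so the inner-region integrals $\int_0^{\eps/|f''|} |\alpha|^{1/p - ?} d\alpha$ and the subsequent Young step do not have the clean square-root structure of Lemma~\ref{lem:T:1:2:3:4:5}; one must verify that every inner integral still converges (it does, because $p>1$ keeps all exponents of $\alpha$ above $-1$) and that balancing the $\eps$ powers between the split point $\eps/|f''|$ and the Young constant yields exactly $\eps^{-2/(p-1)}$ and $\eps^{-p}$ rather than some other power. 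I expect no conceptual difficulty beyond this, and the proof can be written in a few lines by citing the $p=2$ argument and indicating the three substitutions: $\RSZ_j[f'']$ estimates \eqref{eq:R1}--\eqref{eq:R2} $\to$ \eqref{eq:R1:p}--\eqref{eq:R2:p}, Cauchy--Schwarz $\to$ Hölder with exponents $p, p'$, and the final Young step with the new exponents.
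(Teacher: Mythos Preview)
Your overall strategy---mirror the $p=2$ proof replacing \eqref{eq:R1}--\eqref{eq:R2} by \eqref{eq:R1:p}--\eqref{eq:R2:p} and Cauchy--Schwarz by H\"older/Young---is the right one and is what the paper does, but there is a genuine obstacle you have not identified. In the inner region several integrands contain a \emph{product of two} remainder factors (for instance the $\RSZ_1[f'']\cdot\RSZ_1[f'']$ piece of $T_{1,in}$, the $\RSZ_2\cdot\RSZ_1$ piece of $T_{2,1,in}$, and similarly in $T_{5,in}$). Bounding each factor by \eqref{eq:R1:p}--\eqref{eq:R2:p} inevitably produces a contribution of size
\[
\bar\eps^{\,2/p}\,B^{(p+2)/p}\,\frac{(\DD^p[f''](x))^{2/p}}{|f''(x)|^{2/p}},
\]
where $\bar\eps$ is the parameter in the split radius $\eta=\bar\eps B/|f''(x)|$. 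For $p\in(1,2)$ one has $2/p>1$, and such a term \emph{cannot} be absorbed into $\eps B^2\,\DD^p[f'']/|f''|^{p-1}$ plus a polynomial in $|f''|$ by any Young inequality: writing it as $X^{2/p}Y^{(p-2)/p}$ with $X=\DD^p/|f''|^{p-1}$ and $Y=|f''|^2$ gives a negative exponent on $Y$. Your proposed step ``$A^{1/p}C^{1-1/p}\le\eps' A+C_{\eps'}C$'' handles the terms linear in a single remainder, but not these quadratic ones; so the claim that ``no conceptual difficulty'' remains is too optimistic.

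The paper's fix is a two-step argument. First one records the intermediate bound with the auxiliary parameter $\bar\eps$ left free (this is \eqref{eq:T:1:2:3:4:p:p}, respectively \eqref{eq:T:5:p:p}). Then $\bar\eps$ is chosen \emph{adaptively at each point} $x$, depending on whether $B^{2/p}(\DD^p[f''](x))^{2/p}$ is below or above $|f''(x)|^{2(p+1)/p}$; in the latter case one takes $\bar\eps=|f''(x)|\,(B\,\DD^p[f''](x))^{-1/(p+1)}$, cf.\ \eqref{eq:bar:eps:choice:p}. This choice equalizes the two competing terms and lowers the effective exponent on $\DD^p$ from $2/p$ to $2/(p+1)<1$, after which the $\eps$-Young inequality with conjugate exponents $(p+1)/2$ and $(p+1)/(p-1)$ yields \eqref{eq:T:1:2:3:4:p} and \eqref{eq:T:5:p} with the stated powers $\eps^{-2/(p-1)}$ and $\eps^{-p}$. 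This adaptive cutoff is the one new idea needed beyond the $p=2$ case, and it should be incorporated into your argument.
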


\begin{proof}[Proof of Lemma~\ref{lem:T:1:2:3:4:5}]
Throughout  this proof, we fix a cutoff radius (for $\alpha$)
\begin{align}
\eta = \eta(x) = \frac{\eps B}{|f''(x)|}.
\label{eq:eta:def}
\end{align}
Note that at the points $x$ where $f''(x) = 0$, there is no estimate to be done for $T_i(x)$ terms as they are multiplied with $f''(x)$ in \eqref{eq:f'':2} and  \eqref{eq:f'':2:div}.

{\bf Estimate for the $T_1$ term.\ }
We decompose $T_1$ into an inner piece and an outer piece according to
\begin{align*}
T_1(x)
&= 4\; P.V. \int_{\RR} \frac{\left(\delta_\alpha f'(x) - \alpha f''(x)\right)\; \delta_\alpha f(x)\; \delta_\alpha f'(x)}{\left( (\delta_\alpha f(x))^2 +\alpha^2 \right)^2} d\alpha \notag\\
&= 4\; P.V. \int_{|\alpha| \leq \eta} \frac{\RSZ_1[f''](x,\alpha) \; \Delta_\alpha f(x) \; \left(f''(x) + \RSZ_1[f''](x,\alpha) \right)}{\left( (\Delta_\alpha f(x))^2 +1 \right)^2 \alpha} d\alpha \notag\\
&\qquad + 4  \int_{|\alpha|> \eta} \frac{\RSZ_1[f''](x,\alpha) \; \Delta_\alpha f(x) \; \delta_\alpha f'(x)}{\left( (\Delta_\alpha f(x))^2 + 1 \right)^2 \alpha^2} d\alpha \notag\\
&=: T_{1,in}(x) + T_{1,out}(x).
\end{align*}
Using the pointwise in $x$ and $\alpha$ bounds \eqref{eq:f':bnd} and \eqref{eq:R1}, and the definition of $\eta$ in \eqref{eq:eta:def}, we obtain
\begin{align}
|T_{1,in}(x)|
&\leq C B |f''(x)| ( \DD[f''](x) )^{1/2} \int_{|\alpha| \leq \eta} \frac{d\alpha}{|\alpha|^{1/2}}
+ C B \DD[f''](x)  \int_{|\alpha| \leq \eta} d\alpha \notag\\
&\leq C \eps^{1/2} B^{3/2}  |f''(x)|^{1/2} ( \DD[f''](x) )^{1/2}
+ C \eps B^2  \frac{\DD[f''](x)}{|f''(x)|} \notag\\
&\leq C B  |f''(x)|^{2}
+ C \eps B^2  \frac{\DD[f''](x)}{|f''(x)|}
\label{eq:T:1:in}
\end{align}
and
\begin{align}
|T_{1,out}(x)| &\leq C B^2 (\DD[f''](x))^{1/2} \int_{|\alpha|>\eta} \frac{d\alpha}{|\alpha|^{3/2}} \notag\\
&\leq C \eps^{-1/2} B^{3/2} |f''(x)|^{1/2} (\DD[f''](x))^{1/2} \notag\\
&\leq C \eps^{-2} B  |f''(x)|^{2}  + C \eps B^2  \frac{\DD[f''](x)}{|f''(x)|}
\label{eq:T:1:out}
\end{align}
for some universal constant $C>0$. Combining \eqref{eq:T:1:in} and \eqref{eq:T:1:out} we arrive at
\begin{align}
2 \left|f''(x) T_1(x) \right| \leq C \eps^{-2} B |f''(x)|^{3} + C \eps B^2 \DD[f''](x)
\label{eq:T:1:final}
\end{align}
for some universal $C>0$.  This bound is consistent with \eqref{eq:T:1:2:3:4}.

{\bf Estimate for the $T_2$ term.\ }
We decompose $T_2$ as
\begin{align*}
T_2 &= 2\; P.V. \int_{\RR}  \frac{(\Delta_\alpha f(x) - f'(x)) (\delta_\alpha f'(x))^2 }{\left( (\Delta_\alpha f(x))^2 + 1 \right)^2 \alpha^3}  d\alpha \notag\\
&=2\; P.V. \int_{\RR}  \frac{(-\alpha f''(x)/2 + \RSZ_2[f''](x,\alpha)) (\delta_\alpha f'(x))^2 }{\left( (\Delta_\alpha f(x))^2 + 1 \right)^2 \alpha^3}  d\alpha \notag\\
&= 2\; P.V. \int_{|\alpha|\leq\eta}  \frac{  \RSZ_2[f''](x,\alpha) (\delta_\alpha f'(x)) (f''(x) + \RSZ_1[f''](x,\alpha)) }{\left( (\Delta_\alpha f(x))^2 + 1 \right)^2 \alpha^2}  d\alpha \notag \\
&\qquad - f''(x) \; P.V. \int_{|\alpha|\leq\eta}  \frac{ (f''(x) + \RSZ_1[f''](x,\alpha))^2 }{\left( (\Delta_\alpha f(x))^2 + 1 \right)^2}  d\alpha \notag \\
&\qquad + 2  \int_{|\alpha|>\eta}  \frac{(-\alpha f''(x) /2 + \RSZ_2[f''](x,\alpha)) (\delta_\alpha f'(x))^2 }{\left( (\Delta_\alpha f(x))^2 + 1 \right)^2 \alpha^3}  d\alpha \notag\\
&=:T_{2,1,in} + T_{2,2,in} + T_{2,out}.
\end{align*}
By appealing to \eqref{eq:R2} and \eqref{eq:R1} we may estimate the inner terms as
\begin{align}
|T_{2,1,in}| &\leq C B  (\DD[f''](x))^{1/2} \left( |f''(x)| \int_{|\alpha| \leq \eta} \frac{d\alpha}{|\alpha|^{1/2}} + (\DD[f''](x))^{1/2} \int_{|\alpha| \leq \eta} d\alpha \right) \notag\\
&\leq C \eps^{1/2} B^{3/2} |f''(x)|^{1/2} (\DD[f''](x))^{1/2} + C  \eps B^2 \frac{\DD[f''](x) }{|f''(x)|} \notag\\
&\leq C B |f''(x)|^2 + C  \eps B^2 \frac{\DD[f''](x) }{|f''(x)|}
\label{eq:T:2:1:in}
\end{align}
and
\begin{align}
|T_{2,2,in}| &\leq C|f''(x)| \left( |f''(x)|^2 \int_{|\alpha|\leq \eta}\!\! d\alpha + |f''(x)| (\DD[f''](x))^{1/2} \int_{|\alpha|\leq \eta}\!\! |\alpha|^{1/2} d\alpha + \DD[f''](x) \int_{|\alpha|\leq \eta}\!\! |\alpha| d\alpha\right) \notag\\
&\leq C \eps B |f''(x)|^2 + C \eps^{3/2} B^{3/2} |f''(x)|^{1/2} (\DD[f''](x))^{1/2} + C \eps^2 B^{2}  \frac{\DD[f''](x)}{|f''(x)|}\notag\\
&\leq C B |f''(x)|^2 + C \eps B^{2} \frac{\DD[f''](x) }{|f''(x)|}
\label{eq:T:2:2:in}
\end{align}
while the outer terms may be bounded as
\begin{align}
|T_{2,out}|
&\leq C B^2 \left( |f''(x)| \int_{|\alpha|>\eta} \frac{d\alpha}{|\alpha|^2}+ (\DD[f''](x))^{1/2} \int_{|\alpha|>\eta} \frac{d\alpha}{|\alpha|^{3/2}} \right)\notag\\
&\leq C \eps^{-1} B  |f''(x)|^2 + C \eps^{-1/2} B^{3/2} |f''(x)|^{1/2} (\DD[f''](x))^{1/2} \notag\\
&\leq C \eps^{-2} B  |f''(x)|^2 + C  \eps B^2 \frac{\DD[f''](x) }{|f''(x)|}
\label{eq:T:2:out}
\end{align}
for some universal $C>0$. Combining \eqref{eq:T:2:1:in}, \eqref{eq:T:2:2:in}, \eqref{eq:T:2:out}, using the identity \eqref{eq:H:0.5} and the Cauchy-Schwartz inequality we arrive at
\begin{align}
2 \left|  f''(x) T_2(x)  \right| \leq C \eps^{-2} B |f''(x)|^{3} + C \eps B^2 \DD[f''](x)
\label{eq:T:2:final}
\end{align}
for some universal $C>0$. This bound is consistent with \eqref{eq:T:1:2:3:4}.

{\bf Estimate for the $T_3$ term.\ }
We bound $T_3$ as
\begin{align*}
|T_3| &= 2 \left| P.V. \int_{\RR} \frac{(\Delta_\alpha f(x) -  f'(x))}{\alpha}
\frac{\Delta_\alpha f(x)}{\left( (\Delta_\alpha f(x))^2 + 1 \right)^{2}}  \frac{\delta_\alpha f''(x)}{\alpha}d\alpha \right| \notag\\
&\leq  CB (\DD[f''](x))^{1/2} \left( \int_{\RR} \frac{(\Delta_\alpha f(x) -  f'(x))^2}{\alpha^2} \right)^{1/2} \notag\\
&\leq  CB(\DD[f''](x))^{1/2} \left( \int_{|\alpha|\leq \eta} \frac{(\alpha f''(x)/2 + \RSZ_2[f''](x,\alpha])^2}{\alpha^2} \right)^{1/2} \notag\\
&\qquad + CB (\DD[f''](x))^{1/2} \left( \int_{|\alpha|>\eta} \frac{(\RSZ_1[f'](x,\alpha))^2}{\alpha^2} \right)^{1/2}\notag\\
&=: T_{3,in} + T_{3,out}.
\end{align*}
Using \eqref{eq:R2} we may further estimate
\begin{align}
T_{3,in}
&\leq CB (\DD[f''](x))^{1/2}\left( \int_{|\alpha|\leq \eta} \frac{(\alpha f''(x)/2 + \RSZ_2[f''](x,\alpha]))^2}{\alpha^2} \right)^{1/2} \notag\\
&\leq CB (\DD[f''](x))^{1/2} \notag\\
&\qquad \times \left(|f''(x)|^2 \int_{|\alpha|\leq \eta} d\alpha + |f''(x)| (\DD[f''](x))^{1/2} \int_{|\alpha|\leq \eta} |\alpha|^{1/2} d\alpha + \DD[f''](x) \int_{|\alpha|\leq\eta} |\alpha| d\alpha   \right)^{1/2} \notag\\
&\leq CB (\DD[f''](x))^{1/2} \left(B \eps |f''(x)| + B^{3/2} \eps^{3/2} \frac{(\DD[f''](x))^{1/2}}{|f''(x)|^{1/2}} + B^2 \eps^2 \frac{\DD[f''](x)}{|f''(x)|^2} \right)^{1/2} \notag\\
&\leq CB \eps^{1/2} B^{1/2} |f''(x)|^{1/2} (\DD[f''](x))^{1/2} + C \eps B^2 \frac{ \DD[f''](x) }{|f''(x)|}\notag\\
&\leq CB |f''(x)|^2 + C \eps B^2 \frac{ \DD[f''](x) }{|f''(x)|}.
\label{eq:T:3:in}
\end{align}
Similarly, we have that
\begin{align}
T_{3,out} &\leq C B (\DD[f''](x))^{1/2} \left( \int_{|\alpha|>\eta} \frac{d\alpha}{\alpha^2} \right)^{1/2} \notag\\
&\leq C \eps^{-1/2} B^{3/2} |f''(x)|^{1/2} (\DD[f''](x))^{1/2}\notag\\
&\leq  C \eps^{-2}B|f''(x)|^2 + C \eps B^2 \frac{ \DD[f''](x) }{|f''(x)|}.
\label{eq:T:3:out}
\end{align}
Combining \eqref{eq:T:3:in}--\eqref{eq:T:3:out}, leads to the estimate
\begin{align}
2 \left|  f''(x) T_3(x) \right| \leq C \eps^{-2}B |f''(x)|^3 + C \eps B^2 \DD[f''](x)
\label{eq:T:3:final}
\end{align}
for some positive universal constant $C>0$. This bound is consistent with \eqref{eq:T:1:2:3:4}.

{\bf Estimate for the $T_4$ term.\ }
We decompose $T_4$ as
\begin{align*}
T_4 &=  -8\; P.V. \int_{\RR} \frac{(\Delta_\alpha f(x) - f'(x))(\Delta_\alpha f(x))^2 (\delta_\alpha f'(x))^2}{\left( (\Delta_\alpha f(x))^2 +1 \right)^3 \alpha^3}
 d\alpha \notag\\
&= 4 f''(x) \; P.V. \int_{|\alpha|\leq \eta} \frac{ (\Delta_\alpha f(x))^2 (f''(x) + \RSZ_1[f''](x,\alpha))^2}{\left( (\Delta_\alpha f(x))^2 +1 \right)^3 }
 d\alpha \notag\\
&\qquad  -8\; P.V. \int_{|\alpha|\leq \eta} \frac{( \RSZ_2[f''](x,\alpha))(\Delta_\alpha f(x))^2 (\delta_\alpha f'(x)) (f''(x) + \RSZ_1[f''](x,\alpha))}{\left( (\Delta_\alpha f(x))^2 +1 \right)^3 \alpha^2 }
 d\alpha \notag\\
 &\qquad -8 \int_{|\alpha|>\eta} \frac{(\RSZ_1[f'](x,\alpha))(\Delta_\alpha f(x))^2 (\delta_\alpha f'(x))^2}{\left( (\Delta_\alpha f(x))^2 +1 \right)^3 \alpha^3}
 d\alpha \notag\\
 &=: T_{4,1,in} + T_{4,2,in} + T_{4,out}.
\end{align*}
Using \eqref{eq:R1} we may estimate
\begin{align}
|T_{4,1,in}|
&\leq C  |f''(x)| \notag\\
&\qquad \times \left(|f''(x)|^2 \int_{|\alpha|\leq \eta} d\alpha + |f''(x)| (\DD[f''](x))^{1/2} \int_{|\alpha|\leq \eta} |\alpha|^{1/2} d\alpha + \DD[f''](x) \int_{|\alpha|\leq \eta} |\alpha| d\alpha \right) \notag\\
&\leq C  |f''(x)| \left(\eps B |f''(x)| + \eps^{3/2} B^{3/2} \frac{(\DD[f''](x))^{1/2}}{|f''(x)|^{1/2}} + \eps^2 B^2 \frac{\DD[f''](x)}{|f''(x)|^2} \right)\notag\\
&\leq C \eps B |f''(x)|^2 + C \eps^2 B^2 \frac{\DD[f''](x)}{|f''(x)|}.
\label{eq:T:4:1:in}
\end{align}
By also appealing to \eqref{eq:R2} we obtain the bound
\begin{align}
|T_{4,2,in}|
&\leq C B (\DD[f''](x))^{1/2} \left( |f''(x)| \int_{|\alpha|\leq \eta} \frac{d\alpha}{|\alpha|^{1/2}} + (\DD[f''](x))^{1/2} \int_{|\alpha|\leq \eta} d\alpha \right) \notag\\
&\leq C \eps^{1/2} B^{3/2} |f''(x)|^{1/2} (\DD[f''](x))^{1/2} + C \eps B^2 \frac{\DD[f''](x)}{|f''(x)|}.
\label{eq:T:4:2:in}
\end{align}
On the other hand, for the outer term we obtain
\begin{align}
|T_{4,out}|
&\leq C B^3 \int_{|\alpha|>\eta} \frac{d\alpha}{|\alpha|^3} \leq C \eps^{-2} B |f''(x)|^2.
\label{eq:T:4:out}
\end{align}
Combining \eqref{eq:T:4:1:in}--\eqref{eq:T:4:out}, and using the Cauchy-Schwartz inequality we arrive at
\begin{align}
2 \left|  f''(x) T_4(x)  \right| \leq C \eps^{-2} B |f''(x)|^3 + C \eps B^2  \DD[f''](x)
\label{eq:T:4:final}
\end{align}
for some positive universal constant $C>0$. This bound is consistent with \eqref{eq:T:1:2:3:4}.

{\bf Estimate for the $T_5$ term.\ }
Recall that $T_5$ is computed by taking the derivative of \eqref{eq:v:def} as
\begin{align*}
T_5(x) &= \partial_x v(x) = - \frac{\partial}{\partial x} \left( P.V. \int_{\RR} \frac{1}{(\Delta_\alpha f(x))^2 + 1} \frac{d\alpha}{\alpha}\right) \notag\\
&= 2 P.V. \int_{\RR} \frac{\Delta_\alpha f(x)}{((\Delta_\alpha f(x))^2 + 1)^2} \frac{\delta_\alpha f'(x)}{\alpha^2} d\alpha.
\end{align*}
The main idea here is that in order to decompose $T_5$ into an inner and an outer term, we first need to subtract and add
\[
T_{5,pv}(x) = 2  \frac{f'(x)}{(f'(x))^2 +1)^2} P.V.\int_{\RR}\frac{\delta_\alpha f'(x)}{\alpha^2} d\alpha = 2 \frac{f'(x)}{(f'(x))^2 +1)^2} \Lambda f'(x).
\]
The contribution from this term is bounded by
\begin{align}
|T_{5,pv}(x)|  \leq C B |\Lambda f'(x)| = C B |H f''(x)|
\label{eq:T:5:pv}
\end{align}
where $H$ is the Hilbert transform. The difference is now decomposed further as
\[
T_5 - T_{5,pv} =  P.V. \int_{\RR} K_f(x,\al) \;(\Delta_\alpha f(x)-f'(x)) \frac{(\Delta_\alpha f'(x))}{\alpha} d\alpha,
\]
where
\begin{align*}
K_f(x,\al)=-2\frac{(\Delta_\alpha f(x))^3f'(x)+(\Delta_\alpha f(x))^2(f'(x))^2+f'(x)(\Delta_\alpha f(x))^3+2f'(x)\Delta_\alpha f(x)-1}{((\Delta_\alpha f(x))^2+1)^2(f'(x))^2+1)^2}.
\end{align*}
Next we use that
$$
|K_f(x,\al)|\leq 2, \qquad \mbox{for any value of } \Delta_\alpha f(x) \mbox{ and } f'(x).
$$
Decomposing further, it is possible to find
\begin{align*}
T_5 - T_{5,pv}&=P.V. \int_{|\alpha|<\eta} K_f(x,\al)\frac{(-\alpha f''(x)/2 + \RSZ_2[f''](x,\alpha))}{((\Delta_\alpha f(x))^2 + 1)^2} \; \frac{(f''(x) + \RSZ_1[f''](x,\alpha))}{\alpha} d\alpha \notag\\
&\qquad   +\int_{|\alpha|>\eta} K_f(x,\al)\frac{\RSZ_1[f'](x,\alpha)}{((\Delta_\alpha f(x))^2 + 1)^2} \; \frac{(\delta_\alpha f'(x))}{\alpha^2} d\alpha \notag\\
&=:T_{5,in} + T_{5,out}.
\end{align*}
For the outer term we directly obtain
\begin{align}
|T_{5,out}| &\leq C B^2 \int_{|\alpha|>\eta} \frac{d\alpha}{|\alpha|^2}  \leq \frac{C B}{\eps} |f''(x)|
\label{eq:T:5:out}.
\end{align}
We recall that $\eta=\eta(x)=\eps B |f''(x)|^{-1}$.
For the inner term, we appeal to \eqref{eq:R2} and \eqref{eq:R1} and obtain that
\begin{align}
|T_{5,in}|
&\leq |f''(x)|^2 \int_{|\alpha|<\eta} d\alpha + C |f''(x)| (\DD[f''](x))^{1/2} \int_{|\alpha|<\eta} |\alpha|^{1/2} d\alpha + C \DD[f''](x) \int_{|\alpha|<\eta} |\alpha| d\alpha \notag\\
&\leq C \eps B |f''(x)| + C \eps^{3/2}B^{3/2}(\DD[f''](x))^{1/2}  \frac{1}{|f''(x)|^{1/2}} + C\eps^2 B^2 \DD[f''](x) \frac{1}{|f''(x)|^2}
\label{eq:T:5:in}.
\end{align}
Summarizing, \eqref{eq:T:5:out} and \eqref{eq:T:5:in} and using the Cauchy-Schwartz inequality, we obtain the desired bound
\begin{align*}
|T_{5}(x) - T_{5,pv}(x)|  \leq C \frac{B}{\eps} |f''(x)| + C\eps^2 B^2 \frac{\DD[f''](x)}{|f''(x)|^2}
\end{align*}
which combined with \eqref{eq:T:5:pv} yields the desired bound \eqref{eq:T:5}.
\end{proof}

\begin{proof}[Proof of Lemma~\ref{lem:T:1:2:3:4:5:p}]

The proof closely follows that of Lemma~\ref{lem:T:1:2:3:4:5}, but uses \eqref{eq:R1:p}--\eqref{eq:R2:p} instead of \eqref{eq:R1}--\eqref{eq:R2}.
We let $\bar \eps \in (0,1]$ to be determined, and as in \eqref{eq:eta:def} let
\[
\eta = \eta(x) = \frac{\bar \eps B}{|f''(x)|}.
\]
In this proof the constant $C$ may change from line to line, and may depend on $p$, but not on $\bar \eps, B$, or $x$.

{\bf Estimate for the $T_1, T_2, T_3$, and $T_4$ terms.\ }
We first claim that
\begin{align}
|T_1(x)| + |T_2(x)| + |T_3(x)| + |T_4(x)| \leq \frac{C B}{\bar \eps^2} |f''(x)|^2 + C B^{(p+2)/p} \bar \eps^{2/p}  \frac{(\DD^p[f''](x))^{2/p}}{|f''(x)|^{2/p}}
\label{eq:T:1:2:3:4:p:p}
\end{align}
for some constant $C>0$, and all $x \in \RR$.

We verify this estimate by checking each term individually.
For $T_1$, similarly to \eqref{eq:T:1:in} and \eqref{eq:T:1:out}, by using \eqref{eq:R1:p}--\eqref{eq:R2:p} we have that
\begin{align*}
|T_{1,in}(x)| &\leq C \bar \eps^{1/p} B^{(p+1)/p} |f''(x)|^{(p-1)/p} (\DD^p[f''](x))^{1/p} + C B^{(p+2)/p} \bar \eps^{2/p}  \frac{(\DD^p[f''](x))^{2/p}}{|f''(x)|^{2/p}} \\
|T_{1,out}(x)| &\leq \frac{C B^{(p+1)/p}}{\bar \eps^{(p-1)/p}} |f''(x)|^{(p-1)/p} (\DD^p[f''](x))^{1/p}
\end{align*}
so that by Cauchy-Schwartz we obtain that $T_1$ is bounded as in \eqref{eq:T:1:2:3:4:p:p}.

For the $T_2$ term, as in \eqref{eq:T:2:1:in}, \eqref{eq:T:2:2:in}, and \eqref{eq:T:2:out}, but using \eqref{eq:R1:p}--\eqref{eq:R2:p}, we obtain
\begin{align*}
|T_{2,1,in}(x)| &\leq C \bar \eps^{1/p} B^{(p+1)/p} |f''(x)|^{(p-1)/p} (\DD^p[f''](x))^{1/p} + C B^{(p+2)/p} \bar \eps^{2/p}  \frac{(\DD^p[f''](x))^{2/p}}{|f''(x)|^{2/p}}\\
|T_{2,2,in}(x)| &\leq C  \bar \eps B  |f''(x)|^2 + C B^{(p+2)/p} \bar \eps^{(p+2)/p}  \frac{(\DD^p[f''](x))^{2/p}}{|f''(x)|^{2/p}} \\
|T_{2,out}(x)| &\leq \frac{C B}{\bar \eps} |f''(x)|^2 +  \frac{C  B^{(p+1)/p}}{\bar \eps^{(p-1)/p}} |f''(x)|^{(p-1)/p} (\DD^p[f''](x))^{1/p}
\end{align*}
so that by Cauchy-Schwartz it follows that $T_2$ obeys the bound \eqref{eq:T:1:2:3:4:p:p}.

For $T_3$, we proceed slightly differently from \eqref{eq:T:1:in} and \eqref{eq:T:1:out}, but still use \eqref{eq:R1:p}--\eqref{eq:R2:p} and obtain
\begin{align*}
|T_3(x)| &= 2 \left| P.V. \int_{\RR} \frac{\RSZ_1[f'](x,\alpha)}{|\alpha|^{2-2/p}} \frac{\Delta_\alpha f(x)}{((\Delta_\alpha f(x))^2 + 1)^2} \frac{\delta_\alpha f''(x)}{|\alpha|^{2/p}} d\alpha \right| \\
&\leq C B (\DD^p[f''](x))^{1/p} \left( \int_{\RR}  \frac{(\RSZ_1[f'](x,\alpha))^{p/(p-1)}}{|\alpha|^{2}} \right)^{(p-1)/p}\\
&\leq C B (\DD^p[f''](x))^{1/p} \left( \left( \int_{|\alpha|\leq \eta}  \frac{(\RSZ_2[f''](x,\alpha) + \alpha f''(x)/2)^{p/(p-1)}}{|\alpha|^{2}} \right)^{(p-1)/p} + B \eta^{-(p-1)/p} \right) \\
&\leq C B (\DD^p[f''](x))^{1/p} \left(|f''(x)| \eta^{1/p} + (\DD^p[f''](x))^{1/p}  \eta^{2/p} + B \eta^{-(p-1)/p} \right) \\
&\leq  \frac{C B^{(p+1)/p}}{\bar \eps^{(p-1)/p}}  |f''(x)|^{(p-1)/p} (\DD^p[f''](x))^{1/p} + C \bar \eps^{2/p} B^{(p+2)/p}  \frac{(\DD^p[f''](x))^{2/p}}{|f''(x)|^{2/p}}
\end{align*}
which is consistent with \eqref{eq:T:1:2:3:4:p:p}.

Lastly, for the $T_4$ term, we use bounds similar to \eqref{eq:T:4:1:in}, \eqref{eq:T:4:2:in}, and \eqref{eq:T:4:out}, combined with \eqref{eq:R1:p}--\eqref{eq:R2:p}, to deduce
\begin{align*}
|T_{4,1,in}|&\leq C |f''(x)| \left(\bar \eps B |f''(x)|  +\bar \eps^{(p+2)/p} B^{(p+2)/p} \frac{(\DD^p[f''](x,\alpha))^{2/p}}{|f''(x)|^{(p+2)/p}}  \right) \\
|T_{4,2,in}|&\leq C \bar \eps^{1/p} B^{(p+1)/p} |f''(x)|^{(p-1)/p} (\DD^p[f''](x))^{1/p} + C \bar \eps^{2/p} B^{(p+2)/p} \frac{(\DD^p[f''](x))^{2/p}}{|f''(x)|^{2/p}}\\
|T_{4,out}|&\leq  \frac{C B}{\bar \eps^2} |f''(x)|^2
\end{align*}
which concludes the proof of \eqref{eq:T:1:2:3:4:p:p}.

In order to prove \eqref{eq:T:1:2:3:4:p},  we now use \eqref{eq:T:1:2:3:4:p:p} in which we choose $\bar \eps$ depending on two cases:
\begin{align}
 \mbox{if} \
 \begin{cases}
 B^{2/p} (\DD^p[f''](x))^{2/p} \leq |f''(x)|^{2(p+1)/p},\\
 B^{2/p} (\DD^p[f''](x))^{2/p} \geq |f''(x)|^{2(p+1)/p},
 \end{cases}
 \  \mbox{then let} \
 \begin{cases}
 \bar \eps = 1,\\
 \bar \eps = |f''(x)| (B \DD^p[f''](x))^{-1/(p+1)}.
 \end{cases}
 \label{eq:bar:eps:choice:p}
 \end{align}
We thus obtain the desired estimate
\begin{align*}
|T_1(x)| +  |T_2(x)| + |T_3(x)| + T_4(x)|
&\leq C B \max \left\{ |f''(x)|^2, B^{2/(p+1)}  (\DD^p[f''](x))^{2/(p+1)}\right\}\notag\\
&\leq \frac{C \eps B^2 \DD^p[f''](x)}{|f''(x)|^{p-1}} + \frac{C B |f''(x)|^2}{\eps^{2/(p-1)}}
\end{align*}
where in the last inequality we have appealed to the $\eps$-Young inequality and the essential fact that $2< p+1$.

{\bf Estimate for the $T_5$ term.\ }
The first step is to show that
\begin{align}
|T_5(x)| &\leq C B \left( |\Lambda f'(x)| + \frac{1}{\bar \eps} |f''(x)| + \bar \eps^{(p+2)/p} B^{2/p} \frac{(\DD_p[f''](x))^{2/p}}{|f''(x)|^{(p+2)/p}}\right)
\label{eq:T:5:p:p}
\end{align}
holds when $p \in (1,2)$.
For this purpose, similarly to \eqref{eq:T:5:pv}, \eqref{eq:T:5:out}, and \eqref{eq:T:5:in}, by using \eqref{eq:R1:p}--\eqref{eq:R2:p} we arrive at
\begin{align*}
|T_{5,pv}| &\leq C B |\Lambda f'(x)|\\
|T_{5,in}| & \leq  C \bar \eps B |f''(x)| + C \bar \eps^{(p+2)/p} B^{(p+2)/p} \frac{(\DD^p[f''](x))^{2/p}}{|f''(x)|^{(p+2)/p}}\\
|T_{5,out}| &\leq  \frac{C B}{\bar \eps} |f''(x)|.
\end{align*}
Combining the above estimates proves \eqref{eq:T:5:p:p}. We now use \eqref{eq:T:5:p:p} in which we choose $\bar \eps$ precisely as in \eqref{eq:bar:eps:choice:p} to obtain the desired estimate
\begin{align*}
|T_5(x)|
&\leq C B \left( |\Lambda f'(x)| + |f''(x)| + B^{1/(p+1)} (\DD^p[f''](x))^{1/(p+1)}\right)\notag\\
&\leq C B \left(|\Lambda f'(x)| + \frac{1}{\eps^p} |f''(x)| + \frac{\eps B \DD^p[f''](x)}{|f''(x)|^p}\right)
\end{align*}
In the last inequality we have appealed to the $\eps$-Young inequality and the fact that $1< p+1$.
\end{proof}

\section{Proof of Theorem \ref{thm:cond}, part (i), Blowup Criterium for the Curvature}
\label{sec:proof:cond}

We evaluate \eqref{eq:f'':2} at a point $\bar x = \bar x(t)$ where the second derivative achieves its maximum, i.e. such that
\[
|f''(\bar x,t)| = \|f''(t)\|_{L^\infty} = M_\infty(t).
\]
At this point $\bar x$ we have
\[
\partial_x |f''(\bar x,t)|^2 = 0 \qquad \mbox{and} \qquad \LL_f [f''](\bar x,t) \geq 0.
\]
By Rademacher's theorem whose use is justified by the assumptions, we may then show that
\begin{align}
\frac{d}{dt} M_\infty(t)^2 = \frac{d}{dt} \| f''(t)\|_{L^\infty}^2
&= \partial_t |f''(\bar x,t)|^2  \notag\\
& \leq (\partial_t + v(\bar x,t) \partial_x + \LL_f) |f''(\bar x,t)|^2 \notag\\
&= 2 f''(\bar x,t) (T_1 + T_2 + T_3 + T_4) - \DD_f [f''](\bar x,t)
\label{eq:f'':3}
\end{align}
for almost every $t \in [0,T]$.

In order to complete the proof, we estimate the right hand side of \eqref{eq:f'':3}.
The dissipative term $\DD_f [f''](x)$ is non-negative, and is bounded from below explicitly  as in~\eqref{eq:Df:DD}, \eqref{eq:lower:bound}, and \eqref{eq:optimal:lower:bound}.
In order to estimate the $T_1,\ldots,T_4$ terms on the right side of \eqref{eq:f'':1}, we appeal to the upper bound \eqref{eq:T:1:2:3:4} obtained in Lemma~\ref{lem:T:1:2:3:4:5}. We thus obtain
\begin{align}\label{eq:M}
&\frac{d}{dt} M_\infty^2  + \frac{1}{2(1+B^2)}  \DD[f''](\bar x) + \frac 12 L_B(M_\infty)\leq  \frac{C_0 B}{\eps^2} M_\infty^3 + C_0 \eps B^2 \DD[f''](\bar x)
\end{align}
for some universal constant $C_0 \geq 1$.
We now choose the value of $\eps$ in Lemma~\ref{lem:T:1:2:3:4:5} as
\[
\eps = \min \left\{ \frac{1}{2 C_0 B^2(1+B^2)} , 1 \right\}
\]
in order to obtain
\[
\frac{d}{dt} M_\infty^2  + \frac 12 L_B(M_\infty) \leq K_{B}  M_\infty^3
\]
where
\[
K_{B} =  C_0  B \max \{1, 4 C_0^2 B^4(1+B^2)^2\}
\]
is a constant that depends solely on $B$.
To close the argument, we recall cf.~\eqref{eq:proof:better} that
\[
\lim_{M\to \infty} \frac{L_B(M)}{M^3} = \infty
\]
and that $L_B$ is continuous, which shows that there exists $M_\infty^* = M_\infty^*(B,C_0) $ such that
\[
\frac 12 L_B(M) \geq K_{B} M^3 \qquad \mbox{for any} \qquad M\geq M_\infty^*.
\]
Therefore, we obtain
\[
M_\infty(t) \leq \max\{ M_\infty(0) , M_\infty^* \}
\]
for all $t \in [0,T]$, which concludes the proof of the theorem.

\section{Proof of Theorem \ref{thm:local}, Local Existence}
\label{sec:local}
In view of the available  $L^p$ maximum principle for $f$, cf.~\eqref{eq:max:princ:Lp},  the proof consists of coupling the evolution of the maximal slope
\[
B(t) = \|f'(t)\|_{L^\infty}
\]
with that of the $L^p$ norm of $f''$
\[
M_p(t) = \|f''(t)\|_{L^p}.
\]
Our goal is to obtain an a priori estimate of the type
\[
\frac{d}{dt}(B^2 + M_p^2) \leq {\rm polynomial} (B^2 + M_p^2)
\]
from which the existence of solutions on a time interval that only depends on $B(0)^2 + M_p(0)^2$ follows by a standard approximation procedure.

For the evolution of $M_p(t)$ we split the proof in three cases:
\begin{enumerate}
\item $p= \infty$
\item $p \in [2,\infty)$
\item $p \in (1,2)$.
\end{enumerate}
We however note that in all of these three cases, but the $1D$ Sobolev embedding, as soon as $f''\in L^p$, we have $f' \in {C}^{(p-1)/p}$, and we have the bound
\[
[f']_{C^{(p-1)/p}} \leq \|f''\|_{L^p}
\]
for some positive constant $C$ that may depend on $p$. In particular, it follows from \eqref{eq:f':MOC:easy:bound} that the bound
\begin{align}
|\RSZ_1[f'](x,\alpha)| = |\Delta_\alpha f(x) - f'(x)| \leq |\alpha|^{(p-1)/p} \|f''\|_{L^p}
\label{eq:Lp:MOC}
\end{align}
for all $x,\alpha \in \RR$, and all $p \in (1,\infty]$. In view of this estimate, we immediately notice that the uniqueness of solutions in $W^{2,p}$  follows directly from Theorem~\ref{thm:uniqueness}, whose proof is given below.

\subsection{Evolution of the maximal slope $B(t)$}
We first multiply equation \eqref{eq:f'}  by $f'(x,t)$ and arrive at  the  equation
\begin{align}
(\partial_t + v \partial_x + \LL_f) |f'|^2 + \DD_f[f'] = T_6
\label{eq:f':square}
\end{align}
pointwise in  $(x,t)$,
where we have denoted
\[
T_6(x,t)=4 f'(x,t) \int_{\RR} \frac{ (\RSZ_1[f'](x,\alpha)) (\Delta_\alpha f(x,t)) }{(\Delta_\alpha f(x,t))^2 +1 } \frac{\delta_\alpha f'(x,t)}{(\delta_\alpha f(x))^2 + \alpha^2}d\alpha.
\]
Let $\bar x = \bar x(t)$ be a point such that $|f'(\bar x(t),t)| = B(t)$.
Using equation \eqref{eq:f':square} and Rademacher's theorem
we find that
\begin{align}
\frac{d}{dt}B^2(t)= \partial_t |f'(\bar x,t)|^2 =  T_6(\overline{x},t) - \left( \DD_f[f'](\bar x,t) + \LL_f|f'(\bar x,t)|^2\right)
\label{eq:B:evo:1}
\end{align}
for almost every $t$. Here we used that at a point of global maximum $\partial_x |f'|^2 = 0$. Note that also that at $\bar x$ we have $\LL_f |f'|^2 \geq 0$.
It remains to bound $T_6(\bar x,t)$. For this purpose we decompose $T_6$ as follows
\begin{align*}
T_6(x)&=
2 \int_{|\alpha|\leq \eps} \frac{ (\RSZ_1[f'](x,\alpha)) (\Delta_\alpha f(x,t)) }{(\Delta_\alpha f(x,t))^2 +1 } \frac{(\delta_\alpha f'(x,t))^2}{(\delta_\alpha f(x))^2 + \alpha^2}d\alpha \notag\\
&\quad +2 \int_{|\alpha|\leq \eps} \frac{ (\RSZ_1[f'](x,\alpha)) (\Delta_\alpha f(x,t)) }{(\Delta_\alpha f(x,t))^2 +1 } \frac{\delta_\alpha (f'(x,t))^2}{(\delta_\alpha f(x))^2 + \alpha^2}d\alpha\notag\\
&\quad + 4 f'(x,t) \int_{|\alpha|\geq \eps} \frac{ (\RSZ_1[f'](x,\alpha)) (\Delta_\alpha f(x,t)) }{((\Delta_\alpha f(x,t))^2 +1)^2} \frac{\delta_\alpha f'(x,t)}{\alpha^2}d\alpha\notag\\
&= T_{6,1,in}(x)+ T_{6,2,in}(x) + T_{6,out}(x)
\end{align*}
where $\eps = \eps(x)>0$ is to be determined, and we ignore the $t$-dependence of all factors.
Using \eqref{eq:Lp:MOC} and the fact that $\LL_f |f'(\bar x)|^2 \geq 0$ we may estimate
\begin{align*}
|T_{6,1,in}(\bar x)| &\leq \eps^{(p-1)/p} \|f''\|_{L^p} \DD_f[f'](\bar x)\\
|T_{6,2,in}(\bar x)| &\leq \eps^{(p-1)/p} \|f''\|_{L^p} \LL_f |f'(\bar x)|^2,
\end{align*}
while the bound \eqref{eq:f':bnd} yields
\begin{align*}
|T_{6,out}(\bar x)| \leq \frac{8 B^3}{\eps}.
\end{align*}
Letting
\[
\eps = \left( \frac{1}{2\|f''\|_{L^p}}\right)^{p/(p-1)}
\]
in the above three estimates, we arrive at
\[
|T_6(\bar x)| \leq \frac 12 \left( \DD_f[f'](\bar x) + \LL_f |f'(\bar x)|^2 \right) + C B^3 \|f''\|_{L^p}^{p/(p-1)}
\]
which combined with \eqref{eq:B:evo:1} gives
\begin{align}
\frac{d}{dt} B^2 \leq C B^3 \|f''\|_{L^p}^{p/(p-1)}
\label{eq:B:evo:final}
\end{align}
for some positive constant $C$ that may depend on $p \in (1,\infty]$.

\subsection{Case (i), $p=\infty$}

We recall the evolution of $M_\infty(t)$, cf.~\eqref{eq:M}, in which we take
\[
\eps = \eps(t) = \min \left\{ \frac{1}{2 C_0 B(t)^2 (1+B(t)^2)},1 \right\}
\]
to arrive at the a priori estimate
\begin{align}
&\frac{d}{dt} M_\infty^2  \leq   C_0 B \max \left\{2 C_0 B^2 (1+B^2),1 \right\} M_\infty^3
\label{eq:M:infty:evo}.
\end{align}
Combining \eqref{eq:B:evo:final} with \eqref{eq:M:infty:evo} we obtain that
\begin{align}
\frac{d}{dt} (B(t)^2 + M_\infty(t)^2)
&\leq C B(t)^3 M_\infty(t) + C B(t) (1+B(t)^2)^2 M_\infty(t)^3 \notag\\
&\leq C (1+ B(t)^2 + M_\infty(t)^2)^{4}
\label{eq:B:M:infty:evo}
\end{align}
for some positive constant $C$. Integrating \eqref{eq:B:M:infty:evo}, we obtain that there exists $T= T(\|f'_0\|_{L^\infty},\|f''_0\|_{L^\infty}) > 0$ on which the solution may be shown to exist and have finite $W^{2,\infty}$ norm.

\subsection{Case (ii), $2 \leq p < \infty$}
We consider the evolution of $|f''|^p$ in divergence form, cf.~\eqref{eq:f'':p:geq:2:div},  apply the upper bound given by Lemma~\ref{lem:T:1:2:3:4:5} for the terms $T_1,\ldots, T_5$ on the right side of \eqref{eq:f'':p:geq:2:div}, and the lower bound given in Lemma~\ref{lem:lower:fpp:Lp}, estimate \eqref{eq:lower:fpp:Lp}, and the bound given by \eqref{eq:Df:DD} for the dissipative term $\DD_f[f'']$ on the left side of \eqref{eq:f'':p:geq:2:div}, to arrive at
\begin{align}
&\left( \partial_t + \LL_f \right) |f''(x,t)|^p + \partial_x( v(x,t) |f''(x,t)|^p) + \frac{|f''(x,t)|^{p-2}}{2(1+B(t)^2)} \DD[f''](x,t) +  \frac{|f''(x,t)|^{2p}}{C_1 (1+B(t)^2) \|f''(t)\|_{L^p}^p} \notag\\
&\qquad \leq C_1 B(t) |f''(x,t)|^p |H f''(x,t)| + \frac{C_1 B(t)}{\eps(t)^2} |f''(x,t)|^{p+1} + C_1 \eps B(t)^2 |f''(x,t)|^{p-2} \DD[f''](x,t)
\label{eq:W:2:p:evo:1}
\end{align}
pointwise in $x$ and $t$. Choosing
\[
\eps(t) = \min\left\{ \frac{1}{4 C_1 B(t)^2 (1+ B(t)^2)}, 1 \right\}
\]
we conclude from \eqref{eq:W:2:p:evo:1} that
\begin{align}
&\left( \partial_t + \LL_f \right) |f''(x,t)|^p + \partial_x( v(x,t) |f''(x,t)|^p)  + \frac{|f''(x,t)|^{p-2}}{4(1+B(t)^2)} \DD[f''](x,t) + \frac{|f''(x,t)|^{2p}}{C_1 (1+B(t)^2) \|f''(t)\|_{L^p}^p} \notag\\
&\qquad \leq C_1 B(t) |f''(x,t)|^p |H f''(x,t)| + C B(t)^5 (1+B(t)^2) |f''(x,t)|^{p+1}
\label{eq:W:2:p:evo:2}
\end{align}
where $C = C(C_1)>0$ is a constant.

At this stage we integrate \eqref{eq:W:2:p:evo:2} for $x\in \RR$. First we note that
\begin{align}
\int_{\RR} \LL_f[|f''|^p](x) dx = P.V. \int_{\RR} \int_{\RR}  \frac{|f''(x)|^p - |f''(x-\alpha)|^p}{ (f(x) - f(x-\alpha))^2 + \alpha^2} d\alpha dx= 0.
\label{eq:LL:integrated:0}
\end{align}
This fact may be seen by changing variables $x \to x-\alpha$. We thus obtain an a priori estimate for the evolution of $M_p(t) = \|f''(t)\|_{L^p}$ as
\begin{align}
\frac{d}{dt} M_p(t)^p +  \frac{M_{2p}(t)^{2p}}{C_1 (1+B(t)^2) M_p(t)^p} \leq C B(t) (1+B(t)^2)^3 M_{p+1}(t)^{p+1}
\label{eq:W:2:p:evo:3}
\end{align}
by also using that the Hilbert transform is bounded on $L^p$. Furthermore, since for $p>1$ we have $p+1 < 2p$, we may interpolate
\begin{align}
M_{p+1}(t) \leq M_{p}(t)^{(p-1)/(p+1)} M_{2p}(t)^{2/(p+1)}
\label{eq:Lp:interpolation}
\end{align}
which combined with  \eqref{eq:W:2:p:evo:3} and the $\eps$-Young inequality, yields
\begin{align}
\frac{d}{dt} M_p(t)^p &+  \frac{M_{2p}(t)^{2p}}{C_1 (1+B(t)^2) M_p(t)^p} \notag\\
&\leq C B(t) (1+B(t)^2)^3 M_{p}(t)^{p-1} M_{2p}(t)^{2}\notag\\
&\leq \frac{M_{2p}(t)^{2p}}{2 C_1 (1+B(t)^2) M_p(t)^p} + C B(t)^{p/(p-1)} (1+B(t)^2)^{(3p+1)/(p-1)} M_p(t)^{p^2/(p-1)}
\label{eq:W:2:p:evo:4}
\end{align}
In conclusion, we obtain
\[
\frac{d}{dt} M_p(t)^2 \leq C B(t)^{p/(p-1)} (1+B(t)^2)^{(3p+1)/(p-1)} \left(M_p(t)^{2}\right)^{(3p-2)/(2p-2)}
\]
which combined with \eqref{eq:B:evo:final} gives
\begin{align}
\frac{d}{dt} \left(B(t)^2 + M_p(t)^2\right) \leq C (1 + B(t)^2 + M_p(t)^2)^{5p/(p-1)}
\label{eq:W:2:p:evo:5}
\end{align}
for some positive constant $C$. Integrating \eqref{eq:W:2:p:evo:5}, we obtain that there exists $T= T(\|f'_0\|_{L^\infty},\|f''_0\|_{L^p}) > 0$ on which the solution may be shown to exist and have finite $W^{2,p} \cap W^{1,\infty}$ norm.

\subsection{Case (iii), $1 < p < 2$}

We proceed similarly to the case $p\geq 2$, but instead of applying to \eqref{eq:f'':p:geq:2:div}, we use \eqref{eq:f'':p:leq:2:div}. For the dissipative term on the left side of \eqref{eq:f'':p:geq:2:div}, we use the minimum between the lower bounds provided by \eqref{eq:lower:fpp:Lp} and \eqref{eq:lower:fpp:Dp:Lp} in Lemma~\ref{lem:lower:fpp:Lp} and \eqref{eq:Df:DD}. For the nonlinear terms on the right side of \eqref{eq:f'':p:leq:2:div} we use the minimum between the upper bounds provided by Lemmas~\ref{lem:T:1:2:3:4:5} and~\ref{lem:T:1:2:3:4:5:p}. The resulting pointwise in $(x,t)$ inequality is
\begin{align}
&(\partial_t + \LL_f) |f''|^p  + \partial_x( v  |f''|^p )
+ \frac{1}{C_2(1+B^2)} \min\left\{ \frac{\DD[f'']}{|f''|^{2-p}} , \DD^p[f''] \right\} + \frac{|f''|^{2p}}{C_2(1+B^2) \|f''\|_{L^p}^p} \notag\\
&\quad \leq C_2  \eps B^2 \min\left\{\frac{\DD[f'']}{|f''|^{2-p}} , \DD^p[f'']\right\} + \frac{C_2 B}{\eps^{2/(p-1)}} |f''|^{p+1} + C_2 B |H f''|\, |f''|^p
\label{eq:W:2:p:evo:6}
\end{align}
for some constant $C_2>0$.  Choosing
\[
\eps(t) = \min\left\{ \frac{1}{2 C_2^2 B(t)^2 (1+ B(t)^2)}, 1\right\}
\]
we conclude from \eqref{eq:W:2:p:evo:6} that
\begin{align}
&(\partial_t + \LL_f) |f''|^p  + \partial_x( v  |f''|^p )
+ \frac{1}{2 C_2(1+B^2)} \min\left\{ \frac{\DD[f'']}{|f''|^{2-p}} , \DD^p[f''] \right\} + \frac{|f''|^{2p}}{C_2(1+B^2) \|f''\|_{L^p}^p} \notag\\
&\quad \leq C (1+B^2)^{(p+7)/(2p-2)}  (|f''|^{p+1} +  |H f''|\, |f''|^p).
\label{eq:W:2:p:evo:7}
\end{align}
Upon integrating \eqref{eq:W:2:p:evo:7} for $x\in \RR$, using the identity \eqref{eq:LL:integrated:0}, the boundedess of  $H$ on $L^p$, and the interpolation bound \eqref{eq:Lp:interpolation}, we thus arrive at
\begin{align}
\frac{d}{dt} M_p(t)^p + \frac{M_{2p}(t)^{2p}}{C_2 (1+B(t)^2) M_p(t)^p}
&\leq C (1+B(t)^2)^{(p+7)/(2p-2)} M_{p+1}(t)^{p+1} \notag\\
&\leq C (1+B(t)^2)^{(p+7)/(2p-2)} M_{p}(t)^{p-1} M_{2p}(t)^2.
\label{eq:W:2:p:evo:8}
\end{align}
Similarly to \eqref{eq:W:2:p:evo:4}--\eqref{eq:W:2:p:evo:5}, since $p+1>2$, and by using the $\eps$-Young inequality, we conclude from \eqref{eq:W:2:p:evo:8} that
\begin{align}
\frac{d}{dt} M_p(t)^2 \leq C (1+B(t)^2)^{(p^2+9p-2)/(2 (p-1)^2)} (M_p(t)^2)^{(3p-2)/(2p-2)}.
\label{eq:W:2:p:evo:9}
\end{align}
Combining with \eqref{eq:B:evo:final} we finally arrive at
\[
\frac{d}{dt} (B(t)^2 + M_p(t)^2) \leq C (1 + B(t)^2 + M_p(t)^2)^{2p(p+1)/(p-1)^2}.
\]
Integrating \eqref{eq:W:2:p:evo:5}, we obtain that there exists $T= T(\|f'_0\|_{L^\infty},\|f''_0\|_{L^p}) > 0$ on which the solution may be shown to exist and have finite $W^{2,p} \cap W^{1,\infty}$ norm.

%%%%%%%%%%%%%%%%%%%%%%%%%%%%%%%%%
%%%%%%%%%%%%%%%%%%%%%%%%%%%%%%%%%
%%%%%%%%%%%%%%%%%%%%%%%%%%%%%%%%%
%%%%%%%%%%%%%%%%%%%%%%%%%%%%%%%%%

\section{Proof of Theorem~\ref{thm:small}, Global Existence for Small Datum}
\label{sec:proof:small}

The proof follows closely the estimates in Section~\ref{sec:local}. The major difference is that
assumption \eqref{eq:small:Lip:cond} and the maximum principle for $f'$ established in~\cite[Section 5]{CordobaGancedo09} show that
 \begin{align}
\|f'(t)\|_{L^\infty} \leq B \leq \frac{1}{C_*}
\label{eq:Lip:small}
\end{align}
for all $t>0$.
Thus, we do not need to consider the evolution of $B(t)$, as we have
\[
B(t) \leq \frac{1}{C_*}
\]
for $t\in[0,T)$, where $T>0$ is the maximal existence time in $W^{2,p}$. For simplicity, we split the proof in three cases based on the value of $p\in(1,\infty]$:
\begin{enumerate}
\item $p= \infty$
\item $p \in [2,\infty)$
\item $p \in (1,2)$.
\end{enumerate}

\subsection{Case (i), $p=\infty$} We use the estimate \eqref{eq:M}, but here we apply lower bound \eqref{eq:lower:bound} instead of \eqref{eq:optimal:lower:bound:*}, and we set $\eps=1$. We arrive at
\[
\frac{d}{dt} M_\infty^2  + \frac{1}{2(1+B^2)}  \DD[f''](\bar x) + \frac{M_{\infty}^3}{48 B (1+B^2)} \leq  C_0 B M_\infty^3 + C_0 B^2 \DD[f''](\bar x)
\]
where $\bar x = \bar x(t)$ is a point at which $M_\infty = |f''(\bar x,t)|$.
For $B$ small enough, so that
\[
2 C_0 B^2 (1+B^2)\leq 1 \qquad \mbox{and} \qquad 100 C_0 B^2 (1+B^2) \leq 1
\]
holds, we thus  obtain
\[
\frac{d}{dt} M_\infty +\frac{1}{50B(1+B^2)}M_\infty^2 \leq 0.
\]
Integrating the above ODE we obtain that
\[
M_\infty(t) \leq  \frac{M_\infty(0)}{1 +  \frac{M_\infty(0)}{100 B} t}
\]
for all $t\geq 0$, which proves \eqref{eq:curvature:decay}.

\subsection{Case (ii), $2 \leq p < \infty$}
We use the first line of estimate \eqref{eq:W:2:p:evo:2}, but instead of using \eqref{eq:lower:fpp:Lp} to bound the dissipative term $\DD[f'']$ from below, we appeal to \eqref{eq:lower:bound}. We arrive at
\begin{align*}
&\left( \partial_t + \LL_f \right) |f''(x,t)|^p + \partial_x( v(x,t) |f''(x,t)|^p)  + \frac{|f''(x,t)|^{p+1}}{96 B(1+B^2)} + \frac{|f''(x,t)|^{p-2} \DD[f''](x,t)}{4 (1+B^2)}\notag\\
&\qquad \leq C_1 B |f''(x,t)|^p |H f''(x,t)| + C B^5 (1+B^2) |f''(x,t)|^{p+1}.
\end{align*}
Integrating the above over $x\in \RR$, similarly to \eqref{eq:W:2:p:evo:3} we obtain
\begin{align}
\frac{d}{dt} M_p(t)^p + \frac{M_{p+1}(t)^{p+1}}{96 B(1+B^2)} + \frac{1}{4 (1+B^2)} \int_{\RR} |f''(x,t)|^{p-2} \DD[f''](x,t) dx \leq C B (1+B^2)^3 M_{p+1}(t)^{p+1}
\label{eq:L2:new}
\end{align}
for some $C>0$. If $B$ is chosen small enough so that
\begin{align*}
200 C B^2 (1+B^2)^4 \leq 1
\end{align*}
we thus obtain
\begin{align*}
\frac{d}{dt} M_p(t)^p + \frac{M_{p+1}(t)^{p+1}}{200 B(1+B^2)}  + \frac{1}{4 (1+B^2)} \int_{\RR} |f''(x,t)|^{p-2} \DD[f''](x,t) dx \leq 0.
\end{align*}
Integrating the above in time and noting that
\begin{align*}
\int_{\RR} |f''(x,t)|^{p-2} \DD[f''](x,t) dx
&= \frac{1}{2} \int_{\RR}\int_{\RR} \frac{(|f''(x)|^{p-2} + |f''(x-\alpha)|^{p-2})(f''(x)-f''(x-\alpha))^2}{\alpha^2} d\alpha dx \notag\\
&\geq \frac{4}{p^2} \int_{\RR}\int_{\RR} \frac{(|f''(x)|^{p/2} - |f''(x-\alpha)|^{p/2})^2}{\alpha^2} d\alpha dx \notag\\
&= \frac{4}{p^2} \| |f''|^{p/2}\|_{\dot{H}^{1/2}}
\end{align*}
concludes the proof of \eqref{eq:mpfpp}.

\subsection{Case (iii), $1< p< 2$}
We use estimate \eqref{eq:W:2:p:evo:7} in which we bound from below the dissipative terms from below by appealing to Lemma~\ref{lem:lower:Lip}, and arrive at
\begin{align*}
&(\partial_t + \LL_f) |f''|^p  + \partial_x( v  |f''|^p )
 +  \frac{ |f''|^{p+1}}{200 B C_2(1+B^2)}+ \frac{|f''|^{2p}}{C_2(1+B^2) \|f''\|_{L^p}^p} \notag\\
&\quad \leq C (1+B^2)^{(p+7)/(2p-2)}  (|f''|^{p+1} +  |H f''|\, |f''|^p).
\end{align*}
Integrating the above over $x \in \RR$, and using that $H$ is bounded on $L^p$ in this range of $p$, we thus arrive at
\begin{align*}
\frac{d}{dt} M_p(t)^p +  \frac{M_{p+1}(t)^{p+1}}{200 B C_2(1+B^2)}+ \frac{M_{2p}(t)^{2p}}{C_2(1+B^2) \|f''\|_{L^p}^p} \leq C (1+B^2)^{(p+7)/(2p-2)} M_{p+1}(t)^{p+1}.
\end{align*}
Lastly, choosing $B$ small enough so that
\begin{align*}
400 C_2 C B (1+B^2)^{(3p+5)/(2p-2)} \leq 1,
\end{align*}
we arrive at
\begin{align*}
\frac{d}{dt} M_p(t)^p +  \frac{M_{p+1}(t)^{p+1}}{400 B C_2(1+B^2)} \leq 0
\end{align*}
which upon integration in time concludes the proof of \eqref{eq:mpfpp:<2} and thus of the theorem.

\section{Proof of Theorem \ref{thm:cond}, part (ii), Blowup Criterium for Smooth Solutions}

We shall study the evolution of the $\|f\|_{H^k}(t)$ norms for $k\geq 3$. We show that they can be controlled by $\sup_{[0,T]}\|f'(t)\|_{L^\infty}$ and $\sup_{[0,T]}\|f''(t)\|_{L^\infty}$. Then Theorem \ref{thm:cond} concludes the proof. In fact, the $H^k$ norm of a solution with $k>3$ can be controlled already by $H^3$-norm as shown in \cite{ConstantinCordobaGancedoRodriguezStrain13},  Section 5.2. Therefore we may assume $k=3$. We start by dealing with the evolution of $\|f''(t)\|_{L^2}^2$. We use inequality \eqref{eq:L2:new} with $\eps$ small enough and $p=2$, to obtain
\begin{align*}
\frac{d}{dt} \|f''\|_{L^2}^2 +\frac{\|f''\|_{\dot{H}^{1/2}}^2}{2C_0 (1+B^2)}
&\leq C(B)\|f''\|_{L^3}^3 \leq C(B)\|f''\|_{L^\infty} \|f''\|_{L^2}^2,
\end{align*}
and therefore
\begin{equation}\label{eq:boundL2pp}
\|f''(t)\|_{L^2}^2\leq \|f''_0\|_{L^2}^2\exp \left(C(B)\int_0^t\|f''(s)\|_{L^\infty}ds\right).
\end{equation}
We use equation \eqref{eq:muskat} to split
$$
\int_{\RR}f'''(x)f'''_t(x)dx=I_1+I_2+I_3+I_4,
$$
where
$$
I_1=\int_{\RR}f'''(x)\int_{\RR} (f''''(x)\alpha-\delta_\alpha f'''(x)) \left( \frac{1}{ (\delta_\alpha f(x))^2 + \alpha^2} \right)  d\alpha dx,
$$
$$
I_2=3\int_{\RR}f'''(x)\int_{\RR}(f'''(x)\alpha-\delta_\alpha f''(x))\partial_x \left( \frac{1}{ (\delta_\alpha f(x))^2 + \alpha^2} \right) d\alpha dx,
$$
$$
I_3=3\int_{\RR}f'''(x)\int_{\RR}(f''(x)\alpha-\delta_\alpha f'(x))\partial_x^2  \left( \frac{1}{ (\delta_\alpha f(x))^2 + \alpha^2} \right)  d\alpha dx,
$$
$$
I_4=\int_{\RR}f'''(x)\int_{\RR}(f'(x)\alpha-\delta_\alpha f(x))\partial_x^3 \left( \frac{1}{ (\delta_\alpha f(x))^2 + \alpha^2} \right)  d\alpha dx.
$$
In $I_1$ it is possible to decompose further and obtain
\[
I_1=\int_{\RR}f'''(x)f''''(x)P.V.\int_{\RR}\frac{\alpha}{(\delta_\alpha f(x))^2 + \alpha^2}d\alpha dx -\frac12\int_{\RR}\DD_f[f'''](x)dx:=I_{1,1}+I_{1,2}.
\]
We bound $I_{1,1}$ as
\begin{align*}
I_{1,1}=&\int_{\RR}|f'''(x)|^2P.V.\int_{\RR}\frac{\alpha\delta_\alpha f(x)\delta_\alpha f'(x)}{((\delta_\alpha f(x))^2 + \alpha^2)^2}d\alpha\\
=&\int_{\RR}|f'''(x)|^2P.V.\int_{\RR}\frac1\al\Big(\frac{\Delta_\alpha f(x)}{((\Delta_\alpha f(x))^2 +1)^2}-\frac{f'(x)}{((f'(x))^2+1)^2}\Big)\Delta_\alpha f'(x)d\alpha dx\\
&\quad+\int_{\RR}|f'''(x)|^2\frac{f'(x)\Lambda f'(x)}{((f'(x))^2+1)^2}dx:=I_{1,1,1}+I_{1,1,2}.
\end{align*}
In $I_{1,1,1}$ one finds extra cancelation in such a way that splitting in the regions $|\al|<r$ and $|\al|>r$ and optimizing in $r$, it is possible to obtain as before
$$
I_{1,1,1}\leq C(B)\|f''\|_{L^\infty}\|f'''\|_{L^2}^2.
$$
For $I_{1,1,2}$, the Gagliardo-Nirenberg interpolation inequality $\|g\|_{L^4}\leq C \|g\|_{L^2}^{1/2}\|g\|_{\dot{H}^{1/2}}^{1/2}$  yields
\begin{align*}
I_{1,1,2}&\leq \|f''\|_{L^2}\|f'''\|_{L^4}^2
\leq C \|f''\|_{L^2}\|f'''\|_{L^2}\|f'''\|_{\dot{H}^{1/2}}\\
&\leq C(B)\|f''\|^2_{L^2}\|f'''\|^2_{L^2}+ \frac{\|f'''\|_{\dot{^H}^{1/2}}^2}{32(1+B^2)}
\end{align*}
by using  the $\eps$-Young inequality. This yields
$$
I_{1,1}=I_{1,1,1}+I_{1,1,2}\leq C(B)(\|f''\|_{L^\infty}+\|f''\|^2_{L^2})\|f'''\|^2_{L^2}+\frac{\|f'''\|_{\dot{^H}^{1/2}}^2}{32(1+B^2)}.
$$
Using \eqref{eq:Df:DD}, \eqref{eq:H:0.5} together with \eqref{eq:lower:fppp} in $I_{1,2}$ we arrive at
$$
I_{1,2}\leq -\frac{1}{4(1+B^2)}\|f'''\|^2_{\dot{^H}^{1/2}}-\frac{1}{2^7(1+B^2)^3\|f''\|_{L^\infty}}\|f'''\|^3_{L^3}.
$$
Adding the last two estimates it is possible to obtain
\begin{equation}\label{eq:i1}
I_{1}=I_{1,1}+I_{1,2}\leq C(B)(\|f''\|_{L^\infty}+\|f''\|^2_{L^2})\|f'''\|^2_{L^2}-\frac{7\|f'''\|_{\dot{^H}^{1/2}}^2}{32(1+B^2)}
-\frac{\|f'''\|^3_{L^3}}{2^7(1+B^2)^3\|f''\|_{L^\infty}}.
\end{equation}
We are done with $I_{1}$. For $I_2$ we rewrite as
\begin{align*}
I_2=&6\int_{\RR}f'''(x)\int_{\RR}\frac{\Delta_\alpha f''(x)-f'''(x)}{\al}\frac{\Delta_\alpha f(x)}{(\Delta_\alpha f(x))^2 + 1)^2}\Delta_\al f'(x) d\alpha dx\\
=&6\int_{\RR}f'''(x)\int_{|\al|<\|f''\|_{L^\infty}^{-1}}\frac{\RSZ_1[f'''](x,\al)}{\al}\frac{\Delta_\alpha f(x)}{(\Delta_\alpha f(x))^2 + 1)^2}\Delta_\al f'(x) d\alpha dx\\
&+6\int_{\RR}f'''(x)\int_{|\al|>\|f''\|_{L^\infty}^{-1}}\frac{\RSZ_1[f'''](x,\al)}{\al}\frac{\Delta_\alpha f(x)}{(\Delta_\alpha f(x))^2 + 1)^2}\Delta_\al f'(x) d\alpha dx\\
:=&I_{2,in}+I_{2,out}.
\end{align*}
Inequality \eqref{eq:R1} allows us to get
\begin{align*}
I_{2,in}&\leq 6\|f''\|_{L^\infty}\int_{\RR}|f'''(x)|\DD[f](x)^{1/2}\int_{|\al|<\|f''\|_{L^\infty}^{-1}} \frac{d\al}{|\al|^{1/2}} dx \notag\\
&\leq C\|f''\|_{L^\infty}^{1/2}\int_{\RR}|f'''(x)|\DD[f](x)^{1/2}dx\\
&\leq C(B)\|f''\|_{L^\infty}\|f'''\|_{L^2}^2+\frac{\|f'''\|^2_{\dot{H}^{1/2}}}{32(1+B^2)},
\end{align*}
and
\begin{align*}
I_{2,out}
&\leq 12B\int_{\RR}|f'''(x)|\DD[f](x)^{1/2}\int_{|\al|>\|f''\|_{L^\infty}^{-1}} \frac{d\al}{|\al|^{3/2}}dx \notag\\
&\leq C(B)\|f''\|_{L^\infty}^{1/2}\int_{\RR}|f'''(x)|\DD[f](x)^{1/2}dx\\
&\leq C(B)\|f''\|_{L^\infty}\|f'''\|_{L^2}^2+\frac{\|f'''\|^2_{\dot{H}^{1/2}}}{32(1+B^2)}.
\end{align*}
These last to inequalities give the appropriate bound for $I_2$ such that adding \eqref{eq:i1} we obtain
\begin{equation}\label{eq:i1+i2}
I_{1}+I_2\leq C(B)(\|f''\|_{L^\infty}+\|f''\|^2_{L^2})\|f'''\|^2_{L^2}-\frac{5\|f'''\|_{\dot{^H}^{1/2}}^2}{32(1+B^2)}
-\frac{\|f'''\|^3_{L^3}}{2^7(1+B^2)^3\|f''\|_{L^\infty}}.
\end{equation}
It is possible to decompose further in $I_3$ as follows
\begin{align*}
I_3=&c_{3,1}\int_{\RR}f'''(x)\int_{\RR}(f''(x)\alpha-\delta_\alpha f'(x))\frac{(\delta_\alpha f'(x))^2}{((\delta_\alpha f(x))^2 + \alpha^2)^2}d\alpha dx\\
&+c_{3,2}\int_{\RR}f'''(x)\int_{\RR}(f''(x)\alpha-\delta_\alpha f'(x))\frac{(\delta_\alpha f(x))^2(\delta_\alpha f'(x))^2}{((\delta_\alpha f(x))^2 + \alpha^2)^3}d\alpha dx\\
&+c_{3,3}\int_{\RR}f'''(x)\int_{\RR}(f''(x)\alpha-\delta_\alpha f'(x))\frac{\delta_\alpha f(x)\delta_\alpha f''(x)}{((\delta_\alpha f(x))^2 + \alpha^2)^2}d\alpha dx\\
:=&I_{3,1}+I_{3,2}+I_{3,3}.
\end{align*}
The identity
\begin{equation}\label{eq:taylor3}
f''(x)\alpha-\delta_\alpha f'(x)=\al^2\int_0^1r f'''(x+(r-1)\al)dr,
\end{equation}
allows us to get
\begin{align*}
I_{3,1}\leq& C\|f''\|_{L^\infty}^2\int_0^1\!\int_{|\al|<B\|f''\|_{L^\infty}^{-1}} \int_{\RR} (|f'''(x)|^2+|f'''(x+(r-1)\al)|^2) dxd\alpha  dr\\
& +CB^2\int_0^1\!\int_{|\al|>B\|f''\|_{L^\infty}^{-1}}  \int_{\RR} (|f'''(x)|^2+|f'''(x+(r-1)\al)|^2) dx \frac{d\alpha}{|\al|^{2}} dr\\
\leq& C(B)\|f''\|_{L^\infty}\|f'''\|^2_{L^2}.
\end{align*}
An analogous approach for $I_{3,2}$ gives
\begin{align*}
I_{3,2}\leq&  C(B)\|f''\|_{L^\infty}\|f'''\|^2_{L^2}.
\end{align*}
For the $I_{3,3}$ term, we decompose further:
\begin{align*}
I_{3,3}=&c_{3,3}\int_{\RR}f'''(x)\int_{|\al|<\zeta}\frac{f''(x)-\Delta_\alpha f'(x)}{\al}\frac{\Delta_\alpha f(x)}{(\Delta_\alpha f(x))^2 + 1)^2}\Delta_\alpha f''(x)d\alpha dx\\
&+c_{3,3}\int_{\RR}f'''(x)\int_{|\al|>\zeta}\frac{1}{\al^2}(f''(x)-\Delta_\alpha f'(x))\frac{\Delta_\alpha f(x)}{((\Delta_\alpha f(x))^2 + 1)^2}\delta_\alpha f''(x)d\alpha dx\\
:=&I_{3,3,in}+I_{3,3,out}.
\end{align*}
For the outer term, inequality
\begin{equation}\label{eq:interL4}
\|f''\|_{L^4}^2\leq C\|f'\|_{L^\infty}\|f'''\|_{L^2}
\end{equation} yields
$$
I_{3,3,out}\leq \frac{C}{\zeta}\|f''\|_{L^4}^2\|f'''\|_{L^2}\leq \frac{CB }{\zeta}\|f'''\|_{L^2}^2 .
$$
Identity \eqref{eq:taylor3} together with
\begin{equation}\label{eq:taylor3o1}
\Delta_\al f''(x)=\int_0^1f'''(x+(s-1)\al)ds,
\end{equation} allow us to obtain
for the inner term
\begin{align*}
I_{3,3,out}
&\leq c_{3,3}\int_0^1\!\!\int_0^1\!\!\int_{|\al|<\zeta}\!\int_{\RR}|f'''(x)||f'''(x+(r-1)\al)||f'''(x+(s-1)\al)|dx d\al ds dr \notag\\
&\leq 2c_{3,3}\zeta \|f'''\|_{L^3}^3.
\end{align*}
We take
\[
\zeta=\frac{1}{2^8(1+B^2)^3c_{3,3}\|f''\|_{L^\infty}}
\]
to find
$$
I_3=I_{3,1}+I_{3,2}+I_{3,3}\leq C(B)\|f''\|_{L^\infty}\|f'''\|_{L^2}^2+\frac{\|f'''\|_{L^3}^3}{2^7(1+B^2)^3\|f''\|_{L^\infty}}.
$$
After comparing $I_3$ with $I_1+I_2$ in \eqref{eq:i1+i2} we obtain
\begin{equation}\label{eq:i1+i2+i3}
I_{1}+I_2+I_3\leq C(B)(\|f''\|_{L^\infty}+\|f''\|^2_{L^2})\|f'''\|^2_{L^2}- \frac{5 \|f'''\|_{\dot{^H}^{1/2}}^2}{32(1+B^2)}.
\end{equation}
For the last term, we decompose using Leibniz rule to find
$$
I_4=I_{4,1}+I_{4,2}+I_{4,3}+I_{4,4}+I_{4,5}
$$
where
\begin{align*}
I_{4,1}:=&c_{4,1}\int_{\RR}f'''(x)\int_{\RR}(f'(x)\alpha-\delta_\alpha f(x))\frac{\delta_\alpha f(x)(\delta_\alpha f'(x))^3}{((\delta_\alpha f(x))^2 + \alpha^2)^3}d\alpha dx,
\end{align*}
\begin{align*}
I_{4,2}:=&c_{4,2}\int_{\RR}f'''(x)\int_{\RR}(f'(x)\alpha-\delta_\alpha f(x))\frac{(\delta_\alpha f(x))^3(\delta_\alpha f'(x))^3}{((\delta_\alpha f(x))^2 + \alpha^2)^4}d\alpha dx,
\end{align*}\begin{align*}
I_{4,3}:=&c_{4,3}\int_{\RR}f'''(x)\int_{\RR}(f'(x)\alpha-\delta_\alpha f(x))\frac{\delta_\alpha f'(x)\delta_\alpha f''(x)}{((\delta_\alpha f(x))^2 + \alpha^2)^2}d\alpha dx,
\end{align*}\begin{align*}
I_{4,4}:=&c_{4,4}\int_{\RR}f'''(x)\int_{\RR}(f'(x)\alpha-\delta_\alpha f(x))\frac{(\delta_\al f(x))^2\delta_\alpha f'(x)\delta_\alpha f''(x)}{((\delta_\alpha f(x))^2 + \alpha^2)^3}d\alpha dx,
\end{align*}\begin{align*}
I_{4,5}:=&c_{4,5}\int_{\RR}f'''(x)\int_{\RR}(f'(x)\alpha-\delta_\alpha f(x))\frac{\delta_\alpha f(x)\delta_\alpha f'''(x)}{((\delta_\alpha f(x))^2 + \alpha^2)^2}d\alpha dx.
\end{align*}
In $I_{4,1}$ we bound as follows
\begin{align*}
I_{4,1}\leq& C\int_{|\al|<\|f''\|_{L^\infty}^{-1}}d\alpha \|f'''\|_{L^2}\|f''\|_{L^8}^4+C\|f'\|_{L^\infty}^2\int_{|\al|>\|f''\|_{L^\infty}^{-1}} \|f'''\|_{L^2}\|f''\|_{L^4}^2
 \frac{d\alpha}{|\al|^{2}} \\ \leq& C(B)\|f''\|_{L^\infty}\|f'''\|_{L^2}^2,
\end{align*}
where the last inequality is given using interpolation inequality $\|f''\|_{L^8}^4\leq \|f'\|_{L^\infty}\|f''\|_{L^\infty}^2\|f'''\|_{L^2}$ together with \eqref{eq:interL4}.
The same approach allows us to conclude for $I_{4,2}$ that
\begin{align*}
I_{4,2}\leq& C(B)\|f''\|_{L^\infty}\|f'''\|_{L^2}^2.
\end{align*}
Using identity \eqref{eq:taylor3o1} in $I_{4,3}$ we arrive at
\begin{align*}
I_{4,3}\leq& C\|f''\|_{L^\infty}^2 \|f'''\|_{L^2}^2 \int_0^1\! \int_{|\al|<\|f''\|_{L^\infty}^{-1}}d\alpha ds +C\|f'\|_{L^\infty}^2 \|f'''\|_{L^2}^2 \int_0^1\!\int_{|\al|>\|f''\|_{L^\infty}^{-1}} \frac{d\alpha}{|\al|^{2}} ds \\
\leq& C(B)\|f''\|_{L^\infty}\|f'''\|_{L^2}^2.
\end{align*}
The same procedure yields
\begin{align*}
I_{4,4}\leq C(B)\|f''\|_{L^\infty}\|f'''\|_{L^2}^2.
\end{align*}
Finally, in $I_{4,5}$ we use next splitting
\begin{align*}
I_{4,5}\leq& C\|f''\|_{L^\infty}\int_{\RR}|f'''(x)|\Big(\int_{|\al|<B\|f''\|_{L^\infty}^{-1}}d\al\Big)^{1/2}\Big(\int_{|\al|<B\|f''\|_{L^\infty}^{-1}}|\Delta_\alpha f'''(x)|^2d\alpha\Big)^{1/2} dx\\
&+C\|f'\|_{L^\infty}\int_{|\al|>B\|f''\|_{L^\infty}^{-1}} \int_{\RR}(|f'''(x)|^2+|f'''(x-\al)|^2) dx \frac{d\alpha}{|\al|^{2}}\\
\leq& C(B)\|f''\|_{L^\infty}^{1/2}\int_{\RR}|f'''(x)|(\DD(f''')(x))^{1/2}dx+C(B)\|f''\|_{L^\infty}\|f'''\|_{L^2}^2\\
\leq& C(B)\|f''\|_{L^\infty}\|f'''\|_{L^2}^2+ \frac{\|f'''\|_{\dot{H}^{1/2}}}{32(1+B^2)}.
\end{align*}
Above estimates allow us to conclude that
$$
I_{4}=\sum_{k=1}^5 I_{4,k}\leq C(B)\|f''\|_{L^\infty}\|f'''\|_{L^2}^2+ \frac{\|f'''\|_{\dot{H}^{1/2}}}{32(1+B^2)}.
$$
Adding to \eqref{eq:i1+i2+i3} we obtain that
$$
\frac{d}{dt}\|f'''\|_{L^2}^2+ \frac{\|f'''\|_{\dot{H}^{1/2}}}{8(1+B^2)}\leq C(B)(\|f''\|_{L^2}^2+\|f''\|_{L^\infty})\|f'''\|_{L^2}^2.
$$
The bound for $\|f''\|_{L^2}$ in \eqref{eq:boundL2pp}, the control of $\|f''\|_{L^\infty}$ and integration in time yield the desired result.

\section{Proof of Theorem \ref{thm:uniqueness}, Uniqueness}
We consider two Muskat solutions $f_1$ and $f_2$ satisfying the hypothesis of the theorem with the same initial data $f_0(x)$. From \eqref{eq:muskat} and a small computation we obtain the equation for the difference $g = f_1-f_2$,
\begin{align}
\partial_t g + v_1 \partial_x g + \LL_{f_1}[g] = T_7
\label{eq:g:eqn}
\end{align}
where $v_1(x,t)$ is as defined in \eqref{eq:v:def} in terms of $f_1(x,t)$, $\LL_{f_1}$ is defined as in \eqref{eq:LL:f:def}, and
\begin{align*}
T_7(x,t) = P.V. \int_{\RR} \frac{\delta_\alpha g(x,t)}{\alpha^2} \frac{\left( f_2'(x,t) - \Delta_\alpha f_2(x,t) \right) \left( \Delta_\alpha f_1(x,t) + \Delta_\alpha f_2(x,t) \right)}{(1 + (\Delta_\alpha f_1(x,t))^2)(1 + (\Delta_\alpha f_2(x,t))^2)} d\alpha.
\end{align*}
Let $B = \sup_{t\in [0,T],j=1,2} \| f'_j\|_\infty$. By assumption, $f_2'$ has a uniform modulus of continuity $\rho$ on $[0,T]$, and thus by \eqref{eq:f':MOC:easy:bound} we may find an $\eps = \eps(B,\rho) > 0$ such that
\begin{align}
| f_2'(x,t)- \Delta_\alpha f_2(x,t)| \leq \rho(|\alpha|) \leq \rho(\eps) \leq \frac{1}{2}
\label{eq:f2:MOC}
\end{align}
for all $|\alpha|\leq \eps$, and all $(x,t) \in \RR \times [0,T]$.
We fix this value of $\eps$ throughout the rest of the proof.
Denote
\begin{align*}
K_{1,2}(x,\alpha,t) = \frac{\left( f_2'(x,t) - \Delta_\alpha f_2(x,t) \right) \left( \Delta_\alpha f_1(x,t) + \Delta_\alpha f_2(x,t) \right)}{(1 + (\Delta_\alpha f_1(x,t))^2)(1 + (\Delta_\alpha f_2(x,t))^2)}.
\end{align*}
It follows from \eqref{eq:f2:MOC} that
\begin{align}
|K_{1,2}(x,\alpha,t)| \leq \frac{1}{2}
\label{eq:K12:in}
\end{align}
for all $|\alpha|\leq\eps$, while
the Lipschitz assumption on $f_1$ and $f_2$ directly yields
\begin{align}
|K_{1,2}(x,\alpha,t)| \leq 2B
\label{eq:K12:out}
\end{align}
for all $|\alpha|\geq \eps$, uniformly in $x$ and $t$.

Upon multiplying \eqref{eq:g:eqn} by $g(x,t)$, and recalling the definition \eqref{eq:DD:f:def}, we obtain
\begin{align}
& (\partial_t + v_1 \partial_x + \LL_{f_1}) |g|^2 + \DD_{f_1}[g] \notag\\
&\quad = P.V. \int_{|\alpha|\leq \eps} \frac{\delta_\alpha (g^2(x))}{\alpha^2} K_{1,2}(x,\alpha) d\alpha+ P.V. \int_{|\alpha|\leq \eps} \frac{(\delta_\alpha g(x))^2}{\alpha^2} K_{1,2}(x,\alpha) d\alpha \notag\\
&\quad \qquad + 2 g(x,t) P.V. \int_{|\alpha|\geq \eps} \frac{\delta_\alpha g(x)}{\alpha^2} K_{1,2}(x,\alpha) d\alpha \notag\\
&\quad =: T_{7,1,in} + T_{7,2,in} + T_{7,out}.
\label{eq:g:eqn:2}
\end{align}
First, we notice that in view of \eqref{eq:K12:in} we have
\[
|T_{7,2,in}| \leq \frac 12  \DD_{f_1}[g],
\]
while in view of \eqref{eq:K12:out} and the Cauchy-Schwartz inequality,
we have
\[
|T_{7,out}| \leq 4B |g| (\DD_{f_1}[g])^{1/2} \eps^{-1/2} \leq \frac 12 \DD_{f_1}[g] + 2 B^2 \eps^{-1} |g|^2.
\]
The above two inequalities combined with \eqref{eq:g:eqn:2} yield that
\begin{align}
& (\partial_t + v_1 \partial_x + \LL_{f_1}) |g|^2 \leq 2 B^2 \eps^{-1} |g|^2 + T_{7,1,in}.
\label{eq:g:eqn:3}
\end{align}
To conclude, we note that the decay assumptions of the theorem guarantee that there exists a point, denoted by $\overline{x}=\overline{x}(t)$, where $|g(\overline{x},t)|=\|g(t)\|_{L^\infty}$. At this point of global maximum we have that $\partial_x |g|^2  =0$, and $\LL_{f_1}|g|^2 \geq 0$. Moreover,
\[
|T_{7,1,in}(\bar x(t),t)| \leq \frac 12 (\LL_{f_1}|g|^2) (\bar x(t),t)
\]
and thus from \eqref{eq:g:eqn:3} we obtain that
\begin{align}
(\partial_t |g|^2)(\bar x(t),t) \leq 2B^2 \eps^{-1} \|g(t)\|_{L^\infty}^2.
\label{eq:g:eqn:4}
\end{align}
The pointwise differentiability assumptions further warrant the use of the classical Rademacher theorem (see Appendix) which implies that
\begin{align}
\frac{d}{dt}\|g(t)\|^2_{L^\infty}= (\partial_t |g|^2)(\overline{x}(t),t)
\label{eq:g:eqn:5}
\end{align}
for almost every $t$, where $\bar x = \bar x(t)$ is as above.
From \eqref{eq:g:eqn:4}, \eqref{eq:g:eqn:5}, and the Gr\"onwall inequality it follows that
\[
\|g(t)\|_{L^\infty} \leq \|g(0)\|_{L^\infty} \exp\left( 2 B^2 \eps^{-1}  t \right)
\]
which concludes the proof of the theorem since $g(0) = f_1(0) - f_2(0) =0$.

\appendix

\section{Rademacher Theorem}
Let us recall the classical Rademacher theorem for the convenience of the reader. Suppose $f(x,t) \colon \RR \times \RR_+ \to \RR$ is a Lipschitz in time function uniformly in $x$, i.e. $\exists L>0$ such that for all $t,s,x$ we have
\[
|f(x,t) - f(x,s) | \leq L|t-s|.
\]
Suppose that at any time $t$ there is a point $x(t)$ such that $f(x(t),t)  = M(t) = \sup_x f(x,t)$. Then $M(t)$ is a Lipschitz function with the same constant $L$. Indeed, let $t,s\in \RR_+$ and wlog $M(t)>M(s)$. Then
\[
M(t) - M(s) = f(x(t),t) - f(x(t),s)+\underbrace{f(x(t),s) - f(x(s),s)}_{\leq 0} \leq f(x(t),t) - f(x(t),s) \leq L|t-s|.
\]
Then by the classical Rademacher Theorem in 1D, $M$ is absolutely continuous on any finite interval, i.e.
\[
M(t) - M(s) = \int_s^t m(\tau) d\tau,
\]
where $\|m\|_\infty \leq L$, and hence $M'=m$ a.e. To show that $M'(t) = \partial_t f(x(t),t)$ at the points where $M'$ exists we need extra assumption: for every $x$, $f(x,\cdot)$ is differentiable everywhere in $t$.  Then
\[
\begin{split}
M'(t) &= \lim_{h\to 0+} \frac{f(x(t+h),t+h) - f(x(t), t+h)+f(x(t), t+h)- f(x(t),t)}{h} \\
&\geq \lim_{h\to 0+}\frac{f(x(t), t+h)- f(x(t),t)}{h} = \partial_t f(x(t),t).
\end{split}
\]
Taking $h<0$ proves the opposite inequality.

%%%%%%%%%%%%%%%%%%%%%%%%%%%%%%%%%%%
\bigskip
\begin{center}
{\bf Acknowledgements}
\end{center}
The work of P.C. is supported in part by the NSF grants DMS-1209394 and DMS-1265132. The work of F.G. is partially supported by the  MTM2014-59488-P grant (Spain), the Ram\'on y Cajal program RyC-2010-07094 and by the P12-FQM-2466 grant from Junta de Andaluc\'ia, Spain.
The work of R.S. is partially supported by NSF grants DMS-1210896 and DMS-1515705. He thanks the Princeton University and University of Seville for warm hospitality during preparation of this work. The work of V.V. is supported in part by the NSF grants DMS-1348193 and DMS-1514771, and an Alfred P. Sloan Research Fellowship. He thanks the University of Seville for warm hospitality during preparation of this work.

%%%%%%%%%%%%%%%%%%%%%%%%%%%%%%%%%%%
%\bibliographystyle{alpha}
%\bibliography{VladBib}

\newcommand{\etalchar}[1]{$^{#1}$}

%%%%%%%%%%%%%%%%%%%%%%%%%%%%%%%%%%%

\end{document}